\let\wfs@comment@comment\comment
\let\comment\@undefined
\let\wfs@changes@comment\comment
\let\comment\@undefined
\newcommand\comment{%
    \ifthenelse{\equal{\@currenvir}{comment}}
    {\wfs@comment@comment}
    {\wfs@changes@comment}%
}
\newtheorem{theorem}{Theorem}[section]
\newtheorem{proposition}[theorem]{Proposition}
\newtheorem{lemma}[theorem]{Lemma}
\newtheorem{lem}[theorem]{Lemma}
\theoremstyle{definition}
\newtheorem{rem}[theorem]{Remark}
\def\cC{\mathcal C}
\def\cD{\mathcal D}
\def\cE{\mathcal E}
\def\cF{\mathcal F}
\def\cG{\mathcal G}
\def\cH{\mathcal H}
\def\cK{\mathcal K}
\def\cL{\mathcal L}
\def\cU{\mathcal U}
\def\cX{\mathcal X}
\def\cY{\mathcal Y}
\def\PG{{\rm{PG}}}
\def\GL{{\rm{GL}}}
\def\deg{\mbox{\rm deg}}
\def\det{\mbox{\rm det}}
\def\fq{{\mathbb F}_q}
\def\fqq{{\mathbb F}_{q^2}}
\newcommand{\PSL}{\mbox{\rm PSL}}
\newcommand{\PGL}{\mbox{\rm PGL}}
\newcommand{\AGL}{\mbox{\rm AGL}}
\newcommand{\PSU}{\mbox{\rm PSU}}
\newcommand{\PGU}{\mbox{\rm PGU}}
\newcommand{\AG}{\mbox{\rm AG}}
\newcommand{\aut}{\mbox{\rm Aut}}
\newcommand{\ha}{{\textstyle\frac{1}{2}}}
\begin{document}
\title{Plane curves with a large linear automorphism group in characteristic $p$}
\thanks{This research was performed within the activities of  GNSAGA - Gruppo Nazionale per le Strutture Algebriche, Geometriche e le loro Applicazioni of Italian INdAM.
 The third author  was partially supported by FAPESP-Brazil, grant 2017/18776-6.}
 \thanks{Herivelto Borges is with the Instituto de Ci\^encias Matem\'aticas e de Computa\c{c}\~ao, Universidade de S\~ao Paulo, S\~ao Carlos, SP 13560-970, Brazil.\\ e-mail: hborges@icmc.usp.br}
 \thanks{ G\'abor Korchm\'aros is with the Dipartimento di Matematica, Informatica ed Economia, Universit\`a  degli Studi della Basilicata,   Potenza (PZ)  85100, Italy. \\ e-mail: gabor.korchmaros@unibas.it}
 \thanks{Pietro Speziali is with the Instituto de Matem\'atica, Estat\'istica e Computa\c{c}\~ao Cient\'ifica, Universidade Estadual de Campinas, Campinas, SP 13083-859, Brazil.\\ e-mail: speziali@unicamp.br}

 \thanks{{\bf Keywords}: Algebraic curves; Automorphism groups}

 \thanks{{\bf Mathematics Subject Classification (2010)}: 114H37, 14H05}

 \author{Herivelto Borges}

 \author{G\'abor Korchm\'aros}

 \author{Pietro Speziali}

\begin{abstract}  Let $G$ be a subgroup of the three dimensional projective group $\PGL(3,q)$ defined over a finite field $\mathbb{F}_q$ of order $q$, viewed as a subgroup of $\PGL(3,K)$ where $K$ is an algebraic closure of $\mathbb{F}_q$. For the seven nonsporadic, maximal subgroups $G$ of $\PGL(3,q)$, we investigate the (projective, irreducible) plane curves defined over $K$ that are left invariant by $G$. For each, we compute the minimum degree $d(G)$ of $G$-invariant curves, provide a classification of all $G$-invariant curves of degree $d(G)$, and determine the first gap $\varepsilon(G)$ in the spectrum of the degrees of all $G$-invariant curves. We show that the curves of degree $d(G)$ belong to a pencil depending on $G$, unless they are uniquely determined by $G$. We also point out that $G$-invariant curves of degree $d(G)$  have particular geometric features such as Frobenius nonclassicality and an unusual variation of the number of $\mathbb{F}_{q^i}$-rational points. For most examples of plane curves left invariant by a large subgroup of $\PGL(3,q)$, the whole automorphism group of the curve is linear, i.e., a subgroup of $\PGL(3,K)$. Although this appears to be a general behavior, we show that the opposite case can also occur for some irreducible plane curves, that is,  the curve has a  large  group of  linear automorphisms, but  its  full automorphism group  is  nonlinear.
\end{abstract}
\maketitle
\section{Introduction}
Let $K$ be an algebraically closed field. A classical result on automorphisms of (projective, irreducible) plane curves defined over $K$ is that,  if the curve is  nonsingular and of degree at least four, then its automorphism group is linear, that is, a subgroup of the three-dimensional projective general linear group $\PGL(3,K)$. By a result of B. Segre, this theorem holds independently of  the characteristic of the field $K$,  see\cite[Theorem 11.29]{hirschfeld-korchmaros-torres2008}. The hypothesis of nonsingularity cannot be dropped, nevertheless, singular curves may have many linear automorphisms. If this occurs,  the curve often happens to have useful applications to finite geometry and coding theory.

In this paper, we work over an algebraic closure $K$ of a prime field of characteristic $p>0$. For a given finite subgroup $G$ of $\PGL(3,K)$, let $d(G)$ denote the smallest integer that is the degree of a $G$-invariant irreducible plane curve other than a line. Clearly, $d(G)$
does not change if we replace $G$ with another group from  the conjugacy class of $G$ in $\PGL(3,K)$. We raise  the following three questions for subgroups of $\PGL(3,K)$.
\begin{enumerate}
\item[(i)] find $d(G)$ for groups $G$ of larger order compared to $d$;
\item[(ii)] find a positive integer $\varepsilon(G)$ depending on $q$ such that no $G$-invariant irreducible plane curve of degree $d(G)+\varepsilon(G)$ exists.
\item[(iii)] determine all $G$-invariant irreducible plane curves of degree  $d(G)$.
\end{enumerate}
Note  that the  finiteness of $G$ implies $G\le \PGL(3,\fq)=\PGL(3,q)$ for some power $q$ of $p$. We choose the smallest such $q$ and address the above three questions for maximal, nonsporadic, subgroups of $\PGL(3,q)$ for a given $q$. We provide a complete solution for each. A summary of the results is provided in Section \ref{rr}, where some consequences regarding  the geometry of low degree $G$-invariant irreducible plane curves are also stated, especially from the point of view of the St\"ohr-Voloch theory.

The essential idea underlying our investigation has  its origin in a basic result from classical invariant theory, which  finds that if $\Gamma\le {\rm GL}(3,K)$ is a linear group and
$F_1,F_2 \in K[X,Y,Z]$ are both $\Gamma$-invariant forms (homogeneous polynomials) of the same degree $d$,   then any non-trivial linear combination $F=\lambda_1F_1+\lambda_2 F_2$ is also a $\Gamma$-invariant form. The projective version concerns the projective linear group $G=\Gamma/Z(\Gamma)$,  together with  the pencil $\Lambda$ generated by the curves $\cC_1,\cC_2$ of homogeneous equations $F_1=0,F_2=0$, respectively.  It states that $\Lambda$ is $G$-fixed, that is, each curve $\cC$ of degree $d$ of equation $F=\lambda_1F_1+\lambda_2F_2=0$ is  $G$-invariant. A useful refinement that still ensures that the pencil is  $G$-fixed  requires  only that the rational function $F_1/F_2$  is $\Gamma$-invariant. Once such a $G$-fixed pencil $\Lambda$ has been found, if $\cD$ is any $G$-invariant plane curve of degree $d\ge d(G)$,  then $|G|$ is a lower bound on the number of common points of $\cD$ with a generically chosen curve $\cC$ from $\Lambda$. Comparison of this lower bound with the upper bound derived  from the B\'ezout theorem yields $|G|\le d(G)d$. In other words, if $d<d(G)/|G|$,  then $\cD$ has to be a curve in the pencil $\Lambda$. Thus the problem arises whether a $G$-fixed pencil of curves of degree $d(G)$ exist for each of the maximal, nonsporadic subgroups of $\PGL(3,q)$, namely $G\cong\PGL(3,q),\AGL(2,q), \overline{\AGL}(2,q),\PSU(3,d), NS_{q}, \Delta_q$, and $\PGL(2,q)$  for $q$ odd. The definitions of these groups are found  in Section \ref{sec:pgl}.
 We will show that the answer to this  is affirmative. In particular, the exact value of $d(G)$ is computed, which resolves Question (i) for these groups.
Furthermore, $\varepsilon(G)\ge d(G)(1-1/|G|)$. Actually, as we will show, equality attains and hence a complete solution for Question(ii) is obtained for each of the above seven groups. The study of Question (iii) requires a closer look at  the special features of the $G$-fixed pencils. They  differ depending on which of the above seven groups is taken for $G$. Nevertheless,  a case-by-case analysis is enough to obtain a complete solution also for this question.


\section{Background and preliminary results}
\label{sbp}
\subsection{Plane curves and their automorphisms}
For definitions and basic properties of plane algebraic curves, see \cite[Chapters 1-5]{hirschfeld-korchmaros-torres2008}.
In the projective plane $\PG(2,q)$ equipped with a projective reference system in coordinates $(X:Y:Z)$, a plane (algebraic) curve $\cC$ of homogeneous equation $F(X,Y,Z)=0$, with a homogeneous polynomial (or form) $F(X,Y,Z)\in \fq[X,Y,Z]$, consists of all points $P=(a:b:c)$ such that $F(a,b,c)=0$. As  customary, we regard $\cC$ as a curve over a fixed algebraic closure $K$ of $\fq$. Doing so, the points $P=(a:b:c)$ of $\PG(2,K)$ for which $F(a:b:c)=0$ are exactly the points of $\cC$. An affine equation of $\cC$ is $f(X,Y)=0$ where $f(X,Y)=F(X,Y,1)$. We use standard notation. The \emph{degree} of $\cC$  is $\deg(F(X,Y,Z))$. A \emph{component} of $\cC$ is a curve $\cG$ of homogeneous equation $G(X,Y,Z)=0$  such that $G(X,Y,Z)$ divides $F(X,Y,Z)$. A curve $\cC$ is (absolutely) \emph{irreducible} if $F(X,Y,Z)$ is irreducible over $K$; otherwise, $\cC$ is \emph{reducible} as it splits in irreducible curves, the \emph{components} of $\cC$.

Given two plane curves $\cC$ and $\cF$, for any point $P\in \PG(2,K)$, the \emph{intersection multiplicity} $I(P,\cC\cap \cF)$ between $\cC$ and $\cF$ at $P$ is a main tool in the study of plane curves,  see \cite[Chapter 3]{hirschfeld-korchmaros-torres2008}. A point $P$ is a common point of $\cC$ and $\cF$ if and only if $I(P,\cC\cap \cF)\ge 1$, and $P$ is a \emph{singular point} of $\cC$, with multiplicity $r$, if $I(P;\cC\cap\ell)=r$ for all but finitely many lines $\ell$ through $P$. The exceptions are the \emph{tangent lines} to $\cC$ at $P$. They satisfy $I(P,\cC\cap \ell)>r$, and their number is positive but does not exceed $r$. By the fundamental B\'ezout theorem, for any two plane curves $\cC$ and $\cF$ without a common component, $\deg(\cC)\deg(\cF)= \sum I(P,\cF\cap \cG)$ where $P$ ranges over the set of all common points of $\cC$ and $\cF$. A \emph{pencil} generated by the plane curves $\cF$ and $\cG$ of homogeneous equations $F(X,Y,Z)=0$ and $G(X,Y,Z)=0$, respectively, consists of all plane curves $\cC_\lambda$  of homogeneous equations $\lambda F(X,Y,Z)+G(X,Y,Z)=0$ with $\lambda$ ranging over $K\cup\{\infty\}$ where $\cC_\infty=\cF$. For every point $P\in \PG(2,K)$ which is not a base point, i.e. it is not  a common point of any two curves in the pencil, there exists  a unique curve from the pencil passing through $P$.  A \emph{net} $\Gamma$ generated by such a pencil $\Lambda$ together with  further plane curves $\cH$, off the pencil with homogeneous equations $H(X,Y,Z)=0$, consists of the curves in $\Lambda$ together with the plane curves $\cC_\lambda$  of homogeneous equations $\lambda F(X,Y,Z)+\mu G(X,Y,Z)+ H(X,Y,Z)=0$ with $(\lambda,\mu)$ ranging over $K\times K$. In other words, the net $\Gamma$  consists of all plane curves of homogeneous equation $\lambda F(X,Y,Z)+\mu G(X,Y,Z)+ \tau H(X,Y,Z)=0$ with $(\lambda;\mu;\tau)$ ranging over the non-trivial homogeneous triples over $K$, i.e., over the points of $\PG(2,K)$.
For every incident point-line pair $(P,t)$ of $\PG(2,K)$, the net $\Gamma$ contains some curve $\cC$ through $P$ and tangent to $\cC$ at $P$.

A \emph{linear} automorphism of $\cC$ is a projectivity $\alpha$ represented by a $3\times 3$ regular matrix $A=(a_{ij})$ such that $\cC$ is $\alpha$-invariant, i.e., $F(X,Y,Z)=cF(\bar{X},\bar{Y},\bar{Z})$ for some $c\in K$ where the column vector $[X,Y,Z]$ is the product of $A$ by the column vector $[\bar{X},\bar{Y},\bar{Z}]$. The linear automorphisms of $\cC$ form the \emph{linear automorphism group} of $\cC$ which is a subgroup of $\PGL(3,K)$. If $\cC$ is an irreducible plane curve other than a line, then its linear automorphism group is finite. It should be noted that $\cC$ may have nonlinear automorphisms, i.e.,  there may exist a rational map $\alpha:(X,Y,Z)\mapsto (U,V,W)$ with $U,V,W\in K[X,Y,Z]$ homogeneous polynomials of the same degree $\ge 2$ such that $F(X,Y,Z)=SF(U,V,W)$ with some homogeneous polynomial $S\in K[X,Y,Z]$. The \emph{automorphism group} of $\cC$ consists of its linear and nonlinear automorphisms, and it is finite unless $\cC$ is either rational or elliptic.

We point out that irreducible plane curves with many linear automorphisms  often have a truly geometric property related to inseparable function field extensions that curves in zero characteristic cannot possess.
An irreducible plane curve $\cC$ is \emph{nonclassical} if it has infinitely many inflection points, that is, if $I(P,\cC\cap t)>2$ for infinitely many nonsingular points $P\in \cC$ where $t$ is the tangent line to $\cC$ at $P$. This impossible phenomena in zero characteristic was the starting point of the St\"ohr-Voloch theory in 1986 \cite{SV}; for recent results see \cite[Sections 8.5, 8.6]{hirschfeld-korchmaros-torres2008} and \cite{anazar,bor}.
Such nonclassicality occurs for $\cC$ if and only if there is a power $p^h\ge 3$ of $p$, denoted by $\varepsilon_2(\cC)$, such that $I(P,\cC\cap t)=p^h$ for infinitely many nonsingular points $P\in \cC$  with tangent line $t$ to $\cC$  at $P$. Let $\cC$ be an irreducible plane curve with homogeneous equation $F(X,Y,Z)=0$. Then $\cC$ is nonclassical if and only if there exist four homogeneous polynomials
$U_i(X,Y,Z),H(X,Y,Z)\in K[X,Y,Z]$ with the same degree such that
\begin{equation}
\label{eqnoc}
U_1(X,Y,Z)^{p^h}X+U_2(X,Y,Z)^{p^h}Y+U_3(X,Y,Z)^{p^h}Z=H(X,Y,Z)F(X,Y,Z)
\end{equation}
where $\varepsilon_2(\cC)=p^h$. If $\cC$ is nonsingular,  then $H(X,Y,Z)$ is constant. An irreducible plane curve $\cC$ defined over a finite field $\mathbb{F}_{p^k}$ is \emph{Frobenius nonclassical} if the tangent line $t$ to $\cC$ at $P$ passes through the Frobenius image of $P$ for infinitely many nonsingular points $P\in \cC$. Here, the \emph{Frobenius image} of the point $P(a:b:c)$ of $\PG(2,K)$ is the point $\Phi(P)=(a^{p^k}:b^{p^k}:c^{p^k})$ of $\PG(2,K)$. For odd $p$, any  Frobenius nonclassical curve is nonclassical. The converse does not hold in general, as $\cC$ is Frobenius nonclassical if and only if (\ref{eqnoc}) holds and there exists a homogeneous polynomial $L(X,Y,Z)\in K[X,Y,Z]$ such that
\begin{equation}
\label{eqnoc1}
U_1(X,Y,Z)X^{p^{k-h}}+U_2(X,Y,Z)Y^{p^{k-h}}+U_3(X,Y,Z)Z^{p^{k-h}}=L(X,Y,Z)F(X,Y,Z).
\end{equation}
Frobenius nonclassical curves are relevant in the study of plane curves of $\PG(2,p^k)$ which have many points in $\PG(2,p^k)$ compared with the degree of the curve.
In fact for $p>2$, if the number $N_{p^k}$ of points of an irreducible plane curve defined over $\mathbb{F}_{p^k}$ of degree $d$ exceeds $\ha d(p^k+d-1)$, then the curve is Frobenius nonclassical.
On the other hand, for an irreducible plane curve $\cC$ defined over $\mathbb{F}_{p^k}$ containing some points in $\PG(2,p^k)$, a lower bound on $N_{p^k}$ is $|G|/|G_P|$ where $G$ is a linear automorphism group of $\cC$ and $G_P$ is the stabilizer of $P$ in $G$. This shows that if $|G|$ is large (with $|G_P|$ relatively small) then $\cC$ has to be Frobenius classical.

\subsection{Maximal Subgroups of $\PGL(3,q)$}\label{sec:pgl}
The projective general linear group $\PGL(3,q)$ has order $(q^3-1)q^3(q^2-1)=(q^2+q+1)q^3(q+1)(q-1)^2$ and its proper subgroups have been  known for a long time, see  H.H Mitchell  \cite{mitchell} and R.W. Hartley \cite{hartley}, for $q$ odd and $q$ even, respectively. For an exposition in standard notation,  see  \cite{king}. For our purpose, we limit ourselves to recall the classification for those subgroups which are not isomorphic to a subgroup of  $\PGL(3,q')$ for $q'<q$.

For $q$ odd:
\begin{enumerate}
\item[{\rm (i)}] $\PSL(3,q)$ for $q\equiv 1 \pmod 3$, having order $\frac{1}{3}(q^2+q+1)q^3(q+1)(q-1)^2$
\item[{\rm(ii)}] the stabilizer of a point of $\PG(2,q)$, having order $q^3(q+1)(q-1)^2$
\item[{\rm(iii)}] the stabilizer of a line of $\PG(2,q)$,  having  order $q^3(q+1)(q-1)^2$
\item[{\rm(iv)}] the stabilizer of an Hermitian curve of $\PG(2,q)$ for $q=n^2$,  having  order $n^3(n^3+1)(n-1)^2$
\item[{\rm (v)}] the stabilizer of a triangle of $\PG(2,q)$, having order $6(q-1)^2$
\item[{\rm (vi)}] the stabilizer of an imaginary triangle (i.e., a triangle in $\PG(2,q^3)\setminus \PG(2,q)$), having order $3(q^2+q+1)$
\item[{\rm (vii)}] the stabilizer of an irreducible conic, having order $q(q+1)(q-1)$
\item[{\rm (viii)}] sporadic subgroups: the Hessian groups of order $216$ if $q -1$ is divisible by $9, 72$ and $36$ if $q -1$ is divisible by $3$. Groups of order $168$ (when $-7$ is a square in $\fq$), $360$ (when $5$ is a square in $\fq$) and there is a non-trivial cube root of unity in $\fq$, $720$ (when $q$ is an even
power of $5$), and $2520$ (when $p=5$ and $q$ is an even power of $5$).
\end{enumerate}
Notice that (ii) and (iii) are isomorphic but not conjugate in $\PGL(3,q)$. Up to conjugacy in $\PGL(3,q)$ (equivalently, up to a change of projective frame in $\PG(2,q)$), (ii) is the affine group $\AGL(2,q)$, (iii) is the dual affine group $\overline{\AGL}(2,q)$, (iv) is the projective unitary group $\PGU(3,n)$. Also, (v) is a semidirect product of $C_{q-1}\times C_{q-1}$ by a complement isomorphic to ${\rm{Sym}}_3$,
(vi) is the normalizer of a Singer subgroup of $\PG(2,q)$ of order $q^2+q+1$, and it is a semidirect product of $C_{q^2+q+1}$ by a complement $C_3$.
The groups of orders $168,360,720$, and $2520$ are isomorphic to $\PSL(2,7)$), $\rm{Alt}_6$, $\rm{Alt}_6\cdot 2$
and $\rm{Alt}_7$ respectively. The groups of orders $216$ and $72$ are isomorphic to $\PGU(3,2)$  and $\PSU(3,2)$, respectively, with the
group of order $36$ a subgroup of the latter.

For $q$ even, case (vii) does not occur, while there is a unique sporadic subgroup, namely a group of order $360$ for $q=4$ isomorphic to ${\rm{Alt}}_6$.
\section{Summary of the results}
\label{rr}
The most important family of $\PGL(3,q)$-invariant irreducible plane curves consists of the double-Frobenius nonclassical planes curves $\cF_{n,m}$, introduced and   investigated in \cite{bor}, where ${\rm{gcd}}(m,n)=1$ and $\cF_{n,m}$ stands for the irreducible plane curve of affine equation
\begin{equation}\label{eq:dfbc}
 \frac{(X^{q^n}-X)(Y^{q^m}-Y)-(Y^{q^n}-Y)(X^{q^m}-X)}{(X^{q^2}-X)(Y^q-Y)-(Y^{q^2}-Y)(X^q-X)} = 0
\end{equation}
of genus $g(\cF_{n,m}) = (q^{n-m} + q^m)(\ha q^n-(1 + q + q^2)+ (q + 1)(1 + q + q^2)$.
The special case $\cF_{3,1}$, named DGZ curve in \cite{DGZ}, is widely known and of particular interest as being, for the case $q=p$, the only known example, up to birational equivalency, of an ordinary curve with more than $cg^{3/2}$ automorphisms, for some constant $c$ independent from $g$ and $p$.  This conforms  with the conjecture made by  Guralnick and Zieve in 1998  that no other such examples existed,  see \cite{DGZ}.
The dual curve of $\cF_{3,1}$ is the curve $\cF_{3,2}$, that is, the dual DGZ curve. From Theorem \ref{the22set21A}, $d(\PGL(3,q))=q^3-q$, $\varepsilon(\PGL(3,q))=q^2-q$, and  the DGZ curve is the only  $\PGL(3,q)$-invariant irreducible plane curve of degree $q^3-q$. If $q\equiv 1 \pmod 3$,  then $\PSL(3,q)$ is a maximal subgroup of $\PGL(3,q)$ of index $3$ but the same result holds, see Theorem \ref{the22set22A}.

The subgroup of $\PGL(3,q)$ preserving a line of $\PG(2,q)$ is a maximal subgroup isomorphic to the affine group $\AGL(2,q)$ of order $q^2(q-1)(q^3-q)$. From Theorems \ref{the030921}, \ref{the040921}, $d(\AGL(2,q))=q^3-q^2$, $\varepsilon(\AGL(2,q))=q^2-q$. Furthermore, from Theorem \ref{the050921}, all $\AGL(2,q)$-invariant irreducible curves of degree $q^3-q^2$ belong, up to projectivity, to the pencil
\begin{equation*}
 \frac{(X^{q^3}-X)(Y^{q}-Y)-(Y^{q^3}-Y)(X^{q}-X)}{(X^{q^2}-X)(Y^q-Y)-(Y^{q^2}-Y)(X^q-X)}-\lambda = 0, \quad \lambda \in K.
 \end{equation*}
The subgroup $\overline{\AGL}(2,q)$ of $\PGL(3,q)$ fixing a point of $\PG(2,q)$ is also a maximal subgroup of $\PGL(3,q)$, which is isomorphic (but not conjugate) to $\AGL(2,q)$.  From Theorems \ref{the070921} and \ref{the070921A}, $d(\overline{\AGL}(2,q))=q^3-q^2=d(\AGL(2,q))$ and
$\varepsilon(\overline{\AGL}(2,q)=q^2-q=\varepsilon(\AGL(2,q))$. Furthermore, from Theorem \ref{the070921B}, all $\AGL(2,q)$-invariant irreducible curves of degree $q^3-q^2$ belong, up to projectivity, to the pencil:
\begin{equation*}
 \frac{(X^{q^3}-X)(Y^{q}-Y)-(Y^{q^3}-Y)(X^{q}-X)}{(X^{q^2}-X)(Y^q-Y)-(Y^{q^2}-Y)(X^q-X)}-\lambda \frac{(X^{q^2}-X)(Y^q-Y)-(Y^{q^2}-Y)(X^q-X)}{(Y^q-Y)^{q+1}}= 0,
\end{equation*}
with $\lambda\in K\setminus\{1\}.$

For $q$ square, another large maximal subgroup of $\PGL(3,q)$ is the three-dimensional projective unitary group $\PGU(3,\mathbb{F}_n)$ with $q=n^2$. The best known example of plane curves of $\PG(2,q)$  whose automorphism group is $\PGU(3,n)$ is the Hermitian curve
$\cH_n$, which is the nonsingular curve given by the affine equation
\begin{equation}\label{eq:herm}
 Y^n+Y-X^{n+1} = 0.
\end{equation}
As well known, $\cH_n$ is the unique curve of genus $g$ with more than $16g^4$ automorphisms,  see \cite{stherm}. From Theorem \ref{thm:pgu}, $d(\PGU(3,n))=n+1$, $\varepsilon(\PGU(3,n))=qn-n$, and $\cH_q$ is the only  $\PGU(3,n)$-invariant irreducible plane curve of degree $n+1$.

Further maximal subgroup of $\PGL(3,q)$ is a stabilizer $\Delta_q$ of a triangle, which is the semidirect product of an abelian group of order $(q-1)^2$ fixing the vertices of the triangle by a complement of order $6$ which acts on the set of the vertices as ${\rm{Sym}}_3$ on three letters. From Theorem \ref{the080921C}, $d(\Delta_q)=q-1$, $\varepsilon(\Delta_q)=q-1$, and the only $\Delta_q$-invariant irreducible plane curve of degree $q-1$ is the Fermat curve. Moreover, those that are $\Delta_q$-invariant, and of degree $2q-2$, have homogeneous equation $\lambda(X^{q-1}+Y^{q-1}+Z^{q-1})^2+(XY)^{q-1}+(YZ)^{q-1}+(ZX)^{q-1})=0$ with $\lambda\in K.$

Another maximal subgroup of $\PGL(3,q)$ preserves a triangle, but this time in $\PG(2,q^3)$. It is the normalizer $NS_{q}$ of the Singer subgroup that  is the semi-direct product of a cyclic group of order $q^2+q+1$ (the Singer subgroup) by a complement $C_3$ of order $3$. All irreducible plane curves of degree $d$ that are invariant by the Singer subgroup have been determined for $d\le 2q+2$,  see Lemma \ref{lem110921A} and Theorem \ref{the130921}, which
states that $d(NS_q)=q+2$, $\varepsilon(NS_q)=q-1$, and that the Pellikaan curve of homogeneous equation
\begin{equation}
\label{joco}
 X^{q+1}Y+Y^{q+1}Z+Z^{q+1}X=0
\end{equation}
is the only $NS_q$-invariant curve of degree $q+2$, up to projectivity,  see \cite{Pel, CS1, CS2}.


For $q$ odd, the list of nonsporadic maximal subgroups of $\PGL(3,q)$ also includes the two dimensional projective general linear group $\PGL(2,\fq)$. Several examples of irreducible plane curves whose automorphism group is $\PGL(2,\fq)$ are presented in the literature. Apart from the unique $\PGL(3,q)$-invariant conic, those of minimum degree $q+1$  belong, up to projectivity, to the pencil consisting of the plane curves of homogeneous equation
\begin{equation}\label{eq0}
Y^{q+1}-(X^{q}Z+XZ^q)- \lambda (Y^{2}-2XZ)^{(q+1)/2}=0, \quad \lambda \in K\cup \{\infty\}.
\end{equation}
The above quoted theorems 
provide a complete solution for Questions (i), (ii), and  (iii) when $G$ is one of the maximal, nonsporadic subgroups of $\PGL(3,q)$, namely,  $\PGL(3,q), \PSL(3,q), q\equiv 1 \pmod 3, \AGL(2,q), \overline{\AGL}(2,q),$ $\PGU(3,q), \PGL(2,q), NS_q$,  and $\Delta_q$. In particular, the solution of Question (iii) proves  the existence of previously unknown infinite families of $G$-invariant curves of degree $d(G)$. Moreover, as a byproduct of our investigation, Theorem \ref{thm:pointsdgz} gives the number of points of the DGZ curve over some extensions $\mathbb{F}_{q^i}$ of $\fq$, and analogous results can be obtained for other curves $\cF_{n,m}$.

In most examples of plane curves with large linear automorphism groups which we show in this paper, the whole automorphism group of the curve is linear. Although this appears to be a general behavior, we point out
in Section \ref{hemis} that the opposite case can also occur for some irreducible plane curve $\cC$, that is, the full automorphism group of $\cC$ is large and nonlinear, but its linear automorphisms are still numerous. More precisely, we will show that this happens to the irreducible plane curve $\cK$ of genus $g=q^2-q-1$ and affine equation
\begin{equation}
\label{eq9oct21}
(X+Y)^{q+1} = 2(XY+(XY)^q),
\end{equation}
whose full automorphism group is isomorphic to $\PGL(2,q)$. In fact, the linear automorphisms of $\cK$ form a subgroup of order $q(q-1)>g$.  This curve was encountered in the study of hemisystems of the Hermitian surface arising from the Fuhrmann-Torres $\mathbb{F}_{q^2}$-maximal curves,  see \cite{KNS2017}. Actually, $\cK$ belongs to a larger family of curves with the same behavior. The family is found and investigated with an approach similar to that used in the previous sections. It turns out that one may obtain results  analogous to those therein when investigating curves that are invariant under some finite subgroup of  the  group of birational automorphisms of the projective plane $\PG(2,K)$, the Cremona group.

\section{Proofs of the Results}\label{herm+aut}
\subsection{$\AGL(2,q)$-invariant curves}
\label{ssagl} It is useful to take the line $\ell_\infty$ of equation $Z=0$ and identify the  stabilizer of $\ell_\infty$ in $\PGL(3,q)$ with $\AGL(2,\mathbb{F}_q)=\AGL(2,q)$,  which is the semidirect product of the translation group $T$ by the two-dimensional linear group $\GL(2,q)$. Therefore, for every $\alpha\in \AGL(2,q)$, there exists a $3\times 3$ nonsingular matrix $A$ with all entries in $\mathbb{F}_q$  such that if $L'=a'X+b'Y+c'Z=0$ is  the image of the line $L=aX+bY+cZ=0$ by $\alpha$,  then the vector $[a',b',c']$ is the product of $A$ by the vector $[a,b,c]$. Accordingly, the approach at the onset  of Section \ref{secpgl3} can be used. For this purpose, let $L_i(X,Y,Z)=0$, $i=1,\ldots, q^2+q,$ be the lines of $\PG(2,q)$ different from $\ell_\infty$. Since $\alpha$ preserves the set of all lines $L_i$, the homogeneous polynomial
$$U(X,Y,Z)=\prod\limits_{i=1}^{q^2+q}L_i(X,Y,Z)$$
has the property that $\alpha(U(X,Y,Z))=\det(A)^{q^2+q}U(X,Y,Z)$. As $\det(A)\in \mathbb{F}_q^*$, the $(q-1)$-th power of the product of the $q^2+q$ linear forms $L_i$:
$$G(X,Y,Z)=\big(\prod\limits_{i=1}^{q^2+q}L_i(X,Y,Z)\big)^{q-1}$$
is $\AGL(2,q)$-invariant homogeneous polynomial. Observe that $\deg(G(X,Y,Z))=q^3-q$. Let $\cG$ be the reducible plane curve of equation $G(X,Y,Z)=0$.

Now take the pencil $\Lambda$ generated by $\cG$ together with the line $\ell_\infty$ taken $q^3-q$ times. Then $\Lambda$ consists of all curves $\cC_\lambda$ of equation $ \lambda G(X,Y,Z)-Z^{q^3-q}=0$ where $\lambda\in K \cup \{\infty\}$ and $\cG=\cC_\infty$. Since $\ell_\infty$ is $\AGL(2,q)$-invariant, $\Lambda$ is an $\AGL(2,q)$-fixed pencil.

\begin{lem}\label{pencilB}  Every curve in $\Lambda$ is  reducible, but none of the irreducible components of a curve in $\Lambda$ is $\AGL(2,q)$-invariant.
 \end{lem}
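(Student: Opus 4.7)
The plan splits into two parts. For reducibility, I handle each $\lambda$ separately. For $\lambda = \infty$, $\cC_\infty = \cG = \bigl(\prod L_i\bigr)^{q-1}$ is visibly a product of lines, hence reducible. For $\lambda \in K^*$, set $M = \prod L_i$ so that $G = M^{q-1}$, and note $Z^{q^3-q} = (Z^{q(q+1)})^{q-1}$ since $\deg M = q(q+1)$. Picking $\nu \in K$ with $\nu^{q-1} = \lambda$ and applying the identity $A^{q-1} - B^{q-1} = \prod_{\zeta \in \mu_{q-1}}(A - \zeta B)$ over $K$, I factor
\[
\lambda G - Z^{q^3-q} \;=\; (\nu M)^{q-1} - (Z^{q(q+1)})^{q-1} \;=\; \prod_{\zeta \in \mu_{q-1}} \bigl(\nu M - \zeta Z^{q(q+1)}\bigr),
\]
exhibiting $\cC_\lambda$ as the union of $q-1$ distinct subcurves $\cD_c : M - c\, Z^{q(q+1)} = 0$ with $c = \zeta/\nu$ ranging over the $(q-1)$-th roots of $1/\lambda$. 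The degenerate case $\lambda = 0$ gives $\cC_0 = \ell_\infty^{q^3-q}$, which is reducible as a non-reduced multiple of a line.

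The core of the second part is the transformation law of $M$ under $\AGL(2,q)$, which I expect to be the main technical obstacle. Rewriting $M$ in the affine chart $Z=1$ by grouping the $q^2+q$ affine lines by direction and applying $\prod_{m \in \mathbb{F}_q}(T-m) = T^q - T$ yields $M(X,Y,1) = \pm(uv^q - u^q v)$ with $u = X^q - X$ and $v = Y^q - Y$; rehomogenization gives $M = \pm(UV^q - U^q V)$ where $U = X^q - XZ^{q-1}$ and $V = Y^q - YZ^{q-1}$. A Frobenius computation using $a^q = a$ for $a \in \mathbb{F}_q$ then shows that for every $\alpha \in \AGL(2,q)$ one has $\alpha^* Z = Z$, and that the linear part $A \in \GL(2,q)$ of $\alpha$ acts on the pair $(U,V)$ through its matrix. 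Expanding in the spirit of a Moore-determinant calculation yields $\alpha^*(UV^q - U^q V) = \det(A)\,(UV^q - U^q V)$, and hence $\alpha^* M = \det(A) \cdot M$.

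With this transformation law in hand, the rest follows quickly. For $\lambda \in K^*$, the subcurve $\cD_c$ is sent by $\alpha$ to $\cD_{c/\det(A)}$, a different subcurve of the factorization whenever $\det(A) \neq 1$; since $\det$ ranges over the full $\mathbb{F}_q^*$, no $\cD_c$ is $\AGL(2,q)$-invariant, and because distinct $\cD_c$ share no common irreducible component, the same rules out invariance of any irreducible subcomponent of $\cD_c$. For $\cC_\infty = \cG$, the irreducible components are the affine lines $L_i$, and $\AGL(2,q)$ is transitive on the set of $q^2+q$ affine lines (already the translation subgroup moves any $L_i$ off itself), so none of the $L_i$ is invariant either. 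This completes the plan.
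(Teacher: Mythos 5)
Your proposal is correct and follows essentially the same route as the paper: both factor $\cC_\lambda$ into the $q-1$ curves $\nu M-\kappa Z^{q(q+1)}=0$ ($\kappa\in\mathbb{F}_q^*$) via the $(q-1)$-th power structure and then show $\AGL(2,q)$ permutes these factors transitively, the paper using only the diagonal subgroup $\pi_\rho:(X,Y)\mapsto(\rho X,Y)$ where you derive the full transformation law $\alpha^*M=\det(A)\,M$. The only substantive remark is that your explicit treatment of the degenerate member $\cC_0:Z^{q^3-q}=0$ exposes a caveat the paper glosses over as well: its unique component $\ell_\infty$ \emph{is} $\AGL(2,q)$-invariant, so the lemma should be read as excluding that member (harmless for the application, where the component is an irreducible curve other than a line).
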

\begin{proof} 
Let $\cC_\lambda$ with $\lambda\in K$ be a curve in $\Lambda$. Define $\mu$ to be  $\mu^{q-1}=\lambda$.  Then $\cC_\lambda$ has affine equation
$$\left(\mu \begin{vmatrix} X & X^{q} & X^{q^2} \\ Y & Y^{q} & Y^{q^2} \\ 1 & 1 & 1 \end{vmatrix}\right)^{q-1}-1=0$$
which can also be written as
$$\prod_{\kappa\in \mathbb{F}_q^*}\left(\mu \begin{vmatrix} X & X^{q} & X^{q^2} \\ Y & Y^{q} & Y^{q^2} \\ 1 & 1 & 1 \end{vmatrix}-\kappa\right)=0.$$
This shows that $\cC_\lambda$ splits into $q-1$ plane curves $\cF_\kappa$ with $\kappa \in \mathbb{F}_q^*$. The projectivity $\pi_\rho: (X,Y)\mapsto (\rho X,  Y)$ with $\rho \in \mathbb{F}_q^*$ is in $\AGL(2,q)$ and
takes $\cF_\kappa$ to $\cF_{\rho^{-1}\kappa}$, and the subgroup $R=\{\pi_\rho | \rho\in \mathbb{F}_q^*\}$ acts transitively on the set  of the set of the $q-1$ plane curves $\cF_\kappa$. Therefore, none of these curves is $\AGL(2,q)$-invariant. From this, our claim follows.
\end{proof}
\begin{lem}\label{lem4set21}  No irreducible plane curve of degree smaller than $q^3-q^2$ is  $\AGL(2,q)$-invariant.
\end{lem}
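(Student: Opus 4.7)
Set $G := \AGL(2,q)$ and suppose, for a contradiction, that $\cD$ is an irreducible $G$-invariant plane curve of degree $d<q^3-q^2$. Since $\ell_\infty$ is the unique $G$-invariant line of $\PG(2,K)$, I may assume $\cD$ is not a line. The plan is a B\'ezout argument driven by the pencil $\Lambda$ just constructed.

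First I would upgrade the $G$-fixedness of $\Lambda$ to the stronger fact that every single member $\cC_\lambda\in\Lambda$ is itself $G$-invariant. Indeed, if $\alpha\in G$ is represented by a matrix $A$ over $\mathbb{F}_q$ normalized so that its last row is $(0,0,1)$, then $\alpha(Z)=Z$ and $\alpha(G(X,Y,Z))=\det(A)^{(q^2+q)(q-1)}G(X,Y,Z)=G(X,Y,Z)$, because $\det(A)\in\mathbb{F}_q^*$ and the exponent $(q^2+q)(q-1)=q^3-q$ is divisible by $q-1$. Hence the equation $\lambda G-Z^{q^3-q}$ of $\cC_\lambda$ is fixed by $\alpha$, for every $\lambda$.

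Next I would produce a point $P\in\cD$ that is affine and whose $G$-stabilizer is trivial. The set $\cD\cap\ell_\infty$ is finite because $\cD\ne\ell_\infty$. For the stabilizer condition, each nontrivial element of $\PGL(3,K)$ fixes pointwise at most a projective line together with finitely many isolated points; therefore $\bigcup_{g\ne 1}\mathrm{Fix}(g)$ is a finite union of lines and points of $\PG(2,K)$, which cannot contain the irreducible curve $\cD$ since $\cD$ has degree at least $2$. Consequently both exclusions remove only finitely many points of $\cD$, and a suitable $P$ exists.

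Finally I would invoke B\'ezout. Since $P$ is affine, $\phi(P):=G(P)/Z(P)^{q^3-q}\ne 0$, so the unique member $\cC\in\Lambda$ through $P$ is different from the ``curve'' $\ell_\infty^{q^3-q}$, hence Lemma~\ref{pencilB} applies to $\cC$. The first step guarantees that $\cC$ is $G$-invariant; together with the $G$-invariance of $\cD$ and the trivial stabilizer of $P$, this places the full $G$-orbit $\cO:=G\cdot P$ of size $|G|=q^3(q-1)^2(q+1)$ inside $\cC\cap\cD$. Lemma~\ref{pencilB} forbids any irreducible component of $\cC$ from being $G$-invariant, so the irreducible $G$-invariant curve $\cD$ is not a component of $\cC$, and B\'ezout yields
\[
q^3(q-1)^2(q+1)=|\cO|\le\deg(\cD)\deg(\cC)=d(q^3-q),
\]
whence $d\ge q^2(q-1)=q^3-q^2$, contradicting the hypothesis. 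The only delicate point is the existence of a point of $\cD$ with trivial $G$-stabilizer; this is exactly where the assumption that $\cD$ is not a line is used.
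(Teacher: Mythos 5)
Your proof is correct and follows essentially the same route as the paper: pick a point of $\cD$ in a long $\AGL(2,q)$-orbit off the base locus, let $\cC$ be the unique member of the $\AGL(2,q)$-fixed pencil $\Lambda$ through it, and play B\'ezout against Lemma~\ref{pencilB}. You also supply details the paper leaves implicit (the invariance of each individual $\cC_\lambda$ and the existence of a point with trivial stabilizer); the one blemish is the claim $\phi(P)\neq 0$, which can fail when $P$ lies on an affine $\mathbb{F}_q$-rational line (then $\cC=\cC_\infty$), but all your argument actually needs is $Z(P)\neq 0$, i.e.\ $\cC\neq \cC_0$, so the conclusion stands.
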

\begin{proof} Let $\cD$ be an $\AGL(2,q)$-invariant irreducible plane curve, and  select  a point $P\in \cC$ from a long $\AGL(2,q)$-orbit that is not a base point of $\Lambda$. Let $\cC_\lambda$ be the unique curve in $\Lambda$ through $P$. Then $\cD$ is an irreducible component of $\cC_\lambda$; otherwise the B\'ezout theorem would yield $\deg(\cD)\ge |\AGL(2,q)|/\deg(\cC_\lambda)=q^2(q-1)(q^3-q)/(q^3-q)=q^3-q^2$. Therefore,  the claim follows from Lemma \ref{pencilB}.
\end{proof}
Since the DGZ curve is an $\AGL(2,q)$-invariant curve of degree $q^3-q^2$,  Lemma \ref{lem4set21} solves Question (i) for $G=\AGL(2,q)$.
\begin{theorem}
\label{the030921} $d(\AGL(2,q))=q^3-q^2$.
\end{theorem}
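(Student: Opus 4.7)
The plan is to obtain the equality $d(\AGL(2,q)) = q^3 - q^2$ by combining an explicit upper bound from a known curve with the lower bound already established in Lemma \ref{lem4set21}.

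First, I would exhibit an $\AGL(2,q)$-invariant irreducible plane curve of degree exactly $q^3 - q^2$, namely the DGZ curve $\cF_{3,1}$ from equation~(\ref{eq:dfbc}) with $n=3$, $m=1$. Since $\cF_{3,1}$ is $\PGL(3,q)$-invariant and $\AGL(2,q)$ is a subgroup of $\PGL(3,q)$, this curve is $\AGL(2,q)$-invariant. To check its degree, observe that in the affine equation (\ref{eq:dfbc}) the numerator has degree $q^n + q^m = q^3 + q$ while the denominator has degree $q^2 + q$; after the division indicated in (\ref{eq:dfbc}), which is exact because $\cF_{3,1}$ is irreducible, the resulting defining polynomial has degree $(q^3+q)-(q^2+q)=q^3-q^2$. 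Thus $d(\AGL(2,q)) \leq q^3 - q^2$.

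The matching lower bound $d(\AGL(2,q)) \ge q^3-q^2$ is precisely the content of Lemma~\ref{lem4set21}, which was proved via the $\AGL(2,q)$-fixed pencil $\Lambda$ of (reducible) curves of degree $q^3-q$ together with Lemma~\ref{pencilB}: any $\AGL(2,q)$-invariant irreducible curve $\cD$ must, after choosing a long-orbit point $P$ off the base locus of $\Lambda$, be a component of the unique member $\cC_\lambda$ of $\Lambda$ through $P$, and Lemma~\ref{pencilB} forbids this when $\deg(\cD) < q^3-q^2$, since otherwise B\'ezout forces $\deg(\cD) \ge |\AGL(2,q)|/\deg(\cC_\lambda) = q^2(q-1)(q^3-q)/(q^3-q) = q^3 - q^2$.

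Combining the two inequalities yields $d(\AGL(2,q)) = q^3 - q^2$, proving the theorem. There is no genuine obstacle here: the only point that deserves a short verification is the degree computation for $\cF_{3,1}$, and the nontrivial work (the B\'ezout argument ruling out smaller invariant curves) has already been carried out in Lemmas~\ref{pencilB} and~\ref{lem4set21}.
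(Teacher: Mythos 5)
Your proposal is correct and follows exactly the paper's argument: the paper likewise obtains the lower bound from Lemma \ref{lem4set21} and the upper bound by observing that the DGZ curve $\cF_{3,1}$ is an $\AGL(2,q)$-invariant irreducible curve of degree $q^3-q^2$. The only cosmetic difference is that you spell out the degree computation for $\cF_{3,1}$, which the paper takes for granted from \cite{bor,DGZ}.
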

Now we are in a position to solve Question (ii) for $G=\AGL(2,q)$. Since the dual DGZ curve is an $\AGL(2,q)$-invariant curve of degree $q^3-q$, we have $\varepsilon \ge q^3-q-(q^3-q^2)=q^2-q$. We prove that equality holds.
\begin{theorem}
\label{the040921}  $\varepsilon(\AGL(2,q))=q^2-q$.
\end{theorem}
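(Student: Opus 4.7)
The plan is to establish matching bounds on $\varepsilon(\AGL(2,q))$. The upper bound $\varepsilon(\AGL(2,q))\le q^2-q$ is immediate from the dual DGZ curve: it is $\AGL(2,q)$-invariant, irreducible, and of degree $q^3-q=d(\AGL(2,q))+(q^2-q)$, so an invariant irreducible curve exists at distance exactly $q^2-q$ from $d(\AGL(2,q))$. The substance of the theorem is the matching lower bound, namely the non-existence of any $\AGL(2,q)$-invariant irreducible plane curve of degree $d$ with $q^3-q^2<d<q^3-q$.

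To prove the lower bound, I would imitate the B\'ezout/pencil argument of Lemma~\ref{lem4set21}, replacing the pencil $\Lambda$ (whose members have degree $q^3-q$) by an $\AGL(2,q)$-fixed pencil $\Lambda'$ whose members have the smaller degree $q^3-q^2$. A natural candidate is
\[
\Lambda'\colon\ \alpha\,F(X,Y,Z)+\beta\,Z^{q^3-q^2}=0,
\]
where $F(X,Y,Z)=0$ is the homogeneous equation of $\cF_{3,1}$ (the DGZ curve), of degree $q^3-q^2$. A direct verification using $a^{q^i}=a$ for $a\in\mathbb{F}_q$ together with Frobenius linearity shows that both $F$ and $Z^{q^3-q^2}$ are \emph{strictly} $\AGL(2,q)$-invariant as forms, so every curve $\cC_\lambda\in\Lambda'$ is $\AGL(2,q)$-invariant.

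Assume, for contradiction, that $\cD$ is an $\AGL(2,q)$-invariant irreducible curve with $q^3-q^2<d:=\deg(\cD)<q^3-q$. Since $\cD$ is not a line, the set of its points fixed by some non-identity element of $\AGL(2,q)$ is a proper closed subset of $\cD$, hence finite; together with the (finite) base locus of $\Lambda'$, this lets us choose $P\in\cD$ with trivial $\AGL(2,q)$-stabilizer and outside the base locus. The $\AGL(2,q)$-orbit of $P$ then has $|\AGL(2,q)|=q^2(q-1)(q^3-q)$ points, lies in $\cD$ by invariance of $\cD$, and lies in the unique $\cC_\lambda\in\Lambda'$ through $P$ by invariance of $\cC_\lambda$. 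Because $\cD$ is irreducible with $\deg(\cD)>\deg(\cC_\lambda)=q^3-q^2$, $\cD$ is not a component of $\cC_\lambda$, and B\'ezout's theorem yields
\[
q^2(q-1)(q^3-q)\ \le\ d\cdot(q^3-q^2),
\]
i.e.\ $d\ge q(q-1)(q+1)=q^3-q$, a contradiction. The main delicate point is the verification that $\Lambda'$ is curve-wise $\AGL(2,q)$-fixed, i.e.\ that its two generators carry the same $\AGL(2,q)$-character; this is precisely the refinement of the classical pencil principle recalled in the introduction, after which the B\'ezout estimate closes the argument as in Lemma~\ref{lem4set21}.
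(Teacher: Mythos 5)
Your proof is correct, and its main step is genuinely more direct than the paper's. Both arguments start from the same $\AGL(2,q)$-fixed pencil generated by the DGZ curve $\cF_{3,1}$ and $Z^{q^3-q^2}$ (member-wise invariance holds because the Moore-determinant quotient defining $\cF_{3,1}$ and the form $Z^{q^2(q-1)}$ both transform trivially under $\fq$-matrices stabilizing $Z=0$, so the ratio of the two generators is a genuinely invariant rational function), and both feed a full orbit of a generic point of $\cD$ into B\'ezout. The difference is the endgame. The paper concludes only that $\cD$ must be a \emph{component} of the pencil member $\cC_\lambda$ through the chosen point, writes $\cC_\lambda$ as $\cD$ together with a complementary invariant component $\cF$ of degree $<q^2-q$, and spends the bulk of the proof ruling $\cF$ out by analysing $\cF\cap\ell$ for lines $\ell$ of $\AG(2,q)$ and counting multiplicities at points of $\PG(2,q)$. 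You instead observe that the pencil members have degree exactly $d(\AGL(2,q))=q^3-q^2$, so the hypothesis $\deg(\cD)>q^3-q^2$ already forbids $\cD$ from being a component; since $\cD$ is irreducible the two curves then share no component, and B\'ezout gives $|\AGL(2,q)|\le \deg(\cD)\,(q^3-q^2)$, i.e.\ $\deg(\cD)\ge q^3-q$, at once. This exploits the exact factorization $|\AGL(2,q)|=(q^3-q)(q^3-q^2)$ and closes the gap in one line; the paper's component analysis is the more robust tool one needs when the available fixed pencil has degree strictly larger than $d(G)$, so that an invariant curve really can sit inside a member with room to spare. Two minor points to make explicit: choose $P$ also off $\ell_\infty$ (possible, since $\ell_\infty$ is not a component of the irreducible $\cD$), so that $\cC_\lambda$ is not the non-reduced member $Z^{q^3-q^2}=0$; and note that the existence of a point with trivial stabilizer uses that the fixed locus of each nontrivial projectivity is a union of linear subspaces, which cannot contain the non-linear irreducible curve $\cD$.
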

\begin{proof}
We use the $\AGL(2,q)$-fixed pencil $\Lambda$ generated by the DGZ curve $\cF_{3,1}$ and the totally reducible curve $\cL$ of equation $Z^{q^3-q^2}$ which is the line $\ell_\infty$ taken $q^3-q^2$.  Let $\cD$ be an $\AGL(2,q)$-invariant irreducible plane curve whose degree is smaller than $q^3-q$. Then the proof of lemma \ref{lem4set21} shows that $\cD$ is a component of a curve $\cC_\lambda$ in $\Lambda$. There is component $\cF$ of $\cC_\lambda$ such that $\cC_\lambda$ splits into $\cD$ and $\cF$. Since both $\cD$ and $\cC_\lambda$ are $\AGL(2,q)$-invariant, $\cF$ is also $\AGL(2,q)$-invariant. From Lemma \ref{pencilB}, $\deg(\cD)\ge q^3-q^2$. Therefore, $\deg(\cF)< q^2-q$, and $\cF$ is reducible by Lemma \ref{pencilB}. We show that no line of $\AG(2,q)$ is a component of $\cF$. If this occurs for a line then every line of $\AG(2,q)$ is a component of $\cF$ as $\AGL(2,q)$ acts transitively on the set of all lines of $\AG(2,q)$. Since we have $q^2+q$ such lines this would yield $\deg(\cF)\ge q^2+q$, a contradiction. Moreover, $\ell_\infty$ is not a component of $\cC_\lambda$ since $\cL$ is the unique curve in $\Lambda$ containing $\ell_\infty$ as a component whereas $\cD$ is an irreducible plane curve of $\deg(\cD)>1$.

Now take a line $\ell$ of $\AG(2,q)$. The stabilizer of $\ell$ in $\AGL(2,q)$ has order $q^2(q-1)(q^3-q)/(q^2+q)=(q-1)^2q$ and induces on $\ell$ a permutation group $\Sigma$ of order $q(q-1)$. Since $\Sigma$ is an automorphism of $\ell$, it has one fixed point $P_\infty=\ell\cap \ell_\infty$ and one nontrivial short orbit consisting of all points of $\ell\cap \AG(2,\mathbb{F}_q)$ other than $P_\infty$, while the other $\Sigma$-orbits are long of length $q^2-q$. Since $\deg(\cF)<q^2-q$ and $\ell$ is not a component of $\cF$, the B\'ezout theorem yields that one of the following cases occurs
 $$\cF\cap \ell=\begin{cases}
 \ell\cap \ell_\infty=\{P_\infty\},\\
 \ell\cap\AG(2,q),\\
 \ell\cap \PG(2,q).
 \end{cases}
 $$
Observe that when one of the above cases occurs for line $\ell$ then  the same case does for all lines of $\AG(2,q)$, as the stabilizer of $P_\infty$ in $\AGL(2,q)$ acts transitively on the set of the $q$ lines of $\AG(2,q)$ passing through $P_\infty$, and $\AGL(2,q)$ acts transitively on the points of $\ell_\infty\cap \AG(2,q)$.

In the first case, for every line of $\AG(2,q)$ passing through $P_\infty$ we have $I(P_\infty, \cF\cap \ell)=\deg(\cF)$.  Let $\mu(P_\infty)\ge 1$ be the multiplicity of $P_\infty$ as a point of $\cF$. Then $\mu(P_\infty)\le \deg \cF$ and if equality holds then $\cF$ splits into lines through $P_\infty$ whereas we have already shown that no line of $\PG(2,q)$ is a component of $\cF$. Therefore $\mu(\cF)<deg(\cF)$, and hence of the $q$ lines of $\AG(2, q)$ passing through $P_\infty$ is a tangent to $\cF$ at $P_\infty$. Therefore, $\mu(P_\infty)\geq q$. Since this holds true for every point in $\ell_\infty \cap \AG(2,q)$, the B\'ezout theorem yields that $\deg(\cF)\geq q(q+1)>q^2-q$, a contradiction.

In the second and third case, for every point $P\in \AG(2,q)$, either $\mu(P)\ge q+1$, or each line of $\AG(2,q)$ passing through $P$ is a tangent to $\cF$ at $P$. Therefore, $\cF$ has $q$ singular points of multiplicity $\ge q$ on any line of $\AG(2,q)$ passing through $P$. As we have already shown,  $\ell$ is not a component of $\cF$. Therefore, the B\'ezout theorem yields $\deg(\cF)\ge q^2>q^2-q$, a contradiction.
\end{proof}
From Theorems \ref{the030921} and \ref{the040921}, we also obtain a complete solution of Question(iii).
\begin{theorem}
\label{the050921} The $\AGL(2,q)$-invariant irreducible plane curves of degree $q^3-q^2=d(\AGL(2,q))$ are those of affine equation
\begin{equation}\label{eq:dfbcA}
 \frac{(X^{q^3}-X)(Y^{q}-Y)-(Y^{q^3}-Y)(X^{q}-X)}{(X^{q^2}-X)(Y^q-Y)-(Y^{q^2}-Y)(X^q-X)}-\lambda = 0.
\end{equation}
with $\lambda \in K$.
\end{theorem}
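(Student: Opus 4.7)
The plan is to feed any $\AGL(2,q)$-invariant irreducible curve $\cD$ of degree $d(\AGL(2,q))=q^3-q^2$ through the B\'ezout-pencil machinery already developed in the proof of Theorem~\ref{the040921}, the only new observation being that when degrees match, containment forces equality.

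Let $\Lambda$ denote the $\AGL(2,q)$-fixed pencil introduced in that proof, generated by the DGZ curve $\cF_{3,1}$ and the totally reducible curve $\cL\colon Z^{q^3-q^2}=0$. Every member $\cC_\mu$ of $\Lambda$ has degree $q^3-q^2$, is $\AGL(2,q)$-invariant, and -- writing $F(X,Y)$ for the polynomial defining $\cF_{3,1}$ in the affine chart $Z\ne 0$, so that $F(X,Y)$ equals the quotient appearing on the left-hand side of \eqref{eq:dfbc} with $n=3$, $m=1$ -- has affine equation $F(X,Y)=\mu$, $\mu\in K\cup\{\infty\}$, the value $\mu=\infty$ recovering $\cL$.

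Now let $\cD$ be an $\AGL(2,q)$-invariant irreducible plane curve of degree $q^3-q^2$. Pick $P\in\cD$ lying in a long $\AGL(2,q)$-orbit and not a base point of $\Lambda$, and let $\cC_\mu\in\Lambda$ be the unique member through $P$. As both $\cD$ and $\cC_\mu$ are $\AGL(2,q)$-invariant, the full $\AGL(2,q)$-orbit of $P$ sits in $\cD\cap\cC_\mu$. If $\cD$ and $\cC_\mu$ had no common component, B\'ezout would force
\[
q^3(q-1)^2(q+1) = |\AGL(2,q)| \le \deg(\cD)\cdot\deg(\cC_\mu) = (q^3-q^2)^2 = q^4(q-1)^2,
\]
contradicting $q+1>q$. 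Hence $\cD$ is a component of $\cC_\mu$, and the equality $\deg(\cD)=\deg(\cC_\mu)$ forces $\cD=\cC_\mu$.

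The value $\mu=\infty$ is ruled out by the irreducibility of $\cD$, so $\mu\in K$; renaming $\mu$ as $\lambda$ yields the affine equation stated in the theorem. The only subtle point is the strict B\'ezout inequality above, which is what keeps the argument of Theorem~\ref{the040921} alive at the borderline degree $d(\AGL(2,q))$.
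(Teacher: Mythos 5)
Your argument is correct and is essentially the route the paper takes: Theorem \ref{the050921} is presented there as an immediate corollary of Theorems \ref{the030921} and \ref{the040921}, whose proofs run exactly your orbit-plus-B\'ezout computation on the $\AGL(2,q)$-fixed pencil generated by $\cF_{3,1}$ and $Z^{q^3-q^2}$, with the strict inequality $|\AGL(2,q)|=q^3(q-1)^2(q+1)>(q^3-q^2)^2$ forcing $\cD$ to be a component, hence the whole, of a pencil member. The only caveat, which you share with the paper, is that the converse implicit in the statement --- that $\cC_\lambda$ is irreducible for every $\lambda\in K$ --- is not verified.
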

\subsection{$\overline{\AGL}(2,q)$-invariant curves}
\label{ssaglbis}
We will see that the main results stated in Section \ref{ssagl} hold true for $\overline{\AGL}(2,q)$-invariant curves. It should be noted however that one cannot simply obtain such results by dual arguments since the dual curve of an irreducible plane curve may not exist, as we work in positive characteristic where nonclassical plane curves can occur, and even in the case when the dual curve exists its degree is different, in general.

Let $P_0$ be the unique fixed point of $\overline{\AGL}(2,q)$. Then $\overline{\AGL}(2,q)$ induces a permutation group on the lines through $P_0$, and the kernel $N$ of this permutation representation is the semidirect product of en elementary abelian group $T$ of order $q^2$ (the dual translation group with center $P)$ by a cyclic complement $C_{q-1}$ of order $q-1$.
\begin{lemma}
\label{lem050921}  If a plane curve $\cC$ of degree smaller than $q^3-q^2$ is $\overline{\AGL}(2,q)$-invariant then it splits into lines through $P_0$.
\end{lemma}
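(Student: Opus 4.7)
My plan is to mimic the proof of Lemma~\ref{lem4set21}: construct an $\overline{\AGL}(2,q)$-fixed pencil $\Lambda'$ of degree $q^3-q^2$, use B\'ezout to force any irreducible component of a low-degree $\overline{\AGL}(2,q)$-invariant curve to be a component of some member of $\Lambda'$, and then analyze the reducible members of $\Lambda'$ to identify the components as lines through $P_0$.

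Fix $P_0=(0:0:1)$ and set $N(X,Y,Z)=\prod_{(\alpha,\beta)\in\mathbb{F}_q^{2}}(\alpha X+\beta Y+Z)$, the degree-$q^{2}$ form vanishing on the $q^{2}$ lines of $\PG(2,q)$ not through $P_0$. A direct calculation, in the spirit of Section~\ref{ssagl}, shows that for $\sigma\in \overline{\AGL}(2,q)$ represented by a matrix with lower-right entry $g\in \mathbb{F}_q^{*}$, one has $N\circ\sigma = g\cdot N$; hence $N^{q-1}$ is a strictly $\overline{\AGL}(2,q)$-invariant form of degree $q^{3}-q^{2}$. Take $\Lambda'$ to be the pencil generated by the DGZ curve $\cF_{3,1}$ (irreducible and $\PGL(3,q)$-invariant of degree $q^{3}-q^{2}$) and $\cN^{q-1}$. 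Every member $\cC_\lambda$ is $\overline{\AGL}(2,q)$-invariant of degree $q^{3}-q^{2}$, and since $\cF_{3,1}$ is irreducible and shares no component with $\cN^{q-1}$, the pencil has no fixed component and its generic member is irreducible.

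Next, given $\cC$ as in the statement and any irreducible component $\cD$ of $\cC$, I pick a general point $P\in \cD$ lying in an $\overline{\AGL}(2,q)$-orbit of maximal length $|\overline{\AGL}(2,q)|=q^{3}(q^{2}-1)(q-1)$ and avoiding the base locus of $\Lambda'$. Let $\cC_\lambda\in \Lambda'$ be the member through $P$. If $\cD$ were not a component of $\cC_\lambda$, then $\overline{\AGL}(2,q)\cdot P\subseteq \cD\cap \cC_\lambda$ and B\'ezout would give
$$q^{3}(q^{2}-1)(q-1)\le \deg(\cD)(q^{3}-q^{2}),$$
forcing $\deg(\cD)\ge q^{3}-q$, which contradicts $\deg(\cD)\le d<q^{3}-q^{2}$. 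Hence $\cD$ is a component of some $\cC_\lambda$, and since the generic member of $\Lambda'$ is irreducible of degree $q^{3}-q^{2}>\deg(\cD)$, the member $\cC_\lambda$ must be one of the (finitely many) reducible special members of $\Lambda'$.

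The main obstacle is the concluding step: the explicit factorization analysis of the reducible members of $\Lambda'$. The distinguished member $\cC_\infty=\cN^{q-1}$ decomposes into the $q^{2}$ lines of $\PG(2,q)$ not through $P_0$ (each with multiplicity $q-1$); for the other reducible members, a factorization argument in the style of Lemma~\ref{pencilB}, combined with the $\overline{\AGL}(2,q)$-orbit structure on $\PG(2,q)$, will identify all irreducible components and force that the only ones which can appear as components of an $\overline{\AGL}(2,q)$-invariant curve of degree smaller than $q^{3}-q^{2}$ are the $q+1$ lines of $\PG(2,q)$ through $P_0$. The technical heart of the proof lies precisely in this combinatorial/geometric enumeration of the degeneration loci of $\Lambda'$.
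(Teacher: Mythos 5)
Your route is genuinely different from the paper's. The paper does not use a pencil at all: it takes the kernel $N\le\overline{\AGL}(2,q)$ of the action on the lines through $P_0$ (the group of perspectivities with center $P_0$, of order $q^2(q-1)=q^3-q^2$), observes that the $N$-orbit of a suitably generic point $Q\in\cC$ consists of $q^2(q-1)$ points all lying on the line $P_0Q$, so that B\'ezout forces this line to be a component of $\cC$, and then concludes that $P_0$ has multiplicity $\deg(\cC)$ on $\cC$. Your proposal, by contrast, has two genuine gaps. The first is local: the orbit $\overline{\AGL}(2,q)\cdot P$ is contained in $\cC\cap\cC_\lambda$, not in $\cD\cap\cC_\lambda$, because the group only permutes the components of $\cC$ and need not preserve $\cD$; the displayed inequality $|\overline{\AGL}(2,q)|\le\deg(\cD)(q^3-q^2)$ is therefore unjustified. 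The correct B\'ezout bound involves $\deg(\cC)$ and only yields that \emph{some} component of $\cC$ is a component of $\cC_\lambda$, which is weaker than what your next step needs.

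The second gap is decisive: the ``concluding step'' you defer is the entire content of the lemma, and as set up it cannot be completed. The degenerate member $\cC_\infty$ of your own pencil is supported on the $q^2$ lines of $\PG(2,q)$ \emph{not} through $P_0$; since $\overline{\AGL}(2,q)$ permutes these $q^2$ lines transitively, their union is an $\overline{\AGL}(2,q)$-invariant curve of degree $q^2<q^3-q^2$ (for $q>2$) none of whose components passes through $P_0$. Hence no enumeration of the reducible members of $\Lambda'$ can force every component of a low-degree invariant curve to be a line through $P_0$, and your planned endgame is not realizable. (This example in fact shows that the lemma requires a hypothesis excluding such configurations --- for instance restricting to irreducible $\cC$, which is how the lemma is actually applied later; it also pinpoints the weak spot in the paper's own argument, namely the claim that the nontrivial elements of $N$ have finitely many fixed points, which fails for the homologies in $N$, whose axes are exactly these $q^2$ lines.) As written, your proposal establishes only a flawed reduction to the pencil and leaves the substantive part unproved.
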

\begin{proof} Take any point $Q$ of $\cC$ which is not fixed by any non-trivial element of $N$. Then the
$N$-orbit of $Q$ consists of $q^2(q-1)$ collinear points on a line $\ell$ through $P$ each of them lying on $\cC$. Since $\deg(\cC)<q^3-q^2=q^2(q-1)$, this implies that $\ell$ is a component of $\cC$.  Moreover, the number of fixed points of elements in $N$ is finite. Therefore, there are infinitely many lines $\ell$ through $P_0$ such that $I(P_0,\cC\cap \ell)=\deg(\cC)$. This implies that $P_0$ is a singular point of $\cC$ of multiplicity $\deg(\cC)$. Thus, $\cC$ splits into lines through $P_0$.
\end{proof}
Lemma \ref{lem050921} solves Question (i) since the DGZ curve is an $\overline{\AGL}(2,q)$-invariant curve of degree $q^3-q^2$.
\begin{theorem}
\label{the070921} $d(\overline{\AGL}(2,q))=q^3-q^2$.
\end{theorem}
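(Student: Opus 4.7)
The plan is to deduce the equality $d(\overline{\AGL}(2,q))=q^3-q^2$ directly from Lemma \ref{lem050921} combined with the existence of a single concrete $\overline{\AGL}(2,q)$-invariant irreducible curve realizing that degree; each of the two inequalities is a short step once these ingredients are in place.

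For the lower bound $d(\overline{\AGL}(2,q))\ge q^3-q^2$, I would argue by contradiction: suppose $\cC$ is an $\overline{\AGL}(2,q)$-invariant irreducible plane curve with $2\le \deg(\cC)<q^3-q^2$ (lines being excluded by the definition of $d(G)$). Then Lemma \ref{lem050921} forces $\cC$ to split into lines through the fixed point $P_0$, contradicting irreducibility. For the matching upper bound, I would exhibit an explicit irreducible $\overline{\AGL}(2,q)$-invariant plane curve of degree $q^3-q^2$. The natural candidate, flagged in the text immediately preceding the statement, is the DGZ curve $\cF_{3,1}$ of equation (\ref{eq:dfbc}): it is irreducible of degree $q^3-q^2$, and one checks that $\overline{\AGL}(2,q)$ lies inside its group of linear automorphisms. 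Combining the two bounds yields the claimed equality.

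The only nontrivial ingredient is thus the invariance of the example under $\overline{\AGL}(2,q)$, i.e., verifying that the stabilizer in $\PGL(3,q)$ of the relevant point $P_0$ preserves the defining equation (\ref{eq:dfbc}). This is the main obstacle in the argument, since Lemma \ref{lem050921} already handles the hard structural content, and the verification of the invariance reduces to a direct computation using the same type of determinantal identities already employed in the proof of Theorem \ref{the050921} in Section \ref{ssagl}.
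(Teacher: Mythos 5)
Your proposal is correct and follows essentially the same route as the paper, which dispatches the theorem in one sentence by combining Lemma \ref{lem050921} (lower bound via the splitting-into-lines dichotomy) with the fact that the DGZ curve is an $\overline{\AGL}(2,q)$-invariant irreducible curve of degree $q^3-q^2$. The only point where you overcomplicate matters is the invariance check: since the DGZ curve is $\PGL(3,q)$-invariant (as established in Section \ref{rr} and Section \ref{secpgl3}) and $\overline{\AGL}(2,q)\le \PGL(3,q)$, no fresh determinantal computation is needed.
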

\noindent
Question(ii) also has the same solution as for $\AGL(2,q)$.
\begin{theorem}
\label{the070921A}  $\varepsilon(\overline{\AGL}(2,q))=q^2-q$.
\end{theorem}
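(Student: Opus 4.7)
The plan is to mirror the strategy of Theorem~\ref{the040921}, adapted to $\overline{\AGL}(2,q)$. For the upper bound, I would use the dual DGZ curve $\cF_{3,2}$: it is $\PGL(3,q)$-invariant, hence $\overline{\AGL}(2,q)$-invariant, and has degree $q^3-q$. Combined with $d(\overline{\AGL}(2,q))=q^3-q^2$ from Theorem~\ref{the070921}, this gives
\[
\varepsilon(\overline{\AGL}(2,q))\le (q^3-q)-(q^3-q^2)=q^2-q.
\]

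For the reverse inequality the main step is to construct an $\overline{\AGL}(2,q)$-fixed pencil $\Lambda$ of curves of degree $q^3-q^2$. Place the fixed point $P_0$ of $\overline{\AGL}(2,q)$ at $(0:0:1)$; then every $\alpha\in\overline{\AGL}(2,q)$ admits a representative matrix $A$ with entries in $\mathbb{F}_q$ whose third column is $(0,0,1)^T$. A direct computation shows that the induced action on linear forms preserves the normalization ``$Z$-coefficient equal to $1$'', so the $q^2$ forms
\[
L_{a,b}(X,Y,Z)=aX+bY+Z,\qquad (a,b)\in\mathbb{F}_q^2,
\]
defining the lines of $\PG(2,q)$ not through $P_0$ are permuted by $\overline{\AGL}(2,q)$ \emph{without any scalar factor}. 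Their product $V=\prod_{(a,b)\in\mathbb{F}_q^2}L_{a,b}$ is thus a genuinely $\overline{\AGL}(2,q)$-invariant form of degree $q^2$, and $V^{q-1}$ is an invariant form of degree $q^3-q^2$ matching $\deg\cF_{3,1}$. I would then take $\Lambda$ to be the pencil generated by the DGZ curve $\cF_{3,1}$ and the curve of equation $V^{q-1}=0$.

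Now suppose for a contradiction that $\cD$ is an $\overline{\AGL}(2,q)$-invariant irreducible plane curve of degree $d$ with $q^3-q^2<d<q^3-q$. Pick a point $P\in\cD$ lying in a $\overline{\AGL}(2,q)$-orbit of length $|\overline{\AGL}(2,q)|=q^3(q-1)^2(q+1)$ and outside the (finite) base locus of $\Lambda$; such $P$ exists since $\cD$ is irreducible of degree $\ge 2$ and is therefore not contained in the fixed locus of any non-trivial $\alpha\in\overline{\AGL}(2,q)$. The unique $\cC_\lambda\in\Lambda$ through $P$ then contains the whole orbit of $P$ by $G$-invariance, so $|\cD\cap\cC_\lambda|\ge q^3(q-1)^2(q+1)$. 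Since $d>\deg\cC_\lambda=q^3-q^2$, $\cD$ cannot be a component of $\cC_\lambda$, and the B\'ezout theorem forces
\[
d\cdot (q^3-q^2)\ge q^3(q-1)^2(q+1),
\]
that is, $d\ge q(q-1)(q+1)=q^3-q$, contradicting the assumption. Hence $\varepsilon(\overline{\AGL}(2,q))\ge q^2-q$ and equality holds.

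The main technical point I expect to dwell on is the scalar-free invariance of $V$; the observation that every $\alpha\in\overline{\AGL}(2,q)$ has a representative matrix whose third column is $(0,0,1)^T$ is the crux, as it prevents the usual semi-invariance issue that required raising products of lines to the $(q-1)$-th power in Section~\ref{ssagl}. A secondary verification is that the base locus of $\Lambda$ is finite, which follows because $\cF_{3,1}$ is irreducible of degree $\ge 2$ and so shares no component with the totally reducible curve $V^{q-1}=0$.
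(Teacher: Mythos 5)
Your argument is correct, and it is built on the same $\overline{\AGL}(2,q)$-fixed pencil as the paper: your curve $V^{q-1}=0$ is precisely the paper's curve $\mathcal F$ (the $q^2$ lines of $\PG(2,q)$ missing $P_0$, each taken $q-1$ times), and the second generator is again the DGZ curve $\cF_{3,1}$. The difference is in the endgame. The paper uses the B\'ezout-versus-orbit estimate only to conclude that a putative $\cD$ of degree $<q^3-q$ must be a \emph{component} of some $\cC_\lambda$, and then spends the bulk of the proof on the residual component: by Lemma \ref{lem050921} it splits into $\mathbb{F}_q$-rational lines through $P_0$, all with the same multiplicity divisible by $q-1$, forcing degree at least $q^2-q$ and hence a contradiction. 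You instead note that for $q^3-q^2<\deg\cD<q^3-q$ the component alternative is impossible on degree grounds alone, since $\deg\cD>\deg\cC_\lambda=q^3-q^2$, so B\'ezout applies directly and gives $\deg\cD\ge|\overline{\AGL}(2,q)|/(q^3-q^2)=q^3-q$. This is shorter and bypasses Lemma \ref{lem050921} at this stage; what the paper's longer route buys is control over which $\cC_\lambda$ are irreducible, which is then reused for the classification in Theorem \ref{the070921B}. Two minor remarks: (a) your normalization argument for the scalar-free invariance of $V$ is sound and matches the paper's computation that $(Y^{q-1}-Z^{q-1})YZ$ and the Moore determinant each pick up the same factor $\det(A)$; (b) to know that \emph{every} member of the pencil is $\overline{\AGL}(2,q)$-invariant you should also record that the homogeneous DGZ form transforms with trivial character, which holds because it is a ratio of two Moore determinants each multiplied by $\det(A)$ under the substitution --- a one-line check that the paper likewise leaves implicit.
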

\begin{proof} Fix a reference system $(X,Y)$ in the plane $\AG(2,q)$ such that $P_0$ be the infinite point $(1,0,0)$ of the $X$-axis. Then the lines in $\AG(2,\mathbb{F}_q)$ not incident with $P_0$ have homogeneous equation  $X+aY+bZ=0$ with $a,b\in \mathbb{F}_q$. Therefore, the curve $\cL$ whose components are the lines through $P_0$
has homogeneous equation $(Y^{q-1}-Z^{q-1})YZ=0$.

Thus, the totally reducible plane curve $\cF$  whose component are exactly 
the lines  off $P_0$ each taken $q-1$ times has affine equation
 $$\frac{\left(\begin{vmatrix} X & X^{q} & X^{q^2} \\ Y & Y^{q} & Y^{q^2} \\ Z & Z^q & Z^{q^2} \end{vmatrix}\right)^{q-1}}{((Y^{q-1}-Z^{q-1})YZ)^{q-1}}=0.$$
In particular, 
$ \mathcal{F}$ is an $\overline{\AGL}(2,q)$-invariant curve of degree $q^3-q^2$.
In fact, if $\alpha\in \overline{\AGL}(2,q)$ and $[X,Y,Z]=A[\bar{X},\bar{Y},\bar{Z}]$ with a regular matrix $A=(a_{ij})$ then $X=a_{11}\bar{X}$, $Y=a_{22}\bar{Y}+a_{23}\bar{Z}$ and $Z=a_{32}\bar{Y}+a_{33}\bar{Z}$. By a straightforward computation $(Y^{q-1}-Z^{q-1})YZ= \det(A)(\bar{Y}^{q-1}-\bar{Z}^{q-1})\bar{Y}\bar{Z}$  whence the claim follows.
This time we use the $\overline{\AGL}(2,q)$-fixed pencil $\Lambda$ generated by 
$\mathcal{F}$ and the DGZ curve $\cF_{3,1}$.  Let $\cD$ be an $\overline{\AGL}(2,q)$-invariant irreducible plane curve whose degree is smaller than $q^3-q$. From the proof of Lemma \ref{lem4set21}, $\cD$ is a component of a curve $\cC_\lambda$ in $\Lambda$. Therefore, there is a component $\cF$ of $\cC_\lambda$ such that $\cC_\lambda$ splits into $\cD$ and $\cF$. Since both $\cD$ and $\cC_\lambda$ are $\overline{\AGL}(2,q)$-invariant, $\cF$ is also $\overline{\AGL}(2,q)$-invariant. Moreover,
$\deg(\cF)<q^2-q$. From Lemma \ref{lem050921}, $\cF$ splits into lines through $P_0$. If one of these lines was not defined over $\mathbb{F}_q$, then that line would have at least $q^2-q$ images under the action of $\overline{\AGL}(2,q)$. But this would imply $\deg(\cF)\ge q^2-q$ , a contradiction. Since $\overline{\AGL}(2,q)$ acts transitively on the set of the $q+1$ lines of $\PG(2,q)$ passing through $P_0$, each such line is a component of $\cF$ with same multiplicity $r$. Thus, $\cF$ has homogeneous equation $(\prod_{\omega\in \mathbb{F}_q} (Y-\omega Z)Z)^r=0$. Since $\cF$ is $\overline{\AGL}(2,q)$-invariant, $q-1$ divides $r$. But this implies
$\deg(\cF)\ge q^2-q$, a contradiction.
\end{proof}
From Theorems \ref{the070921} and \ref{the070921A}, we also obtain a complete solution of Question(iii).
\begin{theorem}
\label{the070921B} The $\overline{\AGL}(2,q)$-invariant irreducible plane curves of degree $q^3-q^2=d(\AGL(2,q))$ are those of affine equation
\begin{equation*}\label{eq:dfbcB}
 \frac{(X^{q^3}-X)(Y^{q}-Y)-(Y^{q^3}-Y)(X^{q}-X)}{(X^{q^2}-X)(Y^q-Y)-(Y^{q^2}-Y)(X^q-X)}-\lambda \frac{(X^{q^2}-X)(Y^q-Y)-(Y^{q^2}-Y)(X^q-X)}{(Y^q-Y)^{q+1}}= 0.
\end{equation*}
with $\lambda \in K\setminus \{1\}$.
\end{theorem}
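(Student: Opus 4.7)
The plan is to adapt the strategy used for Theorem \ref{the050921}: exhibit an $\overline{\AGL}(2,q)$-fixed pencil $\Lambda$ of plane curves of degree $q^3-q^2$ containing the DGZ curve, and then use a B\'ezout argument to force every $\overline{\AGL}(2,q)$-invariant irreducible plane curve of that degree to lie in $\Lambda$.

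To construct $\Lambda$, I would take as generators the DGZ curve $\cF_{3,1}$, which is $\PGL(3,q)$-invariant of degree $q^3-q^2$, together with the totally reducible curve $\mathcal F$ introduced in the proof of Theorem \ref{the070921A}, which is likewise $\overline{\AGL}(2,q)$-invariant of the same degree. By the classical pencil principle recalled in the introduction, every member $\cC_\lambda$ of $\Lambda$ is then an $\overline{\AGL}(2,q)$-invariant plane curve of degree $q^3-q^2$.

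Let $\cD$ now be an $\overline{\AGL}(2,q)$-invariant irreducible plane curve of degree $q^3-q^2$. Since every non-identity element of $\overline{\AGL}(2,q)\le \PGL(3,K)$ has fixed locus contained in a finite union of points and lines, and $\cD$ is not a line, only finitely many points of $\cD$ lie in a short orbit or are base points of $\Lambda$. I would pick $P\in\cD$ outside this finite bad locus; its $\overline{\AGL}(2,q)$-orbit has full length $|\overline{\AGL}(2,q)|=q^3(q+1)(q-1)^2$ and is contained in $\cD\cap\cC_\lambda$, where $\cC_\lambda\in\Lambda$ is the unique member through $P$. If $\cD$ and $\cC_\lambda$ shared no common component, B\'ezout would give
\[
q^3(q+1)(q-1)^2 \le \deg(\cD)\deg(\cC_\lambda) = (q^3-q^2)\deg(\cD),
\]
forcing $\deg(\cD)\ge q^3-q$, contradicting $\deg(\cD)=q^3-q^2$. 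Hence $\cD$ is a component of $\cC_\lambda$, and since $\cD$ is irreducible with $\deg(\cD)=\deg(\cC_\lambda)$, we conclude $\cD=\cC_\lambda\in\Lambda$.

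The last step, which I expect to be the main technical obstacle, is to identify the abstract pencil $\Lambda=\{\cF_{3,1}+\mu\mathcal F:\mu\in K\cup\{\infty\}\}$ with the explicit family displayed in the theorem and to pin down the exceptional parameter. Using the factorization
\[
(X^{q^2}-X)(Y^q-Y)-(Y^{q^2}-Y)(X^q-X) = (Y^q-Y)(X^q-X)\bigl((X^q-X)^{q-1}-(Y^q-Y)^{q-1}\bigr),
\]
which was obtained in the proof of Theorem \ref{the070921A}, one rewrites the affine equation of $\mathcal F$ in terms of the ratios that appear in the displayed pencil equation; after clearing common polynomial factors with the DGZ polynomial expression of $\cF_{3,1}$, a linear reparameterization in $\mu$ brings $\cF_{3,1}+\mu\mathcal F=0$ into the form stated in the theorem. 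The single value $\lambda=1$ is then excluded because at that parameter the corresponding pencil member is forced to split into two $\overline{\AGL}(2,q)$-invariant divisors of smaller degree, hence fails to be irreducible.
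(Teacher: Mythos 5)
Your overall route is the paper's: Theorem \ref{the070921B} is stated there as an immediate consequence of Theorems \ref{the070921} and \ref{the070921A}, and the proof of Theorem \ref{the070921A} uses precisely the pencil $\Lambda$ you build (generated by the DGZ curve $\cF_{3,1}$ and the totally reducible curve $\mathcal F$ supported on the $q^2$ lines of $\PG(2,q)$ not through $P_0$, each taken $q-1$ times) together with the same orbit-length versus B\'ezout comparison, $q^3(q+1)(q-1)^2/(q^3-q^2)=q^3-q$. Your version is in fact slightly leaner: because you only treat curves of degree exactly $q^3-q^2=\deg\cC_\lambda$, the conclusion $\cD=\cC_\lambda$ follows at once from $\cD$ being an irreducible component of full degree, and you can bypass the complementary-component analysis (Lemma \ref{lem050921} and the multiplicity count) that the paper needs for the sharper statement $\varepsilon(\overline{\AGL}(2,q))=q^2-q$.

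The step you defer is, however, exactly where the write-up stops being a proof, and it is not a formality. First, $\mathcal F$ has affine equation $\bigl((X^{q^2}-X)(Y^q-Y)-(Y^{q^2}-Y)(X^q-X)\bigr)^{q-1}/(Y^q-Y)^{q-1}=0$, of degree $q^3-q^2$, whereas the second fraction in the displayed statement, $\bigl((X^{q^2}-X)(Y^q-Y)-(Y^{q^2}-Y)(X^q-X)\bigr)/(Y^q-Y)^{q+1}$, has total degree $0$ (it equals $w^q-w$ with $w=(X^q-X)/(Y^q-Y)$); so matching the abstract pencil $\langle\cF_{3,1},\mathcal F\rangle$ to the printed family is genuine work, and a "linear reparameterization in $\mu$" will not do it as stated. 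Second, you assert without computation that $\lambda=1$ is the unique affine parameter at which the member degenerates; nothing in your argument singles out that value. Third, the theorem is an equivalence, so you must also show that every member with $\lambda\in K\setminus\{1\}$ is irreducible, which neither your argument nor your sketch addresses. In fairness, the paper's own one-sentence proof leaves the same items implicit, but they are the only nontrivial content separating Theorem \ref{the070921B} from Theorems \ref{the070921} and \ref{the070921A}, so they need to be supplied.
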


\subsection{$\PGL(3,q)$-invariant curves}
\label{secpgl3} In Section \ref{rr} we have exhibited an infinite family of $\PGL(3,q)$-invariant plane curves containing the DGZ curve of degree $q^3-q^2$ and the dual DGZ curve of degree $q^3-q$. It may be noticed that the latter two curves are of the lowest degree in that family since the others have degree at least $q^4$.

We begin by constructing a $\PGL(3,q)$-fixed pencil. Then, we show that the curves in this pencil have degree $q^4-q$ apart from a few sporadic cases, including the DGZ and the dual DGZ curves.

Let $L_i(X,Y,Z)=0$, $i=1,\ldots, q^2+q+1 $ be the lines of $\PG(2,q)$, and  let  $\tilde{L}_j(X,Y,Z)=0$, $j=1,\ldots,q^4-q$  be the lines of $\PG(2,q^2)$ which are not defined over $\fq$.
For $$G(X,Y,Z)= \prod\limits_{j=1}^{q^4-q}\tilde{L}_j,\quad H(X,Y,Z)=  \prod\limits_{i=1}^{q^2+q+1} L_i$$ consider the pencil $\Lambda$ generated by the plane curves of homogeneous equations $G(X,Y,Z)=0$ and $H(X,Y,Z)^{q(q-1)}=0$, respectively. The following lemma collects some useful properties of $\Lambda$.
\begin{lem}\label{pencil} The pencil $\Lambda$ consists of all curves $\cC_\lambda$ with $\lambda \in K\cup \{\infty\}$ of homogeneous equation  $G(X,Y,Z)- \lambda H(X,Y,Z)^{q(q-1)}=0$.
Furthermore,
  \begin{enumerate}[\rm(i)]
  \item  $\Lambda$ is a $\PGL(3,q)$-fixed pencil.
  \item  each $\mathcal{C}_\lambda$ contains all points in $\PG(2,q^2)$.
  \item  each point $P\in \PG(2,q)$ is an ordinary singularity of $\mathcal{C}_\lambda$ with $\lambda \in K$, the tangent lines to $\cC_\lambda$ at $P$ being the $q^2-q$ lines through $P$ which are defined over $\fqq$ but over $\fq$.
  \item  every line $\ell$ of $\PG(2,q)$ meets $\mathcal{C}_\lambda$ with $\lambda \in K$  in the points of $\ell$ over $\fqq$, and
  \begin{equation*}
I\left(P, \ell \cap \mathcal{C}_\lambda\right)=
\begin{cases}
q^2-q, & \text{ if } P \in \PG(2,q)\\
q^2, & \text{ if } P \in \PG(2,q^2)\setminus  \PG(2,q).
\end{cases}
\end{equation*}
  \item the components of $\mathcal{C}_1$  are the DGZ curve  $\mathcal{F}_{3,1}$ taken $q$ times, and the dual DGZ curve $\mathcal{F}_{3,2}$.
  \end{enumerate}
    \end{lem}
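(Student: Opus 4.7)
The plan is to handle parts (i)--(iv) by direct orbit and multiplicity counts, saving (v) as the main obstacle.

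For (i), $\PGL(3,q)$ preserves setwise both $\{L_i\}$ and $\{\tilde L_j\}$, so under every $\alpha\in\PGL(3,q)$ each of $H$ and $G$ is multiplied by a scalar, and hence so is $H^{q(q-1)}$; thus any $K$-linear combination $\lambda G-\mu H^{q(q-1)}$ is sent to another such combination. For (ii), I show every $P\in\PG(2,q^2)$ satisfies $G(P)=H(P)=0$: the $\fq$-linear map $\fq^3\to\fqq$ sending $(\alpha,\beta,\gamma)\mapsto\alpha a+\beta b+\gamma c$ with $P=(a\!:\!b\!:\!c)$ has kernel of dimension at least $1$, so $P$ lies on at least one line of $\PG(2,q)$, giving $H(P)=0$; counting lines of $\PG(2,q^2)$ through $P$ and subtracting those defined over $\fq$ shows $P$ lies on a positive number of the $\tilde L_j$, giving $G(P)=0$.

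For (iii), at $P\in\PG(2,q)$ the multiplicity on $G$ equals the number of $\tilde L_j$ through $P$, namely $q^2-q$, while the multiplicity on $H^{q(q-1)}$ equals $(q+1)q(q-1)=q^3-q$. Since $q^2-q<q^3-q$, the lowest-degree local terms of $\lambda G-H^{q(q-1)}$ at $P$ come from $\lambda G$, so $\cC_\lambda$ has multiplicity exactly $q^2-q$ at $P$, with tangent cone the product of the pairwise distinct linear forms $\tilde L_j$ vanishing at $P$, whence $P$ is an ordinary singularity. For (iv), $\ell$ is one of the $L_i$, so $\ell$ divides $H$ and $H^{q(q-1)}|_\ell=0$; hence $\cC_\lambda\cap\ell$ is cut out by $G|_\ell$. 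Computing the zero divisor of $G|_\ell$ on $\ell$ by counting, for each $Q\in\ell\cap\PG(2,q^2)$, the number of $\tilde L_j$ through $Q$ yields the multiplicities $q^2-q$ if $Q\in\PG(2,q)$ and $q^2$ otherwise; the total $(q+1)(q^2-q)+(q^2-q)q^2=q^4-q=\deg G$ confirms exhaustion.

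The main obstacle is (v). Writing the homogeneous equation of $\cF_{n,m}$ as the quotient of Moore-like determinants
\[
F_{n,m}(X,Y,Z)=\frac{\det\!\begin{pmatrix} X & Y & Z\\ X^{q^m} & Y^{q^m} & Z^{q^m}\\ X^{q^n} & Y^{q^n} & Z^{q^n}\end{pmatrix}}{D(X,Y,Z)},\qquad D=\det\!\begin{pmatrix} X & Y & Z\\ X^{q} & Y^{q} & Z^{q}\\ X^{q^2} & Y^{q^2} & Z^{q^2}\end{pmatrix},
\]
and invoking the classical Dickson-type identities that $D$ equals a nonzero scalar multiple of $H$ and that the Moore determinant over $\fqq$ equals a scalar multiple of $GH$, the plan is to derive the polynomial identity
\[
G-H^{q(q-1)}=c\cdot F_{3,1}^{\,q}\,F_{3,2}
\]
by manipulating Frobenius-twisted rows. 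Since both sides have degree $q^4-q$ and $\cF_{3,1},\cF_{3,2}$ are distinct irreducible curves, this forces the decomposition $\cC_1=q\cF_{3,1}+\cF_{3,2}$. As a fallback, one can argue that every $\PGL(3,q)$-invariant component of $\cC_1$ lies among the known $\PGL(3,q)$-invariant irreducible curves of degree $\leq q^4-q$, and then use the local multiplicity of $\cC_1$ at a $\PG(2,q)$-point from (iii) together with the degree relation $q(q^3-q^2)+(q^3-q)=q^4-q$ to pin down the multiplicities.
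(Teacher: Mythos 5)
Your treatment of (ii)--(iv) matches the paper's proof in substance: base points of both generators cover $\PG(2,q^2)$, the local multiplicity comparison $q^2-q<(q+1)q(q-1)$ gives (iii), and restricting to an $\fq$-rational line (on which $H^{q(q-1)}$ vanishes identically) reduces (iv) to counting the lines $\tilde L_j$ through each point, with the total $q^4-q$ confirming the count. For (v) you are actually more explicit than the paper, which disposes of it ``by direct inspection'': writing $D_{a,b,c}$ for the $3\times 3$ minor of the $5\times 3$ Frobenius-power matrix with rows indexed by $a,b,c\in\{0,1,2,3,4\}$, your claimed identity is (after clearing denominators and using $D_{0,1,2}^{q}=D_{1,2,3}$, $D_{0,1,3}^{q}=D_{1,2,4}$, $D_{0,1,2}^{q^2}=D_{2,3,4}$) exactly the three-term Pl\"ucker relation
$D_{0,2,4}D_{1,2,3}-D_{0,1,2}D_{2,3,4}=\pm D_{1,2,4}D_{0,2,3}$,
so the factorization $\cC_1=q\,\cF_{3,1}+\cF_{3,2}$ does follow, with the nonvanishing of the left side guaranteed because $G$ is squarefree while $H^{q(q-1)}$ is not.

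The one genuine gap is in (i). You show only that $G$ and $H^{q(q-1)}$ are each multiplied by \emph{some} scalar, which proves that $\PGL(3,q)$ permutes the curves of $\Lambda$ among themselves; but in this paper a ``$G$-fixed pencil'' means that \emph{each} curve $\cC_\lambda$ is individually invariant, and that is what the subsequent B\'ezout argument (Theorem \ref{the22set21}) uses, since it needs the entire $\PGL(3,q)$-orbit of a point $P$ to lie on the single curve $\cC_\lambda$ through $P$. For that you must check that the two scalars coincide. The paper does this by identifying $H$ with the Moore determinant $D_{0,1,2}$ and $GH$ with $D_{0,2,4}$: since the matrix $A$ of a projectivity in $\PGL(3,q)$ has entries in $\fq$, both determinants transform by the factor $\det(A)$, so $G$ is exactly invariant, and $H^{q(q-1)}$ is exactly invariant because $\det(A)^{q-1}=1$. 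This is a short repair, but without it your proof of (i) establishes a strictly weaker statement than the lemma asserts.
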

    \begin{proof} (i) For a projectivity $\alpha\in \PGL(3,q)$ associated with a regular $3\times 3$-matrix $A=(a_{ij})$, let $[\bar{X},\bar{Y},\bar{Z}]$ be the vector such that $[X,Y,Z]$ is the product of $A$ by $[\bar{X},\bar{Y},\bar{Z}]$. Since
$$H(X,Y,Z)=\begin{vmatrix} X & X^{q} & X^{q^2} \\ Y & Y^{q} & Y^{q^2} \\ Z & Z^q & Z^{q^2} \end{vmatrix},$$
we have $H(X,Y,Z)=\det(A) H(\bar{X},\bar{Y},\bar{Z})$. Furthermore, $\det(A)\in \fq^*$ yields $\det(A)^{q-1}=1$. Thus $H(X,Y,Z)^{q(q-1)}=H(\bar{X},\bar{Y},\bar{Z})^{q(q-1)}$. Similarly, since
$$G(X,Y,Z)H(X,Y,Z)=\begin{vmatrix} X & X^{q^2} & X^{q^4} \\ Y & Y^{q^2} & Y^{q^4} \\ Z & Z^{q^2} & Z^{q^4} \end{vmatrix}$$
we have $G(X,Y,Z)H(X,Y,Z)=\det(A) G(\bar{X},\bar{Y},\bar{Z})H(\bar{X},\bar{Y},\bar{Z})$. This together with $H(X,Y,Z)=\det(A)H(\bar{X},\bar{Y},\bar{Z})$ yield $G(X,Y,Z)=G(\bar{X},\bar{Y},\bar{Z})$. Therefore,
$\Lambda$ is a $\PGL(3,q)$-fixed pencil.

(ii) Since both generators of $\Lambda$ contain all points of $\PG(2,q)$, each of such points is a based point of $\Lambda$. Thus, they are common points of the curves in $\Lambda$.

(iii) The claim clearly holds for $\cC_0$. Moreover, since any point $P\in \PG(2,q)$ is incident with exactly $q+1$ lines in $\PG(2,q)$, every such point $P$ is a point of $C_{\infty}$ with multiplicity $(q+1)q(q-1)>q^2-q$. Therefore, the claim holds true in $\Lambda$ for every $\cC_\lambda$, with the unique exception for $\lambda=\infty$.

 (iv) By (ii) and (iii), it remains to show that $I\left(P, \ell \cap \mathcal{C}_\lambda\right)=q^2$ for $P \in \PG(2,q^2)\setminus  \PG(2,q)$. We first do it for $\cC_0$.
 Through $P$ there are exactly $q^2+1$ lines in $\PG(2,q^2)$, just one of them, say $\ell$, is defined over $\fq$ and hence is not a component of $\cC_0$. Thus $I(P, \ell \cap \mathcal{C}_0)=\sum_{i=1}^{q^2}I(P, \ell \cap \tilde{L}_i)=q^2.$ Since $\ell$ is a component of $\cC_\infty$ this, holds true for any curve in $\Lambda$ for $\lambda \in K$.

 (v) By direct inspection.

 \end{proof}
\begin{theorem}
\label{the22set21} Every $\PGL(3,q)$-invariant irreducible plane curve of degree smaller than $q^4-q^2$ is a component of a unique curve in $\Lambda$.
\end{theorem}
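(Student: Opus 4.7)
The plan is to run a B\'ezout-style argument against the $\PGL(3,q)$-fixed pencil $\Lambda$ of Lemma \ref{pencil}. The whole proof is driven by the numerical identity
$$|\PGL(3,q)|=(q^3-1)q^3(q^2-1)=(q^4-q)(q^4-q^2),$$
which says exactly that the threshold $q^4-q^2$ in the statement equals $|\PGL(3,q)|/\deg(\cC_\lambda)$, so a full free orbit of $\PGL(3,q)$ forces $\deg(\cD)\ge q^4-q^2$.

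First I would choose a point $P\in\cD$ generic in two senses: (a) $P$ is not a base point of $\Lambda$, and (b) the stabilizer of $P$ in $\PGL(3,q)$ is trivial. The base locus is contained in $\cC_0\cap \cC_\infty$, which is finite because $\cC_0$ and $\cC_\infty$ share no common component (their components are the lines of $\PG(2,q^2)$ not defined over $\fq$ and the lines defined over $\fq$, respectively). The locus of points with nontrivial $\PGL(3,q)$-stabilizer is the finite union $\bigcup_{\alpha\ne 1}\mathrm{Fix}(\alpha)$, each summand a proper linear subvariety of $\PG(2,K)$. Since $\PGL(3,q)$ is transitive on the lines of $\PG(2,q)$, no such line is $G$-invariant, so $\cD$ has degree at least two and in particular cannot be contained in any of these subvarieties; hence a generic $P\in \cD$ satisfies both (a) and (b).

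Let $\cC_\lambda$ be the unique curve of $\Lambda$ through $P$. Assume, for contradiction, that $\cD$ is not a component of $\cC_\lambda$. Both $\cD$ and $\cC_\lambda$ are $\PGL(3,q)$-invariant, so the entire $\PGL(3,q)$-orbit of $P$ lies in $\cD\cap\cC_\lambda$, and by the choice of $P$ this orbit has cardinality $|\PGL(3,q)|$. B\'ezout's theorem then yields
$$\deg(\cD)\,(q^4-q)=\deg(\cD)\,\deg(\cC_\lambda)\;\ge\;|\PGL(3,q)|=(q^4-q)(q^4-q^2),$$
forcing $\deg(\cD)\ge q^4-q^2$, contradicting the hypothesis. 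Therefore $\cD$ is a component of some curve in $\Lambda$.

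For uniqueness, suppose $\cD$ were a component of both $\cC_\lambda$ and $\cC_{\lambda'}$ with $\lambda\ne\lambda'$. Then the defining polynomial of $\cD$ would divide both $G-\lambda H^{q(q-1)}$ and $G-\lambda' H^{q(q-1)}$, hence also their nonzero difference $(\lambda'-\lambda)H^{q(q-1)}$. This makes $\cD$ an irreducible component of $\cC_\infty$, i.e.\ one of the lines $L_i$ of $\PG(2,q)$; but no such line is $\PGL(3,q)$-invariant, a contradiction. The main technical delicacy is step (b), ensuring that the set of exceptional points of $\cD$ (with nontrivial stabilizer or in the base locus) does not exhaust $\cD$ so that a generic $P$ of the required kind genuinely exists; once this is secured, the arithmetic coincidence $|\PGL(3,q)|=(q^4-q)(q^4-q^2)$ makes the B\'ezout bound match the threshold exactly.
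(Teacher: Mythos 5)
Your proposal is correct and follows essentially the same route as the paper: pick a point of $\cD$ on a long $\PGL(3,q)$-orbit avoiding the base locus of $\Lambda$, and compare the orbit length $|\PGL(3,q)|=(q^4-q)(q^4-q^2)$ with the B\'ezout bound $\deg(\cD)\deg(\cC_\lambda)$ to force $\cD$ to be a component of the unique member of $\Lambda$ through that point. Your added justifications (existence of a generic $P$ with trivial stabilizer, and the divisibility argument for uniqueness, which should also cover the case $\lambda'=\infty$ where one divides $H^{q(q-1)}$ directly) are refinements of the same argument, not a different method.
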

\begin{proof} We use the argument of the proof of Lemma \ref{lem4set21}. Let $\cD$ be an $\PGL(3,q)$-invariant irreducible plane curve, and choose a point $P\in \cC$ from a long $\PGL(3,q)$-orbit which is not a base point of $\Lambda$. Let $\cC_\lambda$ be the unique curve in $\Lambda$ through $P$. Then $\cD$ is an irreducible component of $\cC_\lambda$, otherwise the B\'ezout theorem would yield $\deg(\cD)\ge |\PGL(3,q)|/\deg(\cC_\lambda)=q^2(q-1)(q^2+q+1)(q^3-q)/(q^4-q)=q^4-q^2$. 
\end{proof}
\begin{theorem} \label{thm:dgz}
     The DGZ curve is the only irreducible plane curve of degree $d<q^3-q$ which is $\PGL(3,q)$-invariant.
     \end{theorem}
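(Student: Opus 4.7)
The approach is to apply Theorem~\ref{the22set21} to reduce to components of the pencil $\Lambda$, and then dispose of each $\lambda$ in turn. Let $\cD$ be a $\PGL(3,q)$-invariant irreducible plane curve with $d := \deg(\cD) < q^3-q$. Since $d < q^4-q^2$, Theorem~\ref{the22set21} gives that $\cD$ is a component of a unique $\cC_\lambda \in \Lambda$. If $\lambda \in \{0,\infty\}$, then $\cC_\lambda$ splits entirely into lines (the $q^4-q$ non-$\fq$-rational $\fqq$-lines if $\lambda=0$, or the $q^2+q+1$ $\fq$-rational lines if $\lambda=\infty$); since $\PGL(3,q)$ acts transitively on each of these sets and $\deg(\cD)\ge 2$, no component can be $\cD$. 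For $\lambda=1$, Lemma~\ref{pencil}(v) decomposes $\cC_1$ into the DGZ curve $\cF_{3,1}$ (of degree $q^3-q^2$, with multiplicity $q$) and the dual DGZ curve $\cF_{3,2}$ (of degree $q^3-q$); the constraint $d<q^3-q$ then forces $\cD = \cF_{3,1}$.

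The substantive work is to rule out $\lambda \in K\setminus\{0,1\}$. I would show that every $\PGL(3,q)$-invariant irreducible component of $\cC_\lambda$ in this range has degree at least $q^3-q$. By Lemma~\ref{pencil}(iii), at each $P \in \PG(2,q)$ the curve $\cC_\lambda$ has an ordinary singularity of multiplicity $q^2-q$, with tangent cone the $q^2-q$ lines through $P$ defined over $\fqq$ but not over $\fq$. The stabilizer $\PGL(3,q)_P$ induces $\PGL(2,q)$ on the pencil of lines through $P$ and acts transitively on this tangent cone. Consequently, the tangent cone of $\cD$ at $P$, being a $\PGL(3,q)_P$-stable subset of the full ordinary tangent cone of $\cC_\lambda$, is either empty or the whole cone, forcing $m_P(\cD) \in \{0,\,q^2-q\}$; by transitivity of $\PGL(3,q)$ on $\PG(2,q)$, this value is uniform over all $\fq$-rational points. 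If $m_P(\cD)=q^2-q$, Bézout with any $\fq$-rational line $\ell$---never tangent to $\cD$ at an $\fq$-rational point, since the tangent directions there are non-$\fq$-rational---gives $d \ge (q+1)(q^2-q) = q^3-q$, contradicting $d<q^3-q$.

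The remaining possibility is $\cD \cap \PG(2,q) = \emptyset$, which I would handle by a parallel local analysis at a point $Q \in \PG(2,q^2)\setminus \PG(2,q)$. A direct count of the $\fq$-rational and non-$\fq$-rational lines through $Q$ entering the definitions of $H$ and $G$ yields $m_Q(\cC_\lambda) = q^2-q$ (the $H^{q(q-1)}$-contribution dominating the $G$-contribution), and by Lemma~\ref{pencil}(iv) the unique $\fq$-rational line $\ell$ through $Q$ has contact order $q^2$ with $\cC_\lambda$ at $Q$, so the singularity is non-ordinary with $\ell$ in the tangent cone of multiplicity $q$. The transitive action of $\PGL(3,q)$ on $\PG(2,q^2)\setminus \PG(2,q)$ forces $\cD$ to contain every such $Q$ with the same multiplicity $m$, and analyzing the stabilizer $\PGL(3,q)_Q$ (which fixes $\ell$ and permutes the remaining tangent directions) together with Bézout along $\ell$ yields $d \ge q^3-q$, producing the final contradiction. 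The hardest part I anticipate is making this last step fully rigorous: the non-ordinary tangent cone at non-$\fq$-rational points is more delicate than in the $\fq$-rational case, and closing the argument may require combining the local bound with the complementary-component relation $\cC_\lambda = \cD^{m_{\cD}}\cdot\cE$, where $\cE$ must itself carry the full $\fq$-rational singular multiplicity and therefore, by the earlier argument, satisfies $\deg(\cE) \ge q^3-q$.
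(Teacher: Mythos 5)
Your reduction via Theorem~\ref{the22set21}, your disposal of $\lambda\in\{0,\infty,1\}$, and your treatment of the case where $\cD$ meets $\PG(2,q)$ are sound: by Lemma~\ref{pencil}(iii) the singularity of $\cC_\lambda$ at $P\in\PG(2,q)$ is ordinary with tangent cone the $q^2-q$ lines defined over $\fqq$ but not over $\fq$, the stabilizer of $P$ induces $\PGL(2,q)$ acting transitively on these directions, so $m_P(\cD)\in\{0,q^2-q\}$, and B\'ezout with an $\fq$-rational line gives $d\ge (q+1)(q^2-q)=q^3-q$. Note that this is a genuinely different route from the paper's: the paper never classifies components of $\Lambda$, but instead observes that such a curve is in particular $\overline{\AGL}(2,q)$-invariant, hence by Theorems~\ref{the070921} and~\ref{the070921A} has degree exactly $q^3-q^2$ and lies in a specific $\overline{\AGL}(2,q)$-fixed pencil; if it were not the DGZ curve, that pencil would be $\PGL(3,q)$-fixed and would have to contain, for every point of $\PG(2,q)$, the curve splitting into the $q+1$ rational lines through that point each taken $q-1$ times, which is impossible already for three non-collinear points.

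The genuine gap is your final case, $\cD\cap\PG(2,q)=\emptyset$. The local picture there is not what you describe: at $Q\in\PG(2,q^2)\setminus\PG(2,q)$ the factor $H^{q(q-1)}$ vanishes to order $q^2-q$ and $G$ to order $q^2$, so for $\lambda\neq 0$ the tangent cone of $\cC_\lambda$ at $Q$ is the \emph{single} $\fq$-rational line $\ell$ through $Q$ taken $q^2-q$ times --- not $\ell$ ``with multiplicity $q$'', and with no ``remaining tangent directions'' for $\PGL(3,q)_Q$ to permute. Because the tangent cone is one repeated line, invariance yields no dichotomy on $m:=m_Q(\cD)$; every value $1\le m\le q^2-q$ is a priori admissible. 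The bounds you can actually extract --- after discarding long orbits on the lines, $d=(q^2-q)\iota$ along the $\fq$-rational line $\ell$, where $\iota=I(Q,\ell\cap\cD)\ge m+1$ since every branch of $\cD$ at $Q$ is tangent to $\ell$, together with $d\ge q^2m$ along a non-rational $\fqq$-line --- combine only to $m\ge q-1$ and $d\ge q^2(q-1)=q^3-q^2$, not to $d\ge q^3-q$. Indeed no contradiction can follow from the ingredients you list, because a DGZ-like configuration (degree $q^3-q^2$, no points in $\PG(2,q)$, a $(q-1)$-fold point at every point of $\PG(2,q^2)\setminus\PG(2,q)$ with the rational line as unique tangent) satisfies all of them: this is exactly how $\cF_{3,1}$ sits inside $\cC_1$ by Lemma~\ref{pencil}(v) and the point counts $N_1=0$, $N_2=q^4-q$, so excluding such a component for $\lambda\neq 1$ requires information distinguishing $\cC_\lambda$ from $\cC_1$ that your purely local data do not contain. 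Your fallback is likewise inconclusive: $\deg(\cE)\ge q^3-q$ for the complementary component is perfectly compatible with $\deg(\cC_\lambda)=q^4-q$. It is instructive to compare the paper's proof of Theorem~\ref{the22set21B}, Case (iia): the same two-line B\'ezout bookkeeping lands on the residual possibility $\deg(\cC)=q^3-q^2$ and is closed only by playing $\PGL(3,q)$ against $\PSL(3,q)$ --- a move unavailable when the curve is itself $\PGL(3,q)$-invariant, which is presumably why the paper proves Theorem~\ref{thm:dgz} by the pencil-rigidity argument instead.
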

        \begin{proof}
 Let $\cC$ be an irreducible plane curve of degree $d<q^3-q$ which is $\PGL(3,q)$-invariant.       Since $\overline{\AGL}(2,q)$ is a subgroup of $\PGL(3,q)$, Theorems \ref{the070921} and \ref{the070921A} yield that $\deg(\cC)=q^3-q^2$ and that $\cC$ belongs to the $\overline{\AGL}(2,q)$-fixed pencil $\Lambda$ defined in the proof of Theorem \ref{the070921A}. Assume that $\cC$ is not the DGZ curve, that is, $\cC\neq \cF_{3,1}$. Then that pencil $\Lambda$ is generated by $\cC$ and $\cF_{3,1}$. Since this holds true for any subgroup of $\PGL(3,q)$ conjugate to $\overline{\AGL}(2,q)$, it turns out that $\Lambda$ is a $\PGL(3,q)$-fixed pencil which contains any reducible plane curve consisting of all the $q+1$ lines of $\PG(2,q)$ through a point in $\PG(2,q)$ taken each ($q-1$)-times. Such a pencil cannot actually exist as $\PG(2,q)$ contains three non collinear points determining three such reducible curves that cannot belong to the same pencil.
 \end{proof}

Since $\AGL(2,q)$ is a subgroup of $\PGL(3,q)$, Theorems \ref{the070921} and \ref{the070921A} together with Theorem \ref{thm:dgz}
give a complete solution for Questions (i),(ii) and (iii).
\begin{theorem}
\label{the22set21A} $d(\PGL(3,q))=q^3-q^2$, $\varepsilon(\PGL(3,q))=q^2-q$, and the DGZ curve is the only $\PGL(3,q)$-invariant irreducible plane curve of degree $d=q^3-q^2$.
\end{theorem}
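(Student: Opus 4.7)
The plan is to assemble Theorem \ref{the22set21A} as a direct corollary of three already-established results, using the containment $\AGL(2,q)\le \PGL(3,q)$ (after fixing a line $\ell_\infty$ of $\PG(2,q)$). The key observation is that every $\PGL(3,q)$-invariant irreducible plane curve is, a fortiori, invariant under this $\AGL(2,q)$; so the bounds and gaps obtained in Section \ref{ssagl} apply verbatim.

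First, to compute $d(\PGL(3,q))$, I would combine two ingredients. On the one hand, Theorem \ref{the030921} gives $d(\AGL(2,q))=q^3-q^2$, so no irreducible $\PGL(3,q)$-invariant curve can have degree smaller than $q^3-q^2$. On the other hand, the DGZ curve $\cF_{3,1}$ is well known to be $\PGL(3,q)$-invariant and has degree $q^3-q^2$ (this was already used in Section \ref{rr} when introducing the family $\cF_{n,m}$). These two facts force $d(\PGL(3,q))=q^3-q^2$.

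Second, for $\varepsilon(\PGL(3,q))$, I would invoke Theorem \ref{the040921}: no $\AGL(2,q)$-invariant irreducible plane curve has degree strictly between $q^3-q^2$ and $q^3-q$, and hence neither does any $\PGL(3,q)$-invariant one. Since the dual DGZ curve $\cF_{3,2}$ is a $\PGL(3,q)$-invariant irreducible plane curve of degree $q^3-q$, the first gap is realized exactly at $q^2-q$, giving $\varepsilon(\PGL(3,q))=q^2-q$.

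Finally, uniqueness of the DGZ curve among $\PGL(3,q)$-invariant irreducible plane curves of degree $q^3-q^2$ is the content of Theorem \ref{thm:dgz} (specialized to $d=q^3-q^2<q^3-q$). So there is no real obstacle to overcome in this last step: the hard work was already done in the proof of Theorem \ref{thm:dgz}, where the existence of a second $\PGL(3,q)$-invariant irreducible curve of degree $q^3-q^2$ was ruled out via the pencil argument on three non-collinear fixed points of conjugates of $\overline{\AGL}(2,q)$. Combining these three steps yields the statement.
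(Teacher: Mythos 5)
Your proposal is correct and follows essentially the same route as the paper, which likewise derives the theorem as an immediate corollary of the maximal-subgroup results together with Theorem \ref{thm:dgz} (the paper happens to cite the $\overline{\AGL}(2,q)$ versions, Theorems \ref{the070921} and \ref{the070921A}, rather than the $\AGL(2,q)$ versions \ref{the030921} and \ref{the040921}, but the restriction argument is identical). Both the lower bounds on the degree and the gap transfer from the subgroup to $\PGL(3,q)$ exactly as you say, and the DGZ and dual DGZ curves realize the extremal degrees.
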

\subsection{$\PSL(3,q)$-invariant curves}
Our aim is to show that Theorem \ref{the22set21A} holds true for $\PSL(3,q)$-invariant curves. We may assume that $q\equiv 1 \pmod 3$ otherwise $\PGL(3,q)= \PSL(3,q)$. In this subsection, $t$ stands for an $\fq$-rational line while $\ell$ denotes an $\fqq$-rational line which is not $\fq$-rational.
\begin{theorem}
\label{the22set21B} Every $\PSL(3,q)$-invariant irreducible plane curve of degree smaller than $q^3-q$ is also $\PGL(3,q)$-invariant.
\end{theorem}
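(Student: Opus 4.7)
The plan mirrors the proof of Theorem~\ref{the22set21} for placing $\cD$ in $\Lambda$, and then exploits the multiplicity structure of $\cC_\lambda$ at the $\fq$-rational base points together with transitivity properties of $\PSL(3,q)$. Throughout $q\equiv 1\pmod 3$ forces $q\ge 4$.

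First I would show that any $\PSL(3,q)$-invariant irreducible plane curve $\cD$ of degree $d<q^3-q$ is a component of some unique $\cC_\lambda\in\Lambda$. Since $\Lambda$ is $\PGL(3,q)$-fixed by Lemma~\ref{pencil}(i), it is a fortiori $\PSL(3,q)$-fixed. Pick $P\in\cD$ in a long $\PSL(3,q)$-orbit, so $|\PSL(3,q)\cdot P|=|\PSL(3,q)|$; if $\cD$ were not a component of the unique $\cC_\lambda\in\Lambda$ through $P$, B\'ezout would yield
\[
d(q^4-q)\ge |\PSL(3,q)|=\tfrac{1}{3}q^3(q-1)^2(q+1)(q^2+q+1),
\]
so $d\ge (q^4-q^2)/3>q^3-q$ for $q\ge 4$, a contradiction. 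The cases $\lambda\in\{0,\infty\}$ are excluded since the components of $\cC_0$ and $\cC_\infty$ are, respectively, the $\fqq$-lines not $\fq$-rational and the $\fq$-rational lines, and $\PSL(3,q)$ acts transitively on each of these two families.

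Next, for $\lambda\in K^*$ I would analyze the tangent structure at $\fq$-points. By Lemma~\ref{pencil}(iii), every $P\in\PG(2,q)$ is an ordinary $(q^2-q)$-fold point of $\cC_\lambda$ whose $q^2-q$ distinct tangents are the $\fqq$-lines through $P$ not defined over $\fq$. The stabilizer $\PSL(3,q)_P$ surjects onto $\PGL(2,q)$---after fixing $P=(0{:}0{:}1)$, any upper-left $2\times 2$ block from $\GL(2,q)$ can be lifted by adjusting the bottom-right scalar to make the overall determinant a cube---and $\PGL(2,q)$ is transitive on $\PG(1,q^2)\setminus\PG(1,q)$, hence on the non-$\fq$ tangents at $P$. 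Thus for any $\PSL(3,q)$-invariant irreducible component $\cE$ of $\cC_\lambda$, the set of tangent lines to $\cE$ at $P$ is either empty or consists of all $q^2-q$ non-$\fq$ tangents; the latter gives $\mathrm{mult}_P(\cE)\ge q^2-q$, and together with the transitivity of $\PSL(3,q)$ on $\PG(2,q)$ and B\'ezout with any $\fq$-line one forces $\deg\cE\ge(q+1)(q^2-q)=q^3-q$. Applied to $\cD$, since $d<q^3-q$, we conclude $\cD\cap\PG(2,q)=\emptyset$.

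Finally I would deduce $\PGL(3,q)$-invariance. Fix $\alpha\in\PGL(3,q)\setminus\PSL(3,q)$ and suppose $\alpha\cdot\cD\neq\cD$, so that $\cD,\alpha\cdot\cD,\alpha^2\cdot\cD$ are three distinct $\PSL(3,q)$-invariant irreducible components of $\cC_\lambda$ of degree $d$, all avoiding $\PG(2,q)$. A parallel transitivity argument---now on the stabilizer $\PSL(3,q)_\ell$ of any $\fq$-line $\ell$, which surjects onto $\PGL(\ell)=\PGL(2,q)$ and is hence transitive on the $q^2-q$ non-$\fq$ points of $\ell$---shows that $\cD\cap\ell$ is precisely these $q^2-q$ points, each with a common intersection multiplicity $\mu$, so $d=\mu(q^2-q)$ with $1\le\mu\le q$. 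Since $\PSL(3,q)$ is also transitive on $\PG(2,q^2)\setminus\PG(2,q)$, $\cD$ has a constant multiplicity $\nu$ at every such point, and applying B\'ezout to $\cD$ and $\alpha\cdot\cD$ (whose multiplicity at each non-$\fq$ point equals $\nu$ as well) yields
\[
d^2=\cD\cdot(\alpha\cdot\cD)\ge (q^4-q)\nu^2,
\]
whence $\nu^2\le \mu^2 q(q-1)/(q^2+q+1)<\mu^2$, forcing $\nu\le\mu-1$. In particular $\mu=1$ already gives $\nu=0$, contradicting the fact that $\cD$ passes through every non-$\fq$ point of $\fqq$. The main obstacle of the proof is eliminating the remaining values $2\le\mu\le q$: the intended route is to combine the B\'ezout estimate above with a degree-accounting analysis of the residual curve $\cC_\lambda-\cD-\alpha\cdot\cD-\alpha^2\cdot\cD$ (which is $\PGL(3,q)$-invariant of degree $q^4-q-3d$), invoking Theorem~\ref{thm:dgz} on the $\PGL(3,q)$-invariant irreducible components of small degree, to force the only admissible configuration to be $\mu=q$ with $d=q^3-q^2$, and then $\cD$ would have to coincide with the DGZ curve---yet the DGZ curve contains $\PG(2,q)$, contrary to $\cD\cap\PG(2,q)=\emptyset$. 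This final contradiction establishes $\alpha\cdot\cD=\cD$ for every $\alpha\in\PGL(3,q)\setminus\PSL(3,q)$, hence $\cD$ is $\PGL(3,q)$-invariant.
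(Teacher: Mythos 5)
Your overall strategy is genuinely different from the paper's: you route everything through the $\PGL(3,q)$-fixed pencil $\Lambda$ of Lemma \ref{pencil}, whereas the paper never invokes that pencil here and instead runs a direct case analysis on where the points of the curve lie ($\PG(2,q)$, $\PG(2,q^2)\setminus\PG(2,q)$, or outside $\PG(2,q^2)$) using the orbit structure of line- and point-stabilizers in $\PSL(3,q)$. Your first two steps are sound: the orbit/B\'ezout count placing $\cD$ inside some $\cC_\lambda$, and the tangent-cone argument at $\fq$-rational points showing $\cD\cap\PG(2,q)=\emptyset$, both work (the latter is arguably cleaner than the paper's Cases (iib)--(iii)).

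The problem is the last step, and the gap is exactly where you flag it. The bound $d^2\ge(q^4-q)\nu^2$ only eliminates $\mu=1$; for $2\le\mu\le q$ you have no argument, only an ``intended route,'' and that route as sketched does not work. First, its terminal contradiction rests on a false fact: the DGZ curve does \emph{not} contain $\PG(2,q)$ --- the paper records $N_1=0$ for $\cF_{3,1}$ --- so ``$\cD$ is the DGZ curve, yet the DGZ curve meets $\PG(2,q)$'' is not a contradiction. Second, Theorem \ref{thm:dgz} classifies $\PGL(3,q)$-invariant curves, and $\cD$ is at that stage only known to be $\PSL(3,q)$-invariant, so applying it to $\cD$ (or to the not-necessarily-irreducible residual curve) is circular or unjustified. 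The paper closes the analogous case (its Case (iia)) with two ingredients you are missing: (1) intersecting $\cD$ with a line of $\PG(2,q^2)$ \emph{not} defined over $\fq$ gives a second expression $d=q^2\nu$ alongside $d=\mu(q^2-q)$, which forces $q\mid\mu$, hence $\mu=q$, $d=q^3-q^2$, $\nu=q-1$; (2) at each of the $q^4-q$ points $Q\in\PG(2,q^2)\setminus\PG(2,q)$ the curves $\cD$ and $\alpha\cdot\cD$ share not just the point but also the \emph{unique tangent line} there (the $\fq$-line through $Q$), so $I(Q,\cD\cap\alpha\cdot\cD)\ge(q-1)^2+1$ rather than $(q-1)^2$; summing, $(q^4-q)\bigl((q-1)^2+1\bigr)>q^4(q-1)^2=d^2$ violates B\'ezout. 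Without the tangent-line boost your estimate is strictly too weak: $(q^4-q)(q-1)^2<(q^3-q^2)^2$, so no contradiction arises. Incorporating (1) and (2) would complete your argument.
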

\begin{proof}
 Up to conjugacy in $\PGL(3,q)$, the stabilizer  of $t$ in $\PSL(3,q)$ is ${\rm{ASL}}(2,q)$. Since the subgroup of ${\rm{ASL}}(2,q)$ fixing $t$ pointwise has order $\frac{1}{3}(q-1)q^2$, ${\rm{ASL}}(2,q)$ acts on $t$ as $\PGL(2,q)$. Therefore, ${\rm{ASL}}(2,q)$ has exactly two short orbits on $t$, namely $t\cap\PG(2,q)$ and $t\cap(\PG(2,q^2)\setminus \PG(2,q))$, having length $q+1$ and $q^2-q$, respectively. Let $\ell$ be a line of $\PG(2,q^2)$ but not of $\PG(2,q)$. Then the stabilizer of $\ell$ in $\PSL(3,q)$ has order $|\PSL(3,q)|/(q^4-q)=\frac{1}{3}q^2(q-1)^2$, and hence it contains a subgroup $S_q$ of order $q^2$. Clearly, $S_q$ fixes the unique common point $P$ of $\ell$ with $\PG(2,q)$. If $S_q$ fixed another point in $\ell$ then it would fix each point of $\ell$ but any line fixed pointwise by a projectivity of order $p$ is of $\PG(2,q)$. Therefore, $S_q$ acts transitively on $\ell\cap \PG(2,q^2)\setminus \{P\}$. Let $Q$ be a point in $\PG(2,q^2)\setminus \PG(2,q)$. Up to conjugacy, the stabilizer $U$ of $Q$ in $\PSL(3,q)$ has order $|\PSL(3,q)|/(q^4-q)=\frac{1}{3}q^2(q-1)^2$. Within the set of the lines in the pencil with center $Q$, the group $U$ preserves the unique line of $\PG(2,q)$, it is transitive on the set of the $q^2$ lines in $\PG(2,q^2)\setminus \PG(2,q)$ and each of the remaining $U$-orbits is long.
Let $\cC$ be a $\PSL(3,q)$-invariant irreducible plane curve which is not $\PGL(3,q)$-invariant.

{\em Case (i)} If some of the common points of $t$ with $\cC$ is not contained in $\PG(2,q^2)$ then the ${\rm{ASL}}(2,q)$-orbit of that point has length $q^3-q$ and hence $\deg(\cC)\ge q^3-q$.

{\em Case (ii)}. Assume that $\cC$ has a point $Q$ in $\PG(2,q^2)\setminus \PG(2,q)$, and let $s$ be a tangent to $\cC$ at $Q$. If $s$ is not a line of $\PG(2,q^2)$ then there exist at least $\frac{1}{3}q^2(q-1)^2$ tangents to $\cC$ at $Q$, and hence $Q$ is a point with multiplicity at least $\frac{1}{3}q^2(q-1)^2 > \deg(\cC)$, a contradiction.
If $s$ is a line of  $\PG(2,q^2)\setminus \PG(2,q)$ then the previous argument shows that $Q$ is at least a $q^2$-fold point of $\cC$, whence  $\deg(\cC)\ge q^2(q^2-q) > q^3-q$ follows by B\'ezout's theorem applied to $(\cC,t)$, where $Q\in t$. It remains to consider the case where $\cC$ has a unique tangent line at $Q$, namely the line $t$ through $Q$.

{\em Case (iia)} Assume that $\cC$ has no point in $\PG(2,q)$. Then $|t\cap \cC|=q^2-q$ and $|\ell\cap \cC|=q^2$. Moreover, let  $\alpha=I(Q,t\cap \cC)$ and $\beta=I(Q,\ell\cap \cC)$. Then  $\deg(\cC)=\alpha (q^2-q)$ and $\deg(\cC)=\beta q^2$ by B\'ezout's theorem applied to $(\cC,t)$ and $(\cC,\ell)$, respectively. Thus, $\alpha=\gamma q$ and $\beta=\delta (q-1)$ for two positive integers $\gamma$ e $\delta$. For $\gamma \ge 2$ (or $\delta\ge 2$), this yields $\deg(\cC)\ge 2(q^3-q^2)>q^3-q$. Otherwise $\gamma=\delta=1$ whence $\deg(\cC)=q^3-q^2$. Also,
$Q$ is a ($q-1$)-fold point of $\cC$ as $\ell$ is not a tangent to $\cC$ at $Q$. Since $\PGL(3,q)\gneqq \PSL(3,q)$, some projectivity in $\PGL(3,q)$ takes $\cC$ to another $\PSL(3,q)$-invariant irreducible plane curve $\cD$. Each point $Q$ in $\PG(2,q^2)\setminus \PG(2,q)$ is a common point of $\cC$ and $\cD$ where they also have the same tangent. Therefore, $I(Q,\cC\cap \cD)\ge (q-1)^2+1$. From B\'ezout's theorem applied to $(\cC,\cD)$, $\deg(\cC)\,\deg(\cD)\ge (q^4-q)((q-1)^2+1)$.  However, since $\deg(\cD)=\deg(\cC)=q^3-q^2$, this would yield $q^4(q-1)^2\ge (q^4-q)((q-1)^2-1)$, a contradiction.

{\em Case (iib)} Assume that $\cC$ has a point $P$ in $\PG(2,q)$, and let $s$ be a tangent to $\cC$ at $P$. Arguing as before, if $s$ is not a line of $\PG(2,q^2)$ then $\frac{1}{3}q^2(q-1)^2\ge q^3-q$. Moreover, if $s$ is a line of $\PG(2,q^2)\setminus \PG(2, q)$ then $P$ is a point of multiplicity at least $q^2-q$. Therefore, $\deg(\cC)>(q+1)(q^2-q)=q^3-q$. If $s$ is a line of $\PG(2,q)$, then $P$ is an $r$-fold point with  $r\ge q+1$. Take a line $t$ of $\PG(2,q)$ passing through $P$. Let $I(P,t\cap \cC)=r+\rho$ with $\rho\ge 1$. For a point $Q\in t$ of $\PG(2,q^2)\setminus \PG(2,q)$, let $I(Q,t\cap \cC)=v+\nu$ where $v$ is the multiplicity of $\cC$ at $Q$.
B\'ezout's theorem applied to $(\cC,t)$ yields $\deg(\cC)=(q^2-q)(v+\nu)+(q+1)(r+\rho)$. On the other hand, if $\ell$ is a line of $\PG(2,q^2)\setminus \PG(2,q)$, B\'ezout's theorem applied to $(\cC,\ell)$ gives $\deg(\cC)=q^2v+r$.
Therefore, $(q^2-q)(v+\nu)+(q+1)(r+\rho)=q^2v+r$, whence $q(r+\rho)+\rho+\nu q^2=(v+\nu)q$. From this, $\rho=\sigma q$ for a positive integer $\sigma$. Thus $v=r+\sigma (q+1)+\nu(q-1)\ge 2q$. It turns out that $\deg(\cC)> vq^2\ge 2q^3>q^3-q$.

{\em Case (iii)}. Assume that the points of $\cC$ in $\PG(2,q^2)$ are those in $\PG(2,q)$. For a point $P\in \PG(2,q)$, let $s$ be a tangent to $\cC$ at $P$. Arguing as before, if $s$ is not a line of $\PG(2,q^2)$, then $\frac{1}{3}q^2(q-1)^2\ge q^3-q$. Moreover, if $s$ is a line of $\PG(2,q^2)\setminus \PG(2,q)$ then $\deg(\cC)\ge (q+1)(q^2-q)=q^3-q$ by B\'ezout's theorem applied to $(\cC,t)$. If each  line of $\PG(2,q)$ passing through $P$ is a tangent to $\cC$ then $P$ is a $r$-fold point of $\cC$ with $r\ge q+1$. On the other hand, if $P\in \ell$ then either $\ell\cap \cC=\{P\}$, or $\ell\cap \cC$ contains a point $R$ off $\PG(2,q^2)$. In the latter case, $\ell\cap \cC$ consists of $P$ together with one or more long point-orbits  under the action of the stabilizer of $\ell$ in $\PSL(3,q)$. As we have already shown, such a long orbit has length at least $(q(q-1))^2$. This yields $\deg(\cC)>(q(q-1))^2+(q+1)>q^3-q$. Finally, if $\ell\cap \cC=\{P\}$ then $I(P,\ell\cap \cC)=\deg(\cC)$, and hence $\ell$ would be a tangent to $\cC$ at $P$, a contradiction.
\end{proof}
Theorems \ref{the22set21A} and \ref{the22set21B} have the following corollary.
\begin{theorem}
\label{the22set22A} $d(\PSL(3,q))=q^3-q^2$, $\varepsilon(\PSL(3,q))=q^2-q$, and the DGZ curve is the only $\PSL(3,q)$-invariant irreducible plane curve of degree $d=q^3-q^2$.
\end{theorem}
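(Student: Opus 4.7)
The plan is to derive Theorem \ref{the22set22A} as a direct consequence of Theorems \ref{the22set21A} and \ref{the22set21B}. Since $\PSL(3,q)$ is a subgroup of $\PGL(3,q)$, any $\PGL(3,q)$-invariant curve is automatically $\PSL(3,q)$-invariant; in particular the DGZ curve $\cF_{3,1}$ (of degree $q^3-q^2$) and the dual DGZ curve $\cF_{3,2}$ (of degree $q^3-q$) are both $\PSL(3,q)$-invariant. This immediately gives $d(\PSL(3,q))\le q^3-q^2$ and $\varepsilon(\PSL(3,q))\le q^3-q-(q^3-q^2)=q^2-q$.

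For the reverse inequality on $d(\PSL(3,q))$, I would argue by contradiction. Suppose there existed a $\PSL(3,q)$-invariant irreducible plane curve $\cC$ with $\deg(\cC)<q^3-q^2$. Since $q^3-q^2<q^3-q$, Theorem \ref{the22set21B} upgrades the invariance of $\cC$ to $\PGL(3,q)$-invariance, which contradicts the value $d(\PGL(3,q))=q^3-q^2$ established in Theorem \ref{the22set21A}. Hence $d(\PSL(3,q))=q^3-q^2$.

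Next, for the uniqueness statement, any $\PSL(3,q)$-invariant irreducible curve $\cC$ of degree exactly $q^3-q^2$ still has degree $<q^3-q$, so Theorem \ref{the22set21B} again forces $\cC$ to be $\PGL(3,q)$-invariant, and then the uniqueness part of Theorem \ref{the22set21A} identifies $\cC$ with the DGZ curve. Finally, for $\varepsilon(\PSL(3,q))\ge q^2-q$, suppose $\cD$ were a $\PSL(3,q)$-invariant irreducible plane curve with $q^3-q^2<\deg(\cD)<q^3-q$; then once more $\deg(\cD)<q^3-q$ lets us apply Theorem \ref{the22set21B}, making $\cD$ a $\PGL(3,q)$-invariant curve of degree strictly between $d(\PGL(3,q))$ and $d(\PGL(3,q))+\varepsilon(\PGL(3,q))$, contradicting Theorem \ref{the22set21A}.

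In short, the only genuine work has already been done in Theorem \ref{the22set21B}: once an upper bound on the degree $(<q^3-q)$ allows us to replace $\PSL(3,q)$-invariance by $\PGL(3,q)$-invariance, all three assertions transfer from the $\PGL(3,q)$-case to the $\PSL(3,q)$-case. There is no further obstacle; the proof is a short deduction with no computation beyond noting that both $q^3-q^2$ and every integer in $(q^3-q^2,\,q^3-q)$ lie below the threshold $q^3-q$ where Theorem \ref{the22set21B} is effective.
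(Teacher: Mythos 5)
Your proposal is correct and follows exactly the paper's route: the paper states Theorem \ref{the22set22A} as an immediate corollary of Theorems \ref{the22set21A} and \ref{the22set21B}, and your write-up simply makes explicit the deduction (using Theorem \ref{the22set21B} to upgrade $\PSL(3,q)$-invariance to $\PGL(3,q)$-invariance for every degree below $q^3-q$) that the paper leaves implicit.
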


\subsection{$\PGU(3,n)$-invariant curves for $q=n^2$}\label{sec:pgu}
Up to a change of reference system in $\PG(2,q)$, the Hermitian curve $\cH_n$ has homogeneous equation  $Y^nZ+YZ^n-X^{n+1}=0$. Moreover, $\cH_n$ is a nonsingular plane curve and its automorphism group is $\PGU(3,n)$ which acts as a doubly transitive permutation group on the set of the $n^3+1$ points of $\cH_n$ in $\PG(2,q)$. Furthermore, $\PGU(3,n)$ is generated by the following  projectivities.
\begin{enumerate}[\rm(i)]
\item  $\alpha_1:(X,Y,Z)\mapsto (X,Z,Y)$,
\item $\alpha_2: (X,Y,Z)\mapsto (X+uZ, u^{n} X +Y+eZ, Z)$, with  $u,e\in \fq$ and  $e^{n}+e=u^{n+1}$,
\item $\alpha_3: (X,Y,Z)\mapsto (cX, c^{n+1} Y, Z)$, with   $c^{q-1}=1$,
\end{enumerate}
We exhibit a $\PGU(3,n)$-fixed pencil. For this purpose, set $F(X,Y,Z)=Y^{n}Z+YZ^n-X^{n+1}$ and $G(X,Y,Z)=Y^{n^3}Z+YZ^{n^3}-X^{n^{3}+1}$. For $i=1,2,3$, let $A_k=(a_{ij})$ be a $3\times 3$ nonsingular matrix associated with $\alpha_k$. Then $F(X,Y,Z)$ is  an $A_k$-invariant polynomial for $k=1,2$, whereas $F(X,Y,Z)=c^{n+1}F(\bar{X},\bar{Y},\bar{Z})$ where $c\in \fq$ and the vector  $[X,Y,Z]$ is the product of $A_3$ by the vector $[\bar{X},\bar{Y},\bar{Z}]$. Therefore, $F(X,Y,Z)^{q-n+1}=c^{n^3+1}F(\bar{X},\bar{Y},\bar{Z})^{q-n+1}$. Since $G(X,Y,Z)=0$ can be viewed as the homogeneous equation of the Hermitian curve $\cH_{n^3}$ over $\mathbb{F}_{n^3}$, the argument also shows that $G(X,Y,Z)$ is both  an $\alpha_1$ and $\alpha_2$ invariant polynomial whereas $G(X,Y,Z)=c^{n^3+1}G(\bar{X},\bar{Y},\bar{Z})$, where $c\in \fq$ as $A_3$ is defined over $\fq$.
Therefore, $\cH_{n^3}$ together with $\cH_n$, taken $q-n+1$ times, generate a $\PGU(3,n)$-fixed pencil $\Lambda$.
\begin{lem}\label{pencil7oct} The pencil $\Lambda$ consists of all curves $\cC_\lambda$ with $\lambda \in K\cup \{\infty\}$ of homogeneous equation  $G(X,Y,Z)- \lambda F(X,Y,Z)^{q-n+1}=0$.
Furthermore,
\begin{enumerate}[\rm(i)]
\item $\Lambda$ is a $\PGU(3,n)$-fixed pencil.
\item $\cC_\lambda$ is nonsingular for $\lambda\neq 1$.
\item $\cC_1$ splits into lines.
\end{enumerate}
\end{lem}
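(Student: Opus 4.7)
The plan is to verify the three assertions separately, relying on the transformation formulas for $F$ and $G$ collected just above the statement.

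Part (i) is essentially bookkeeping. We already know that under the generators $\alpha_1,\alpha_2$ both $F$ and $G$ are invariant, while under $\alpha_3$ the forms scale by $F\mapsto c^{n+1}F$ and $G\mapsto c^{n^3+1}G$. Since $(n+1)(q-n+1)=n^3+1$, both $G$ and $F^{q-n+1}$ scale by the same factor $c^{n^3+1}$, so every $G-\lambda F^{q-n+1}$ does as well and each $\cC_\lambda$ is $\PGU(3,n)$-invariant. Hence $\Lambda$ is a $\PGU(3,n)$-fixed pencil.

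For (ii) I would compute the three partial derivatives of $H_\lambda:=G-\lambda F^{q-n+1}$ in characteristic $p$. Since $n=p^k$, the outer multiplicities $n^3+1,\ n+1,\ q-n+1$ all reduce to $1\pmod p$, and each partial becomes a pure $n$-th power: writing $\mu\in K$ for the unique element with $\mu^n=\lambda$,
\[
\frac{\partial H_\lambda}{\partial X}=X^n(\mu F^{n-1}-X^{q-1})^n,\quad
\frac{\partial H_\lambda}{\partial Y}=Z^n(Z^{q-1}-\mu F^{n-1})^n,\quad
\frac{\partial H_\lambda}{\partial Z}=Y^n(Y^{q-1}-\mu F^{n-1})^n.
\]
At a singular point $(X:Y:Z)$, in each coordinate either that coordinate vanishes or its $(q-1)$-th power equals $\mu F^{n-1}$. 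Running the case analysis on how many coordinates vanish: when none do, the point is $(1:\alpha:\beta)$ with $\alpha,\beta\in\fq^*$, and the crucial observation is that $F(1,\alpha,\beta)=\alpha^n\beta+\alpha\beta^n-1=\Tr_{\fq/\mathbb{F}_n}(\alpha\beta^n)-1\in\mathbb{F}_n$, so $F^{n-1}\in\{0,1\}$; the value $0$ is ruled out by $\mu F^{n-1}\neq 0$, forcing $F^{n-1}=1$ and hence $\mu=1$, i.e.\ $\lambda=1$. When exactly one coordinate vanishes the point reduces to one like $(0:\zeta:1)$ with $\zeta\in\fq^*$, and an entirely analogous computation using $F(0,\zeta,1)=\zeta+\zeta^n\in\mathbb{F}_n$ again forces $\lambda=1$. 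When two coordinates vanish the partial equations are either directly incompatible with the candidate point or once more impose $\lambda=1$. Hence no $\cC_\lambda$ with $\lambda\neq 1$ has a singular point.

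For (iii) the plan is to identify $\cC_1$ with the union of the $n^3+1$ tangent lines of $\cH_n$ at its $\fq$-rational points. Let $T(X,Y,Z)=\prod_{P\in\cH_n(\fq)}L_P$; this is a $\PGU(3,n)$-invariant form of degree $n^3+1=\deg H_1$. By transitivity of $\PGU(3,n)$ on the set of tangent lines, it is enough to show that one tangent line divides $H_1$. Taking $P_\infty=(0:1:0)$, whose tangent is $Z=0$, a direct substitution yields
\[
H_1(X,Y,0)=-X^{n^3+1}-(-X^{n+1})^{q-n+1}=0,
\]
since $q-n+1=n^2-n+1$ is odd. Pairwise coprimality of the $L_P$ then yields $T\mid H_1$ and, by equal degrees, $H_1=cT$ for some $c\in K$. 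That $c\neq 0$ follows from a single coefficient check: the monomial $Y^{n^3}Z$ appears in $G$ with coefficient $1$, while in $F^{q-n+1}=(Y^nZ+YZ^n-X^{n+1})^{q-n+1}$ the total $Y$-degree of any monomial is at most $n(q-n+1)=n^3-n^2+n<n^3$, so this monomial does not occur there. The main technical obstacle is the case analysis in (ii); parts (i) and (iii) are essentially formal consequences of the transformation formulas together with one tangent-line substitution and one coefficient comparison.
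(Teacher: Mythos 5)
Your proposal is correct and follows essentially the same route as the paper: part (i) from the scaling identity $(n+1)(q-n+1)=n^3+1$, part (ii) by showing that vanishing of the partial derivatives forces the nonzero coordinates into $\fq$ and hence $F^{q-n}=1$ and $\lambda=1$, and part (iii) by identifying $\cC_1$ with the $\PGU(3,n)$-invariant union of the $n^3+1$ tangent lines to $\cH_n$ at its $\fq$-rational points. Your write-up is in fact a little more explicit than the paper's (the $n$-th-power factorization of the partials and the verification that $Z=0$ divides $H_1$), but the underlying argument is the same.
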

\begin{proof}(ii) Let $V(X,Y,Z)=G(X,Y,Z)- \lambda F(X,Y,Z)^{q-n+1}$ with $\lambda \in K$. Then $P=(x:y:z)$ is a singular point of $\cC_\lambda$ if and only if the partial derivatives $V_X,V_Y,V_Z$ vanish in $(x,y,z)$. A straightforward computation shows that this occurs if and only if
$$x^{n^3}=\lambda(y^nz+yz^n-x^{n+1})^{q-n}x^n,\,\,z^{n^3}=\lambda(y^qn+yz^n-x^{n+1})^{q-n}z^n,\,\, y^{n^3}=\lambda(y^nz+yz^n-x^{n+1})^{q-n}y^n.$$
Clearly, $y^nz+yz^n-x^{n+1}\neq 0$, otherwise $x=y=z=0$ would hold. Suppose first that $x\neq 0$. Then $x=1$ can be assumed. Therefore, $\lambda(y^nz+yz^n-1)^{q-n}=1$ and hence $y^{n^3}=y^n, z^{n^3}=z^n$ yielding $y,z\in \fq$. On the other hand, if $y,z\in \fq$ then $(y^nz+yz^n-1)^n=y^nz+yz^n-1$ and hence $(y^nz+yz^n-1)^{q-n}=1$. From this, $\lambda=1$ follows. A similar argument shows that $\lambda=1$ still holds if either $y\neq 0$ or $z\neq 0$.

 (iii) Clearly, the line $\ell_\infty$ of equation $Z=0$ is a component of $\cC_1$. On the other hand, $\ell_\infty$ is the tangent to $\cH_n$ at the point $Y_\infty=(0:1:0)$. Since $\cH_n$ is $\PGU(3,n)$-invariant, the reducible plane curve of degree $n^3+1$ whose components are the tangents to $\cH_q$ at points in $\PG(2,q)$ is $\PGU(3,n)$-invariant. Actually, this plane curve coincides with $\cC_1$ as $\cC_1$ is  also $\PGU(3,n)$-invariant.
\end{proof}

\begin{theorem}\label{thm:pgu}
Let $\cC$ be a $\PGU(3,n)$-invariant irreducible plane curve of degree $d<nq(q-1)$ where $q=n^2$.
Then either $\cC$ is the Hermitian curve $\cH_n$,  or
$n=q^3+1$ and $\cC$ has homogeneous equation
$$ Y^{n^3}Z+YZ^{n^3}-X^{n^3+1}  -\lambda(Y^nZ+YZ^n-X^{n+1} )^{q-n+1}=0, \;   \lambda \in K \backslash \{1\}.$$
For $\lambda=1$, $\cC$ splits into $n^3+1$ lines.
\end{theorem}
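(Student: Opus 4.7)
The plan is to adapt the pencil--B\'ezout argument used for Lemma \ref{lem4set21} and Theorem \ref{the22set21}, this time with the $\PGU(3,n)$-fixed pencil $\Lambda$ constructed just before Lemma \ref{pencil7oct}.

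A preliminary step is to upgrade Lemma \ref{pencil7oct}(i) by verifying that not only the pencil $\Lambda$ as a whole, but also each individual curve $\cC_\lambda$, is $\PGU(3,n)$-invariant. By construction, $F(X,Y,Z)$ and $G(X,Y,Z)$ are fixed by the generators $\alpha_1,\alpha_2$, and under $\alpha_3$ they are rescaled by a common factor $c^{n^3+1}$, since $F^{q-n+1}$ has scaling factor $c^{(n+1)(q-n+1)}=c^{n^3+1}$ matching that of $G$. Hence the equation $G-\lambda F^{q-n+1}=0$ is preserved for every $\lambda\in K\cup\{\infty\}$.

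Next I would pick a point $P\in\cC$ which is neither a base point of $\Lambda$ nor a fixed point of any nontrivial element of $\PGU(3,n)$. Both exceptional sets are finite: the base locus $\cH_n\cap\cH_{n^3}$ is finite by B\'ezout, and the points with nontrivial stabilizer in $\PGU(3,n)$ lie on the finite union of fixed loci of the finitely many nonidentity elements of $\PGU(3,n)$. Thus such $P$ exists on the infinite curve $\cC$. Let $\cC_\lambda$ be the unique member of $\Lambda$ through $P$. If $\cC$ were not an irreducible component of $\cC_\lambda$, then, as $\cC_\lambda$ is $\PGU(3,n)$-invariant, the full $\PGU(3,n)$-orbit of $P$ would be contained in $\cC\cap\cC_\lambda$, and B\'ezout's theorem would give $|\PGU(3,n)|\le d\cdot \deg(\cC_\lambda)=d(n^3+1)$, i.e.\ $d\ge |\PGU(3,n)|/(n^3+1)=n^3(n^2-1)=nq(q-1)$, contradicting the hypothesis. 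Therefore $\cC$ is an irreducible component of some $\cC_\lambda$.

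The conclusion follows by a case analysis using Lemma \ref{pencil7oct}. If $\lambda\in K\setminus\{1\}$, then $\cC_\lambda$ is nonsingular of degree $n^3+1\ge 2$, hence irreducible, so $\cC=\cC_\lambda$ has the displayed equation. If $\lambda=\infty$, then $\cC_\infty$ is $\cH_n$ taken $q-n+1$ times, whose only irreducible component is $\cH_n$, yielding $\cC=\cH_n$. Finally, if $\lambda=1$, then by the proof of Lemma \ref{pencil7oct}(iii), $\cC_1$ splits into the $n^3+1$ tangent lines to $\cH_n$ at its $\mathbb{F}_q$-rational points; since $\PGU(3,n)$ permutes these points transitively, it permutes the corresponding tangents transitively as well, so no single one of them is $\PGU(3,n)$-invariant and this case cannot yield an irreducible $\PGU(3,n)$-invariant $\cC$. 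The most delicate step in the whole argument is the individual $\PGU(3,n)$-invariance of each pencil member, which ultimately reduces to the matching scalar factor $c^{n^3+1}$; once this is in hand, the orbit count and the case split are routine.
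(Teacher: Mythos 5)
Your argument is correct and takes essentially the same route as the paper: the paper likewise uses the $\PGU(3,n)$-fixed pencil $\Lambda$ of Lemma \ref{pencil7oct}, adapts the B\'ezout/orbit argument of Lemma \ref{lem4set21} to force $\cC$ to be a component of some $\cC_\lambda$, and concludes from the nonsingularity statement in Lemma \ref{pencil7oct}(ii) together with the splitting of $\cC_1$ and $\cC_\infty$. Your explicit checks --- the common scalar $c^{n^3+1}$ making each pencil member individually invariant, and the orbit count $|\PGU(3,n)|/(n^3+1)=n^3(n^2-1)=nq(q-1)$ --- merely spell out steps the paper leaves implicit.
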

\begin{proof} By Lemma \ref{pencil7oct}(i), $\Lambda$ is a $\PGU(3,q)$-fixed pencil. Thus the proof of Lemma \ref{lem4set21} can be adapted to show that $\cC$ belongs to $\Lambda$. This, together with Lemma \ref{pencil7oct}(ii) proves the claim.
\end{proof}
Theorem \ref{thm:pgu} has the following corollary.
\begin{theorem}
\label{the07oct21} Let $q=n^2$. Then $d(\PGU(3,n))=n+1$, $\varepsilon(\PGU(3,n))=qn-1$, and the Hermitian curve $\cH_n$ is the only $\PGU(3,n)$-invariant irreducible plane curve of degree $d=n+1$.
\end{theorem}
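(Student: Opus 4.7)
The plan is to deduce Theorem \ref{the07oct21} as an immediate corollary of Theorem \ref{thm:pgu}. Since the Hermitian curve $\cH_n$ is $\PGU(3,n)$-invariant of degree $n+1$, one has $d(\PGU(3,n)) \le n+1$ at once. For the reverse inequality and the uniqueness claim I will apply the classification in Theorem \ref{thm:pgu}: because $n+1 < nq(q-1)$, its conclusion restricts every $\PGU(3,n)$-invariant irreducible plane curve of degree below $nq(q-1)$ to be either $\cH_n$ itself or a pencil member $\cC_\lambda$ of degree $n^3+1$ with $\lambda\neq 1$. Hence no such invariant curve has degree strictly less than $n+1$, which simultaneously yields $d(\PGU(3,n))=n+1$ and the uniqueness of $\cH_n$ at this degree.

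For the value of $\varepsilon(\PGU(3,n))$ I will locate the first gap in the spectrum of degrees of $\PGU(3,n)$-invariant irreducible plane curves. Every integer $d$ in the range $n+1 < d < n^3+1$ still satisfies $d<nq(q-1)$, so Theorem \ref{thm:pgu} applies and excludes any $\PGU(3,n)$-invariant irreducible plane curve of degree $d$. Therefore the first gap beyond $d(\PGU(3,n))$ has length $n^3+1-(n+1)=n^3-n=qn-n$, which gives the claimed $\varepsilon(\PGU(3,n))$ consistent with the summary in Section \ref{rr}.

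The only numerical input required is the elementary inequality $n^3+1<nq(q-1)=n^3(n^2-1)$, valid for all $n\ge 2$; this is precisely what makes Theorem \ref{thm:pgu} available over the full range relevant to both the computation of $d(\PGU(3,n))$ and that of $\varepsilon(\PGU(3,n))$. Beyond this routine check the argument is pure bookkeeping, and no substantive obstacle arises: all three assertions of the theorem follow by direct inspection of the list produced by Theorem \ref{thm:pgu}.
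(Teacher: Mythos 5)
Your proposal matches the paper's own treatment: Theorem \ref{the07oct21} is presented there with no separate argument, simply as an immediate corollary of Theorem \ref{thm:pgu}, read off exactly as you do (with the same degree bookkeeping $n+1<n^3+1<nq(q-1)$). One remark: your computed first gap $n^3+1-(n+1)=qn-n$ agrees with the value stated in the summary in Section \ref{rr}, not with the ``$qn-1$'' printed in the statement of Theorem \ref{the07oct21}, which appears to be a typo in the paper rather than an error on your part.
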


\subsection{$NS_q$-invariant curves with $NS_q$ the normalizer of a Singer group}
In this case, our method relying on a fixed pencil is not efficient enough to solve Questions (i),(ii), (iii). Nevertheless, our method becomes appropriate whenever applied to a fixed net.
To show this, we first exhibit an $S_q$-fixed net. As explained in Section \ref{sbp}, up to a change of the reference system over $\mathbb{F}_{q^3}$, a generator of the Singer subgroup $S_q$ is
the projectivity $\sigma: (X,Y,Z)\mapsto (b X,b^{q^2+1}Y,Z)$ for a ($q^2+q+1$)-st primitive root $b\in \mathbb{F}_{q^3}$, while a complement is generated by the projectivity $\mu: (X,Y,Z)\mapsto (Y,X,Z)$.
\begin{lemma}
\label{lem110921} Let $\Gamma$ be the net consisting of all plane curves $\cC_{(\lambda:\mu:\tau)}$ of homogeneous equation
\begin{equation}
\label{eq11set21}
\lambda X^{q+1}Y + \mu Y^{q+1}Z + \tau Z^{q+1}X=0.
\end{equation}
Then $\Gamma$ is $S_q$-fixed, that is, $\cC_{(\lambda:\mu:\tau)}$ is $S_q$-invariant for any non-trivial homogeneous triple $(\lambda: \mu\ :\tau)$.
\end{lemma}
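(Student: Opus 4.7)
The plan is to prove something stronger than claimed, namely that every curve $\cC_{(\lambda:\mu:\tau)}$ in $\Gamma$ is individually $S_q$-invariant, by a direct computation on the generator $\sigma$. Since $S_q = \langle \sigma \rangle$, it suffices to check that the pull-back of $F_{(\lambda:\mu:\tau)}(X,Y,Z) = \lambda X^{q+1}Y + \mu Y^{q+1}Z + \tau Z^{q+1}X$ under $\sigma$ is a scalar multiple of itself, with the scalar independent of the three monomials.

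First I would substitute $(X,Y,Z) \mapsto (bX, b^{q^2+1}Y, Z)$ into each monomial and record the scalar that appears:
\begin{itemize}
\item $X^{q+1}Y$ picks up the factor $b^{(q+1) + (q^2+1)} = b^{q^2+q+2}$;
\item $Y^{q+1}Z$ picks up $b^{(q^2+1)(q+1)} = b^{q^3+q^2+q+1}$;
\item $Z^{q+1}X$ picks up $b$.
\end{itemize}
The key arithmetic fact is that $b^{q^2+q+1} = 1$, because $b$ is a primitive $(q^2+q+1)$-st root of unity, and moreover $q^3 \equiv 1 \pmod{q^2+q+1}$ since $q^3 - 1 = (q-1)(q^2+q+1)$, hence $b^{q^3} = b$. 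Consequently $b^{q^2+q+2} = b \cdot b^{q^2+q+1} = b$ and $b^{q^3+q^2+q+1} = b \cdot b^{q^2+q+1} = b$.

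The three scalars therefore all coincide with $b$, and one gets
\[
F_{(\lambda:\mu:\tau)}(bX, b^{q^2+1}Y, Z) = b \, F_{(\lambda:\mu:\tau)}(X,Y,Z),
\]
so the projective curve $\cC_{(\lambda:\mu:\tau)}$ is fixed by $\sigma$, and hence by the whole cyclic group $S_q$. Since $(\lambda:\mu:\tau)$ was arbitrary, every curve of $\Gamma$ is $S_q$-invariant, which in particular yields $S_q$-invariance of the net. There is no real obstacle here: the argument is a one-line check once the exponents are reduced modulo $q^2+q+1$, and the only thing to be careful about is to use $b^{q^3} = b$ (rather than $b^q = b$, which fails) when simplifying $b^{(q^2+1)(q+1)}$.
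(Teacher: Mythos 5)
Your proof is correct and is essentially identical to the paper's own one-line argument: the paper likewise checks that each of the three monomials picks up the common factor $b$ under $\sigma$ (using $b^{q^2+q+1}=1$, hence $b^{q^3}=b$), so that each curve of the net is individually $\sigma$-invariant. No differences worth noting.
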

\begin{proof} By $b^{q^2+q+1}=1$, the claim follows from $(bX)^{q+1}(b^{q^2+1}Y)=bX^{q+1}Y, (b^{q^2+1}Y)^{q+1}Z=bY^{q+1}Z$, and $Z^{q+1}(bX)=bZ^{q+1}X$.
\end{proof}
It should be noticed that $\Gamma$ is not $NS_q$-invariant. However, $\cC_{(1:1:1)}$, i.e. the Pellikaan curve, is an $NS_q$-invariant curve in $\Gamma$. There are exactly two more $NS_q$-invariant curves in $\Gamma$, namely $\cC_{(\omega:\omega^2:1)}$, and $\cC_{(\omega^2:\omega:1)}$ where $\omega$ is a primitive third root of unity.
\begin{lemma}
\label{lem110921A} The $S_q$-invariant irreducible plane curves of degree smaller than $2q+1$ belong to the net $\Gamma$.
\end{lemma}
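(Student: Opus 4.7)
The approach is the net-Bezout technique used throughout the paper, here applied to the $S_q$-fixed net $\Gamma$ in place of a pencil, complemented by a direct enumeration in the upper degree range where the B\'ezout bound no longer cuts. Let $\cC$ be an $S_q$-invariant irreducible plane curve of degree $d<2q+1$. The cyclic group $S_q$ has only three short orbits on $\PG(2,K)$, namely the fundamental vertices $(1{:}0{:}0),(0{:}1{:}0),(0{:}0{:}1)$ fixed by every element of $S_q$, which coincide with the three base points of $\Gamma$; every other orbit has length $q^2+q+1$.

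Pick a smooth point $P\in\cC$ on a long $S_q$-orbit (possible, since $\cC$ has only finitely many singular points and only three points of $\cC$ lie on short orbits), and let $t$ denote the tangent to $\cC$ at $P$. Because $\Gamma$ is a two-dimensional linear system, the two linear conditions ``passing through $P$'' and ``tangent to $t$ at $P$'' cut out a curve $\cD\in\Gamma$. By Lemma~\ref{lem110921} each member of $\Gamma$ is itself $S_q$-invariant, so $\cD$ contains the entire orbit of $P$ and the tangent to $\cD$ at $\sigma^i(P)$ coincides with the tangent to $\cC$; this yields $I(\sigma^i(P),\cC\cap\cD)\geq 2$ for every $i=0,\ldots,q^2+q$, producing total intersection at least $2(q^2+q+1)$. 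If $\cC$ were not a component of $\cD$, B\'ezout would force
\[
d(q+2)\geq 2(q^2+q+1),
\]
so $d\geq (2q^2+2q+2)/(q+2)>2q-2$. Since $\deg(\cD)=q+2$, this is incompatible with $d\leq q+2$, and hence $\cC$ must be a component of some $\cD\in\Gamma$; so $\cC$ belongs to $\Gamma$ (as $\cD$ itself when $d=q+2$, or as an irreducible factor of a reducible member of $\Gamma$ when $d<q+2$). The small-$q$ borderline ($q=2,3$) where the two thresholds nearly coincide is checked directly by noting that, in the relevant degrees, the weight-$1$ semi-invariant space is spanned exactly by $X^{q+1}Y,Y^{q+1}Z,Z^{q+1}X$ and every other weight class is visibly reducible.

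For the residual range $q+3\leq d\leq 2q$---in which $\cC$ cannot literally be a component of a degree-$(q+2)$ member of $\Gamma$---I would argue by direct enumeration of $S_q$-semi-invariant monomials. Any $S_q$-semi-invariant form of degree $d$ and weight $s$ is a linear combination of monomials $X^iY^jZ^k$ with $i+j+k=d$ and $i-jq\equiv s\pmod{q^2+q+1}$; the constraint $i+j\leq 2q$ forces the exponent pairs $(i,j)$ into a very narrow lattice strip admitting only a handful of monomials per weight class. A short case inspection shows that once $d>q+2$ every such form factors as $L^{d-q-2}$ times a member of $\Gamma$, where $L\in\{X,Y,Z\}$ is the coordinate axis dictated by the weight (for instance every weight-$1$ form of degree $q+2+r$ is divisible by $Z^r$ with quotient in $\Gamma$). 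Hence no irreducible $S_q$-invariant curve of degree strictly between $q+2$ and $2q+1$ exists, which together with the B\'ezout step proves the lemma.

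The main obstacle is precisely that the B\'ezout inequality is sharp enough to cover $d\leq q+2$ but cannot by itself reach the upper range $d\in\{q+3,\ldots,2q\}$ the statement requires; handling those degrees depends on the elementary but case-based combinatorial observation that, for $i+j\leq 2q<q^2+q+1$, each residue class of $i-jq$ contains only a few lattice points and every corresponding semi-invariant form inevitably shares a common coordinate factor, which is bookkeeping rather than a structural geometric fact about $\Gamma$.
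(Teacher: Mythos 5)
Your first step is exactly the paper's argument: pick $P$ on a long $S_q$-orbit, use the net to produce a $\cD\in\Gamma$ through $P$ tangent to $t$, propagate $I(\cdot,\cC\cap\cD)\ge 2$ over the whole orbit, and apply B\'ezout. You have, however, done the arithmetic honestly: with $\deg\cD=q+2$ the inequality $d(q+2)\ge 2(q^2+q+1)$ only gives $d>2q-2$, whereas the paper writes $d(q+1)\ge 2(q^2+q+1)$ (treating the net members as if they had degree $q+1$) and thereby reaches $d>2q$ in one stroke. So you are right that the B\'ezout step alone does not close the whole range $d\le 2q$; but note that it does more than you credit it with: for $q+3\le d\le 2q-2$ the curve $\cC$ can neither fail to be a component of $\cD$ (B\'ezout) nor be one (since $d>\deg\cD$), so those degrees are excluded outright, and your supplementary argument is only genuinely needed for $d\in\{2q-1,2q\}$.

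Two caveats on the rest. First, your monomial enumeration for the top degrees is asserted rather than proved, and the blanket claim that every semi-invariant form of degree $q+2+r$ is $L^{r}$ times \emph{a member of} $\Gamma$ is false for general weight classes: for instance the weight-$(q+1)$ forms of degree $d\le 2q$ are spanned by $X^{q+1}Z^{d-q-1}$ and $Y^{q}Z^{d-q}$, giving $Z^{d-q-1}(\alpha X^{q+1}+\beta Y^{q}Z)$, whose cofactor is not in $\Gamma$. What you actually need --- and what does appear to hold, but requires a uniform check over all residues of $i-jq$ modulo $q^2+q+1$ --- is only that every such form is divisible by a coordinate, hence reducible. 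Second, ``irreducible component of a reducible member of $\Gamma$'' does not mean ``belongs to $\Gamma$'': the members $\cC_{(\lambda:0:\tau)}$ with $\lambda\tau\ne 0$ split as $X\,(\lambda X^{q}Y+\tau Z^{q+1})$, and $\lambda X^{q}Y+\tau Z^{q+1}=0$ is an irreducible $S_q$-invariant curve of degree $q+1$ that lies outside $\Gamma$; so for $d<q+2$ your conclusion (and, strictly, the lemma itself) has to be read as ``is a component of a member of $\Gamma$''. The paper only exploits the lemma at degree exactly $q+2$, where this distinction disappears, but your write-up should not paper over it.
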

\begin{proof} Let $\cC$ denote an $S_q$-invariant plane curve of degree $d$. Let $P$ be a point of $\cC$ such that the $S_q$-orbit of $P$ has maximal length $q^2+q+1$.
Let $t$ be the tangent line to $\cC$ at $P$, or one of the tangent lines when $P$ is a singular point of $\cC$. Let $r_P=I(P,t\cap \cC)$. Then $r_P\ge 2$, and if $\cC$ is nonclassical then $r_P\ge p$.
There exists a homogeneous triple $(\lambda:\mu:\tau)$ such that $\cC_{(\lambda:\tau:\mu)}$ passes through $P$ and that $t$ is the tangent line to $\cC_{(\lambda:\mu:1)}$, or one of them when $P$ is a singular point of $\cC_{(\lambda:\mu:\tau)}$. Then $I(P,\cC\cap \cC_{(\lambda:\mu:\tau)})\ge 2$, and this holds true for any point in the $NS_q$-orbit of $P$. Therefore either $\cC$ belongs to $\Gamma$, or the B\'ezout theorem yields $d(q+1)=\deg(\cC)\deg(\cC_{(\lambda:\mu:\tau)})\ge 2(q^2+q+1)$ whence $d>2q$.
\end{proof}
A straightforward computation shows that $\cC_{(\lambda:\mu: \tau)}$ has singularity only for three triples, namely $(1:0:0),(0:1:0),(0:0:1)$. In these cases, the curve  splits into lines. Therefore, Lemma \ref{lem110921A} yields that the $NS_q$-invariant irreducible plane curve of degree $q+2$ is the Pellikaan curve together with  $\cC_{(\omega:\omega^2:1)}$, and $\cC_{(\omega^2:\omega:1)}$ where $\omega$ is primitive third root of unity.

Furthermore, since the irreducible plane curve of degree $2q+1$ with homogeneous equation
\begin{equation}
\label{eq13set21} X^{q+1}Y^q + Y^{q+1}Z^q + Z^{q+1}X^q=0
\end{equation}
is $NS_q$-invariant, we have the following.
\begin{theorem}
\label{the130921} $d(NS_q)=q+2$, $\varepsilon(NS_q)=q-1$ and, up to a projectivity defined over $\mathbb{F}_{q^3}$, the only $NS_q$-invariant irreducible curves of degree $d(NS_q)=q+2$ is the Pellikaan curve.
\end{theorem}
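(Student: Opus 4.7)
The plan is to combine Lemma \ref{lem110921A}, which confines every $S_q$-invariant irreducible curve of degree below $2q+1$ to the net $\Gamma$, with three explicit ingredients already available: the Pellikaan curve (an $NS_q$-invariant member of $\Gamma$ of degree $q+2$), the remark identifying the $NS_q$-invariant curves inside $\Gamma$, and the $NS_q$-invariant curve (\ref{eq13set21}) of degree $2q+1$. Each of the three assertions of the theorem follows by coupling one of these ingredients with Lemma \ref{lem110921A}.

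For $d(NS_q)=q+2$, the Pellikaan curve is $NS_q$-invariant, irreducible, of degree $q+2$, which gives $d(NS_q)\le q+2$. For the reverse inequality, suppose $\cC$ is an $NS_q$-invariant irreducible plane curve of degree $d\le q+1$. Then $d<2q+1$, so $\cC$ is $S_q$-invariant and by Lemma \ref{lem110921A} belongs to $\Gamma$; but every member of $\Gamma$ has degree exactly $q+2$, a contradiction.

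For the classification at degree $q+2$, any such $NS_q$-invariant irreducible curve belongs to $\Gamma$ by Lemma \ref{lem110921A}. The complement $C_3\le NS_q$ (generated by the order-$3$ projectivity $(X,Y,Z)\mapsto (Y,Z,X)$, which normalizes $S_q$) acts on $\Gamma$ by $(\lambda:\mu:\tau)\mapsto (\tau:\lambda:\mu)$. A triple is fixed precisely when $c^3=1$ for the proportionality constant, yielding exactly $(1:1:1)$, $(\omega:\omega^2:1)$ and $(\omega^2:\omega:1)$, with $\omega$ a primitive cube root of unity (collapsing to a single triple when $p=3$). Each such triple has all entries nonzero, so the corresponding curve is irreducible (the reducible members of $\Gamma$ being exactly those with two vanishing coordinates). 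To finish, I exhibit a diagonal projectivity $\phi=\mathrm{diag}(1,\omega^{-1},\omega^{q-1})$ that sends the Pellikaan curve to $\cC_{(\omega:\omega^2:1)}$; iterating $\phi$ yields $\cC_{(\omega^2:\omega:1)}$. A short bookkeeping check (using $q^2\equiv 1\pmod 3$ when $p\neq 3$) shows these projectivities exist and are defined over $\mathbb{F}_{q^3}$ up to the same extension needed to realize $\omega$, confirming that the three triples give a single projective equivalence class.

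For $\varepsilon(NS_q)=q-1$, the curve in (\ref{eq13set21}) is $NS_q$-invariant and irreducible of degree $2q+1=(q+2)+(q-1)$, so $\varepsilon(NS_q)\le q-1$. Conversely, suppose there were an $NS_q$-invariant irreducible curve of degree $d$ with $q+2<d<2q+1$. Then $d<2q+1$ forces $\cC\in\Gamma$ by Lemma \ref{lem110921A}, whence $d=q+2$, a contradiction; this yields $\varepsilon(NS_q)\ge q-1$. The main obstacle throughout is the classification step: one has to correctly identify the order-$3$ complement normalizing $S_q$ (the naive transposition $(X,Y,Z)\mapsto(Y,X,Z)$ does not normalize $S_q$), verify irreducibility of the two extra candidate curves, and produce the equivalence projectivity explicitly; the other two claims are then short deductions from Lemma \ref{lem110921A}.
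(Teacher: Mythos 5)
Your proposal is correct and follows essentially the same route as the paper: confine low-degree $S_q$-invariant irreducible curves to the net $\Gamma$ via Lemma \ref{lem110921A}, identify the three triples $(1:1:1)$, $(\omega:\omega^2:1)$, $(\omega^2:\omega:1)$ fixed by the order-$3$ complement, exhibit a diagonal projectivity identifying the corresponding curves with the Pellikaan curve, and use the degree-$(2q+1)$ curve (\ref{eq13set21}) to settle $\varepsilon(NS_q)=q-1$. The only substantive difference is that you (correctly) take the complement to be generated by the cyclic shift $(X,Y,Z)\mapsto(Y,Z,X)$, whereas the paper writes $(X,Y,Z)\mapsto(Y,X,Z)$, which has order $2$ and does not even preserve $\Gamma$ --- an apparent slip in the paper that your version repairs, while your parenthetical that the reducible members of $\Gamma$ are exactly those with two vanishing coordinates is itself slightly off (e.g. $(1:1:0)$ also gives a reducible curve) but harmless, since irreducibility of the two extra candidates follows anyway from their projective equivalence with the Pellikaan curve.
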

\begin{proof} To show the third claim, take $d\in K$ satisfying $d^{q+1}=\omega$, and define $c\in K$ by
$c^qd^{q+2}=1$. Then the projectivity $\pi:(X:Y:Z) \mapsto (cX:dY:Z)$ takes $\cC_{(\omega:\omega^2:1)}$ to $\cC_{(1:1:1)}$ which is the Pellikaan curve. A similar computation shows that $\cC_{(\omega^2:\omega:1)}$ is projectively equivalent to the Pellikaan curve.
\end{proof}
\subsection{$\Delta_q$-invariant curves with $\Delta_q$, the group preserving a triangle in $\PG(2,q)$}
\label{tri}
For the invariant triangle, we choose the one whose vertices are the fundamental points of $\PG(2,q)$, namely $A_1=(1:0:0),A_2=(0:1:0),A_3=(0:0:1)$. Then the Fermat curve $\cF_{q-1}$ of homogeneous equation $X^{q-1}+Y^{q-1}+Z^{q-1}=0$ is $\Delta_q$-invariant. A $\Delta_q$-fixed pencil $\Lambda$ is generated by the irreducible plane curve of homogeneous equation $(XY)^{q-1}+(YZ)^{q-1}+(ZX)^{q-1}=0$ together the reducible plane curve $\cF_{q-1}^{(2)}$ which the square of $\cF_{q-1}$, that is, the plane curve of homogeneous equation $(X^{q-1}+Y^{q-1}+Z^{q-1})^2=0$.  
  Let $\cD$ be a $\Delta_q$-invariant irreducible plane curve whose degree is smaller than $3(q-1)$. Take a point $P\in \cD$ from a long $\Delta_q$-orbit that is not a base point of $\Lambda$. Let $\cC_\lambda$ be the unique curve in $\Lambda$ through $P$. If $\cD$ was not an (irreducible) component of $\cC_\lambda$ then the B\'ezout theorem would yield $\deg(\cD)\ge |\Delta_q|/\deg(\cC_\lambda)=6(q-1)^2/2(q-1)\ge 3(q-1)$. Therefore, $\cD$ is a component of a curve in $\Lambda$. On the other hand, a straightforward computation shows that any curve in $\Lambda$ with homogeneous equation $\lambda (X^{q-1}+Y^{q-1}+Z^{q-1})^2)+
(XY)^{q-1}+(YZ)^{q-1}+(ZX)^{q-1})=0$ is nonsingular. Therefore $\cD$ is a component of $\cF_{q-1}^{(2)}$, and  hence $\cD=\cF_{q-1}$.  This proves the following theorem.
\begin{theorem}
\label{the080921C} $d(\Delta_q)=q-1$, $\varepsilon(\Delta_q)=q-1$, and the Fermat curve of homogeneous equation $X^{q-1}+Y^{q-1}+Z^{q-1}=0$ is the only $\Delta_q$-invariant curve of degree $q-1$. Moreover,
the $\Delta_q$-invariant irreducible plane curves of degree $2(q-1)$ are those of homogeneous equation
$$\lambda(X^{q-1}+Y^{q-1}+Z^{q-1})^2+(XY)^{q-1}+(YZ)^{q-1}+(ZX)^{q-1})=0,\quad \lambda\in K.$$
 \end{theorem}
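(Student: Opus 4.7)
The plan is to exploit the $\Delta_q$-fixed pencil $\Lambda$ generated by $(XY)^{q-1}+(YZ)^{q-1}+(ZX)^{q-1}=0$ together with the double Fermat curve $\cF_{q-1}^{(2)}$ of equation $(X^{q-1}+Y^{q-1}+Z^{q-1})^2=0$, in the same spirit as the proofs for $\AGL(2,q)$, $\overline{\AGL}(2,q)$ and $\PGU(3,n)$. The key numerical input is $|\Delta_q|=6(q-1)^2$ and that $\Lambda$ consists of curves of degree $2(q-1)$, so the B\'ezout bound against any invariant curve $\cD$ not lying in $\Lambda$ will force $\deg(\cD)\ge 3(q-1)$.

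First I would address the minimum-degree claim. The Fermat curve $\cF_{q-1}$ is visibly $\Delta_q$-invariant since $\Delta_q$ permutes the three coordinate axes and scales each coordinate by an element of $\fq^*$, and $(q-1)$-th powers are $\Delta_q$-stable. This gives $d(\Delta_q)\le q-1$; the lower bound is immediate because a $\Delta_q$-invariant curve cannot be a line (the three vertex-opposite lines form a $\Delta_q$-orbit of size three). Next, let $\cD$ be any $\Delta_q$-invariant irreducible plane curve with $\deg(\cD)<3(q-1)$. Choose a point $P\in\cD$ lying in a long $\Delta_q$-orbit (of length $6(q-1)^2$) that is not a base point of $\Lambda$; such $P$ exists because the base locus of $\Lambda$ and the short-orbit locus of $\Delta_q$ together consist of finitely many points, while $\cD$ is infinite. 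Let $\cC_\lambda\in\Lambda$ be the unique member through $P$. If $\cD$ were not a component of $\cC_\lambda$, then the whole orbit of $P$ would be in $\cD\cap\cC_\lambda$, and B\'ezout would give
\[
\deg(\cD)\cdot 2(q-1)\ge 6(q-1)^2,
\]
contradicting $\deg(\cD)<3(q-1)$. Hence $\cD$ is an irreducible component of some $\cC_\lambda$.

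The main obstacle is the nonsingularity step: one must show that for every $\lambda\in K$ the curve
\[
\lambda(X^{q-1}+Y^{q-1}+Z^{q-1})^2+(XY)^{q-1}+(YZ)^{q-1}+(ZX)^{q-1}=0
\]
is nonsingular, and therefore irreducible. I would compute the three partial derivatives (noting that in characteristic $p$ the factor $q-1$ survives as $-1$, so the derivatives do not collapse), eliminate $\lambda$ to obtain two equations symmetric in the three monomials $X^{q-1},Y^{q-1},Z^{q-1}$, and then check that the resulting finite candidate set of singular points does not satisfy the curve equation except trivially; the vertices $A_i$ and the coordinate edges require a separate quick check. Once nonsingularity is secured, for $\lambda\in K$ the member $\cC_\lambda$ is irreducible of degree $2(q-1)$, while the remaining member $\cF_{q-1}^{(2)}$ has the Fermat curve as its only irreducible component.

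Combining these two facts with the previous paragraph yields the whole theorem at once: $\cD=\cF_{q-1}$ (forcing $d(\Delta_q)=q-1$ and uniqueness of the minimum-degree invariant curve), no $\Delta_q$-invariant irreducible curve of degree strictly between $q-1$ and $2(q-1)$ exists (giving $\varepsilon(\Delta_q)=q-1$, with sharpness witnessed by any $\cC_\lambda$, $\lambda\in K$), and every $\Delta_q$-invariant irreducible curve of degree $2(q-1)$ is such a $\cC_\lambda$, proving the classification in the final display. The only delicate ingredient in the whole argument is the nonsingularity of the generic member of $\Lambda$; everything else is orbit-counting and B\'ezout bookkeeping.
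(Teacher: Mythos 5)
Your proposal reproduces the paper's argument step for step: the same pencil $\Lambda$ generated by $(XY)^{q-1}+(YZ)^{q-1}+(ZX)^{q-1}=0$ and $\cF_{q-1}^{(2)}$, the same orbit-versus-B\'ezout bound $\deg(\cD)\cdot 2(q-1)\ge 6(q-1)^2$ forcing any invariant irreducible $\cD$ of degree $<3(q-1)$ to be a component of some $\cC_\lambda$, and the same final appeal to nonsingularity (hence irreducibility) of every member $\cC_\lambda$, $\lambda\in K$. The structural part is sound, up to a small imprecision: the locus of points with nontrivial $\Delta_q$-stabilizer is not finite (it contains whole lines, e.g.\ the axes of the homologies in $\Delta_q$), but since $\cD$ is irreducible and not a line it meets that locus in finitely many points, so the desired $P$ still exists.

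The genuine gap sits exactly at the step you single out as ``the only delicate ingredient'': the blanket nonsingularity claim is false, and the computation you outline would refute it rather than confirm it. Writing $a=X^{q-1}$, $b=Y^{q-1}$, $c=Z^{q-1}$ and $S=a+b+c$, one finds $V_X=-X^{q-2}\bigl(2\lambda S+b+c\bigr)$ and its cyclic analogues. For $p\ne 3$ and $\lambda=-1/3$ every point with $a=b=c\ne 0$ (e.g.\ $(1:1:1)$) is singular; indeed $-3V=a^2+b^2+c^2-ab-bc-ca=(a+\omega b+\omega^2c)(a+\omega^2b+\omega c)$, so $\cC_{-1/3}$ splits into two twisted Fermat curves of degree $q-1$. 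For $p\ne 2$ and $\lambda=-1/4$ the points such as $(1:1:0)$ are singular, and $-4V=(a+b+c)^2-4(ab+bc+ca)$ factors Heron-style into the four forms $X^{(q-1)/2}\pm Y^{(q-1)/2}\pm Z^{(q-1)/2}$, so $\cC_{-1/4}$ splits into four curves of degree $(q-1)/2$. Even $\cC_0$ has the three vertices as $(q-1)$-fold points, so ``nonsingular, therefore irreducible'' fails there too (though $\cC_0$ is irreducible for other reasons). This is not cosmetic: the components of $\cC_{-1/4}$ have degree $(q-1)/2<q-1$ and those of $\cC_{-1/3}$ have degree $q-1$, so the conclusions $d(\Delta_q)=q-1$ and the uniqueness of the Fermat curve are only obtained after one checks that none of these components is individually $\Delta_q$-invariant. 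That check succeeds --- the transpositions in $\Delta_q$ interchange the two factors of $\cC_{-1/3}$, and the diagonal subgroup permutes the four factors of $\cC_{-1/4}$ transitively because $a^{(q-1)/2}=-1$ for $a$ a nonsquare --- so your strategy is repairable, and the exceptional values of $\lambda$ must also be excluded from the final classification. To be fair, the paper's own proof disposes of all of this with the same one-line assertion of nonsingularity, so the defect is inherited; but a complete proof has to carry out this case analysis rather than assume it away.
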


\subsection{$\PGL(2,q)$-invariant curves}\label{sec:pgl2}
In this section, $p$ is odd. Since $\PGL(2,q)$ preserves an irreducible conic $\cC^2$ of $\PG(2,q)$, we have $d(\PGL(2,q))=2$.

To compute $\varepsilon(\PGL(2,q))$, fix a reference system in $\PG(2,q)$ such that the irreducible conic $\cC^2$ preserved by $\PGL(2,q)$ has homogeneous equation $Y^2-XZ=0$. Then $\PGL(2,q)$ is generated by the projectivities $\tau: (X,Y,Z)\mapsto (Z,Y,X)$, $\sigma_a:(X,Y,Z)\mapsto (X+aY+\ha a^2Z,Y+aZ,Z)$ with $a\in \mathbb{F}_q$, and $\delta_b:(X:Y:Z)\mapsto (b^2X:bY:Z)$. Let $\Sigma=\{\sigma_a|\in \fq\}$, and $\Delta=\{\delta_b|b\in \fq^*\}$.
A straightforward computation shows that both polynomials $Y^{q+1}-(X^q+X)$ and $Y^2-2XZ$ are left invariant by $\tau$, $\Sigma$ and $\Delta$. Let $\Lambda$ be the pencil generated by $\cH_q$ and the conic $\cC^2$ of equation $Y^2-2XZ$ taken $\ha (q+1)$ times.
\begin{lemma}
\label{lem111021B}  The pencil $\Lambda$ consists of all curves $\cC_\lambda$ of homogeneous equation (\ref{eq0}).
Furthermore,
\begin{enumerate}
\item[\rm(i)] $\Lambda$ is a $\PGL(2,q)$-fixed pencil.
\item[\rm(ii)] $\cC_\lambda$ is nonsingular if $\lambda\neq \{1,-1\}$.
\item [\rm(iii)] For $\lambda\not\in \{0,1,-1\}$, a line $t$ of $\PG(2,K)$ meets $\cC_\lambda$ in a unique point if and only if $t$ is a tangent to the conic $\cC^2$ at a point in $\PG(2,q)$.
\end{enumerate}
\end{lemma}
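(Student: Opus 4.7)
The plan is to prove parts (i), (ii), and (iii) in sequence. For (i), I would verify directly that each of the generators $\tau$, $\sigma_a$ ($a\in\fq$), $\delta_b$ ($b\in\fq^*$) of $\PGL(2,q)$ preserves both $F_0(X,Y,Z) := Y^{q+1}-X^qZ-XZ^q$ and $g(X,Y,Z) := Y^2-2XZ$ up to a common scalar: the involution $\tau$ and the translation $\sigma_a$ fix them (for $\sigma_a$ one expands $(Y+aZ)^{q+1}$ and $(X+aY+\tfrac{a^2}{2}Z)^qZ$ using $a^q=a$), while $\delta_b$ multiplies both by $b^{q+1}=b^2$. Hence $g^{(q+1)/2}$ scales by $(b^2)^{(q+1)/2}=b^{q+1}=b^2$ as well, so every $F_\lambda = F_0 - \lambda g^{(q+1)/2}$ is preserved up to the same scalar, proving (i).

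For (ii), I would compute the partial derivatives of $F_\lambda$ in characteristic $p$. Using $q+1\equiv 1\pmod p$, the singularity conditions reduce to the system
\begin{equation*}
\lambda X g^{(q-1)/2} = X^q,\qquad \lambda Y g^{(q-1)/2} = Y^q,\qquad \lambda Z g^{(q-1)/2} = Z^q.
\end{equation*}
In the generic subcase with all coordinates nonzero, dividing pairs yields $X^{q-1}=Y^{q-1}=Z^{q-1}$, so the putative singular point is $\fq$-rational, say $(a:b:1)$ with $a,b\in\fq^*$. Substituting then gives $\lambda(b^2-2a)^{(q-1)/2}=1$; since $b^2-2a\in\fq^*$ (the vanishing $b^2-2a=0$ would force $X^q=0$, a contradiction), the factor $(b^2-2a)^{(q-1)/2}\in\{\pm 1\}$, so $\lambda=\pm 1$. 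The coordinate-axes subcases are handled by analogous substitution and also yield $\lambda\in\{\pm 1\}$, so $\cC_\lambda$ is nonsingular whenever $\lambda\notin\{\pm 1\}$.

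The forward direction of (iii) reduces to a single calculation at $P_0=(0:0:1)$: the tangent to $\cC^2$ at $P_0$ is $t\colon X=0$, and substituting gives $F_\lambda(0,Y,Z) = (1-\lambda)Y^{q+1}$, whose unique root is $Y=0$ when $\lambda\neq 1$. Therefore $t\cap\cC_\lambda=\{P_0\}$ with intersection multiplicity $q+1$. By part (i) and the transitivity of $\PGL(2,q)$ on $\cC^2\cap\PG(2,q)$, the $q+1$ tangents to $\cC^2$ at its $\fq$-rational points all enjoy the same property, completing one implication.

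The reverse implication in (iii) will be the main obstacle. Suppose a line $t$ meets $\cC_\lambda$ at a unique point $P$; parameterizing $t$ by $s$, one must have $F_\lambda|_t = c(s-s_P)^{q+1}$ for some $c\in K^*$. I would use the decomposition $F_\lambda|_t = F_0|_t - \lambda(g|_t)^{(q+1)/2}$ together with the base-locus identity $\cC_\lambda\cap\cC^2 = \cH_q\cap\cC^2 = \cC^2\cap\PG(2,q)$ (a byproduct of the pencil structure) and evaluate at the roots $r_1,r_2$ of the quadratic $g|_t$ to obtain the constraint $F_0(r_i) = c(r_i-s_P)^{q+1}$. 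I would then combine these constraints with the classical characterization of the tangents to the Hermitian curve $\cH_q$ as the only lines meeting $\cH_q$ at a single point with multiplicity $q+1$, and carry out a case analysis on whether $t$ is tangent or secant to $\cC^2$ and on the position of $P$ relative to $\{r_1,r_2\}$. The secant case should be excluded by the incompatibility of the two constraints at $r_1\neq r_2$, while the tangent case will force $r_1=r_2=s_P$ and $P\in\cH_q\cap\cC^2 = \cC^2\cap\PG(2,q)$, i.e., $t$ is tangent to $\cC^2$ at an $\fq$-rational point, as required.
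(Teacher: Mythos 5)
Parts (i) and (ii) of your plan are correct and essentially coincide with the paper's argument: (i) is the same generator-by-generator check, and (ii) is the same partial-derivative computation (your system $\lambda Xg^{(q-1)/2}=X^q$, etc., is exactly what the paper gets, and your conclusion that a singular point must be $\fq$-rational with $\lambda=\pm(y^2-2x)^{-(q-1)/2}=\pm1$ matches). The forward half of (iii) — compute on one tangent line and push around by transitivity of $\PGL(2,q)$ on $\cC^2\cap\PG(2,q)$ — is also the paper's argument.

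The genuine gap is in the reverse half of (iii), which you yourself flag as the main obstacle. The constraints you propose to extract — the values $F_0(r_i)=c(r_i-s_P)^{q+1}$ at the one or two points of $t\cap\cC^2$ — are only one or two scalar equations, and they rule out almost nothing: they exclude only the secants of $\cC^2$ joining two base points (both $r_i$ in $\cC^2\cap\PG(2,q)$ would give two distinct points of $t\cap\cC_\lambda$). For instance, if $t$ is tangent to $\cC^2$ at a point $r\in\cC^2(K)\setminus\PG(2,q)$ and meets $\cC_\lambda$ only at some $P\neq r$, your single constraint $F_0(r)=c(r-s_P)^{q+1}$ is perfectly satisfiable, so nothing is contradicted; likewise for a secant through at most one base point. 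The appeal to the characterization of tangents to $\cH_q$ also does not engage, because the hypothesis concerns $t\cap\cC_\lambda$, not $t\cap\cH_q$: writing $F_0|_t=c(s-s_P)^{q+1}+\lambda(g|_t)^{(q+1)/2}$ one typically finds that $t$ meets $\cH_q$ in $q+1$ distinct points, so no information about tangency to $\cH_q$ is available. What is actually needed — and what the paper does — is to use the full polynomial identity $F_\lambda|_t=\alpha(Y-\beta)^{q+1}$, not just its values at two points: writing $t$ as $X=mY+b$, differentiating, and forcing the coefficient of $Y^{q-2}$ in the derivative to vanish yields $\lambda(m^2+2b)=0$, hence $m^2=-2b$ and $Y^2-2(mY+b)=(Y-m)^2$, i.e.\ $t$ is tangent to $\cC^2$; comparing the remaining coefficients then forces $m^q=m$, so the tangency point is $\fq$-rational. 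Without some such coefficient-level (or intersection-multiplicity) argument, the case analysis you outline cannot be closed.
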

 \begin{proof}(ii) Since $\cC_0=\cH_q$, the claim is true for $\lambda=0$. Assume $\lambda\not\in \{0,1,-1\}$. Let $$V(X,Y,Z)=Y^{q+1}-(X^qZ+XZ^q)-\lambda (Y^2-2XZ)^{(q+1)/2}.$$ If $P=(x:y:z)$ is a singular point of $\cC_\lambda$, then the partial derivatives $V_X,V_Y,V_Z$ vanish at $(x,y,z)$. Let $W(X,Y,Z)=(Y^2-2XZ)^{(q-3)/2}$. Then $V_X=-Z^q+\lambda WZ,V_Y=Y^q-\lambda WY, V_Z=-X^q+\lambda WX$. If $z\neq 0$ then $z=1$ can be assumed, and $V_X(x:y:1)=0$ yields $\lambda W(x:y:1)=1$. Now, $V_Y(x:y:1)=V_Z(x:y:1)=0$ implies $x^q=x,y^q=y$. Hence $y^2-2xz\in \fq$. Thus, $W(x:y:1)\in \{1,-1\}$, a contradiction with $\lambda W(x:y:1)=1$. The same argument works
 if either $x\neq 0$, or $y\neq 0$.

 (iii) The claim holds for the line of equation $Z=0$ which is the tangent to $\cC^2$ at the point $X_\infty=(1:0:0)$. Since $\PGL(2,q)$ acts transitively on the points of $\cC^2$ lying in $\PG(2,q)$, the claim holds true for each tangent $t$ to $\cC^2$ with tangency point in $\PG(2,q)$. To show that no other line has this property, write the equation of $t$ in the form $X=mY+b$, or $Y=b$ with $m,b\in K$. In the latter case, if the property were true the polynomial $X^q+X-\lambda(b^2-2X)+b^{q+1}$ would not have any root in $K$. In the former case, $t$ has a unique common point with $\cC_\lambda$ if and only  $$g(Y)=Y^{q+1}-((mY+b)^q+mY+b)^q-\lambda(Y^2-2(Ym+b))^{(q+1)/2}=\alpha(Y-\beta)^{q+1}$$ for some $\alpha,\beta\in K$.
 Since $g(Y)/dY=\alpha(Y-\beta)^q=\alpha Y^q-\alpha\beta^q$, and the coefficient of $Y^{q-2}$ in $q(Y)/dY$ must vanish, this yields $\lambda(m^2+2b)=0$ whence $m^2=-2b$. Therefore, $Y^2-2(mY+b)=(Y-m)^2$, and hence
 $g(Y)=(1-\lambda)Y^{q+1}-(m^q-\lambda m)Y^q-(m-\lambda m^q)Y+\ha m^{2q}+\ha m^2-\lambda m^{q+1}=0$. Thus, $g(Y)/dY= (1-\lambda)Y^q-(m-\lambda m^q)$, and hence $\beta=(m-\lambda m^q)/(1-\lambda)$.
 Also, $(\lambda-1)\beta=m^q-\lambda m$ from $g(Y)=\alpha(Y-\beta)^{q+1}$. Since $\lambda\neq 1$, the latter two equations yield $-(m-\lambda m^q)=\lambda m-m^q$. As $\lambda\neq -1$, $m^q=m$ follows. Thus $m$ and $b=-\ha m^2$
 are elements of $\fq$, that is, the line $t$ is tangent to $\cC^2$ at a point in $\PG(2,q)$. \end{proof}


  The two exceptions are described in the following proposition.
 \begin{proposition}
 \label{pro090921} Both $\cC_{1}$ and $\cC_{-1}$ are singular curves. Furthermore,
  \begin{enumerate}
 \item [\rm(i)] $\cC_{1}$ splits into the  $q+1$ lines of $\PG(2,q)$ which are the common tangents to $\cC^2$ and to $\cH_q$.
 \item[\rm(ii)] $\cC_{-1}$ is a rational curve whose singular points are exactly the internal points of $\cC$ in $\PG(2,q)$ and each of them has multiplicity $2$.
 \end{enumerate}
 \end{proposition}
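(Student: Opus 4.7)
The plan is to treat parts (i) and (ii) separately. For (i), I identify the $q+1$ candidate components and verify each by substitution. For (ii), I locate the singular points of $\cC_{-1}$, prove irreducibility via a line of maximal contact, and extract both the multiplicity at singular points and the geometric genus from a single genus count.

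\noindent\textbf{Part (i).} The $q+1$ tangents to $\cC^2$ at its $\fq$-rational points are $\ell_\infty:Z = 0$ (tangent at $X_\infty = (1:0:0)$) and $\ell_a: X - aY + (a^2/2)Z = 0$ (tangent at $(a^2/2:a:1)$) for $a \in \fq$. I would verify each divides $F_1 := Y^{q+1} - X^qZ - XZ^q - (Y^2-2XZ)^{(q+1)/2}$ by direct substitution: on $\ell_\infty$, $F_1$ reduces to $Y^{q+1} - (Y^2)^{(q+1)/2} = 0$; on $\ell_a$, writing $X = aY - (a^2/2)Z$ yields $Y^2 - 2XZ = (Y-aZ)^2$, hence $(Y^2-2XZ)^{(q+1)/2} = (Y-aZ)^{q+1}$, and a short expansion using $a^q = a$ and $2^q = 2$ in $\fq$ makes all terms cancel. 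Since these $q+1$ distinct lines have total degree $\deg F_1 = q+1$, they exhaust the factorization. A parallel substitution into $\cH_q$ confirms each $\ell_a$ is tangent to $\cH_q$ as well, so they are the $q+1$ common tangents.

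\noindent\textbf{Part (ii) --- singular locus.} Using $q \equiv 0$ and $q+1 \equiv 1 \pmod{p}$, the partial derivatives of $F_{-1} := Y^{q+1} - X^qZ - XZ^q + (Y^2-2XZ)^{(q+1)/2}$ simplify to
\[
\partial_X F_{-1} = -Z^q - ZW, \qquad \partial_Y F_{-1} = Y^q + YW, \qquad \partial_Z F_{-1} = -X^q - XW,
\]
where $W := (Y^2-2XZ)^{(q-1)/2}$. A singular point $(x:y:z)$ with $w = W(x,y,z)$ must satisfy $x^q = -xw$, $y^q = -yw$, $z^q = -zw$. A brief case analysis rules out $z = 0$ (the only candidate $X_\infty$ is nonsingular since $\partial_Z F_{-1}(X_\infty) = -1$), and then normalizing $z = 1$ forces $w = -1$ and $x, y \in \fq$. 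The condition $w = -1$ is equivalent to $y^2 - 2x$ being a nonzero non-square in $\fq$, i.e., $(x:y:1)$ is an internal point of $\cC^2$. Conversely, every internal point lies on $\cC_{-1}$, since $(y^2 - 2x)^{(q+1)/2} = -(y^2 - 2x)$ there, and annihilates all three partials. Thus the singular locus of $\cC_{-1}$ is exactly the set of $q(q-1)/2$ internal points of $\cC^2$.

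\noindent\textbf{Part (ii) --- irreducibility and genus.} Since the singular scheme is zero-dimensional, $F_{-1}$ is squarefree, so $\cC_{-1}$ is a reduced plane curve of arithmetic genus $p_a = q(q-1)/2$. For irreducibility: $F_{-1}|_{Z=0} = 2Y^{q+1}$, so the line $Z = 0$ meets $\cC_{-1}$ only at $X_\infty$ with intersection multiplicity $q+1 = \deg \cC_{-1}$. Every component of $\cC_{-1}$ must meet $Z = 0$, which is not itself a component, hence must pass through $X_\infty$; but only one component can pass through the nonsingular point $X_\infty$, so $\cC_{-1}$ is irreducible. The identity $p_a - p_g = \sum_P \delta_P$ combined with $\delta_P \ge \binom{m_P}{2} \ge 1$ at each of the $q(q-1)/2$ singular points forces $\sum_P \delta_P \ge q(q-1)/2 = p_a$, giving equality throughout: $m_P = 2$ at every singular point and $p_g = 0$. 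Hence $\cC_{-1}$ is rational, with singular points exactly the internal points of $\cC^2$, each of multiplicity $2$. The main technical obstacle is the case analysis confirming that no singular points lie outside $\PG(2, \fq)$; once this is in hand, irreducibility and the simultaneous extraction of multiplicity and genus follow elegantly from the line-of-maximal-contact argument and the standard bound $\delta_P \ge \binom{m_P}{2}$.
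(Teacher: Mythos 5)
Your proof is correct, and it takes a genuinely different route from the paper's. For part (i) the paper never writes the tangent lines down: it observes that the product of the common tangents of $\cC^2$ and $\cH_q$ at their $\fq$-rational points is a $\PGL(2,q)$-invariant curve of degree $q+1$, hence must lie in the pencil $\Lambda$, and then identifies it as $\cC_1$ from the vanishing of the $Y^{q+1}$-coefficient; your explicit verification that each $\ell_a:X-aY+\ha a^2Z=0$ divides $F_1$, together with the degree count, is more computational but equally conclusive and has the merit of being self-contained. For part (ii) the divergence is larger. The paper argues geometrically: it uses transitivity of $\PGL(2,q)$ on the internal points of $\cC^2$, the dihedral stabilizer $D_{q+1}$ of an external line, and the base points of $\Lambda$ to show that the unique pencil member through an internal point has at least $\ha q(q-1)$ double points, concludes rationality from the degree, and only at the end pins down $\lambda=-1$ by inspecting the line $Y=0$. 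You instead compute the singular scheme directly from the partial derivatives, characterize it exactly as the internal points via the non-square condition on $Y^2-2XZ$, prove irreducibility through the line $Z=0$ of maximal contact at the nonsingular point $X_\infty$, and then extract both $m_P=2$ and $p_g=0$ simultaneously from $p_a-p_g=\sum\delta_P$ with $\delta_P\ge\binom{m_P}{2}$. Your version buys two things the paper leaves implicit: an explicit proof that $\cC_{-1}$ is irreducible (which the word ``rational'' in the statement presupposes), and the word ``exactly'' in the description of the singular locus, since the derivative computation rules out singular points outside $\PG(2,q)$ directly rather than as an afterthought of the genus bound. The paper's approach, in exchange, exhibits the $\PGL(2,q)$-equivariant structure (orbits on internal points, external lines) that is reused in Proposition \ref{pro241021}.
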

 \begin{proof} To show (i), we observe that $\cC^2$ and $\cH_q$ have the same tangent lines at each of their common points. Actually, they are exactly their points in $\PG(2,q)$. Since both $\cH_q$ and $\cC^2$ are $\PGL(2,q)$-invariant, the product of these tangents is a plane curve $\cC$ of degree $q+1$ which is also $\PGL(2,q)$-invariant. Since one of the components of $\cC$ is the line of equation $Z=0$,  the term $Y^{q+1}$ in the homogeneous equation of $\cC$ is missing. From $\deg(\cC)=q+1<q^2-q$, $\Lambda$ contains $\cC$ and hence $\cC$ has homogeneous equation (\ref{eq0}). This yields $\cC=\cC_1$.

 To show (ii), it is useful to recall a couple of known facts on conics
 in $\PG(2,q)$, namely $\cC^2$ has exactly $\ha q(q-1)$ internal points, i.e. points not covered by tangents to $\cC^2$ at points in $\PG(2,q)$, and the same number of external lines, i.e. lines disjoint from $\cC^2$ in $\PG(2,q)$. Furthermore, the stabilizer of any such external line $\ell$ is a dihedral subgroup $D_{q+1}$  of $\PGL(2,q)$ of order $2(q+1)$ where $D_{q+1}$ acts on $\ell\cap \PG(2,q)$ with two orbits, namely  $I(\cC^2)\cap \PG(2,q)$ and its complementary set both of length $\ha(q+1)$.  On $\ell\cap\PG(2,K)\setminus \PG(2,q)$, $D_{q+1}$ has an orbit of length $2$ (namely $\cC\cap \ell$)  while the other orbits have length $q+1$.
 Now take a point $P\in I(\cC^2)$.  Since the base points of $\Lambda$ are exactly the $q+1$ common points of $\cC^2$ and $\cH_q$ (each counted with multiplicity  $2$ $q+1$), there is a unique curve $\cC_\lambda\in \Lambda$ through $P$.  By (i) each external point to $\cC^2$ is contained in $\cC_1$. Therefore, no external point of $\cC^2$ is contained in $\cC_\lambda$. Also, no point of $\cC^2$ in
 $\PG(2,K)\setminus \PG(2,q)$ is contained in $\cC_\lambda$. As $\PGL(2,q)$ acts transitively on $I(\cC^2)$, each internal point of $\cC^2$ is contained in $\cC_\lambda$.  Let $\ell$ be an external line to $\cC^2$ through $P$. Since both $\cC_\lambda$ and $\ell$ are $D_{q+1}$-invariant, $D_{q+1}$ preserves $\cC_\lambda\cap\ell$. From $P\in \cC_\lambda\cap \ell$ and $\deg(\cC_\lambda)=q+1$ it follows  $\cC_\lambda \cap \ell=I(\cC^2)\cap \ell$. In particular, $I(P,\cC_\lambda\cap \ell)=2$, and this holds for each of the $\ha(q+1)$ external lines to $\cC^2$ through $P$. Furthermore, if $t$ is the tangent to $\cC^2$ (and $\cH_q)$ at $P$ then $I(P,\cC_\lambda\cap t)=q+1$ as $I(P,\cH_q\cap t)=q+1$, $I(P,\cC_0\cap t)=q+1$ and $\Lambda$ is generated by $\cH_q$ and $\cC_0$. Therefore, $P$ is a double point of $\cC_\lambda$, and $t$ is the unique tangent to $\cC_\lambda$ at $P$. Thus, $\cC_\lambda$ has at least $\ha q(q-1)$ double points. As $\deg(\cC_\lambda)=q+1$, this implies that $\cC_\lambda$ is a rational curve.  Finally, $\lambda=-1$. In fact, the line $Y=0$ meets $\cC_{-1}$ in the points $Q=(\xi:0:1)$ with $(-\xi)^{(q-1)/2}=-1$ other than the origin $O=(0:0:1)$ and $X_\infty=(1:0:0)$. Apart from the latter points lying on $\cC^2$, $Q$ is an internal point of $\cC^2$.
  \end{proof}
 \begin{theorem}
 \label{th131021} For $\lambda\not\in \{0,1,-1\}$, the full automorphism group $\aut(\cC)$ of the nonsingular plane curve $\cC_\lambda$ with homogeneous equation (\ref{eq0}) is isomorphic to $\PGL(2,q)$.
 \end{theorem}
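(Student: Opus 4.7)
Let $G := \aut(\cC_\lambda)$. The curve $\cC_\lambda$ is nonsingular of degree $q+1 \geq 4$ by Lemma \ref{lem111021B}(ii), so B.~Segre's theorem (\cite[Theorem 11.29]{hirschfeld-korchmaros-torres2008}) yields $G \leq \PGL(3,K)$. Since $\PGL(2,q) \leq G$ by construction, the task is to prove the reverse inclusion. The plan is to isolate a $G$-invariant subset of $\cC_\lambda$ of intrinsic origin, use it to force $G$ to preserve the conic $\cC^2$, and then conclude by a calculation inside $\PGL(2,K)$.

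Let $\Phi$ denote the set of points $P$ of $\cC_\lambda$ for which the tangent line $t_P$ meets $\cC_\lambda$ only at $P$; equivalently, $I(P,t_P\cap \cC_\lambda) = q+1$. Since both the tangent and the intersection multiplicity are intrinsic invariants of the embedded curve, $\Phi$ is $G$-invariant. I would establish the equality $\Phi = \cC^2 \cap \PG(2,q)$, a set of cardinality $q+1$. For the inclusion $\supseteq$, by the transitivity of $\PGL(2,q)$ on $\cC^2 \cap \PG(2,q)$ it suffices to check it at one representative, say $P_0 = (0:0:1)$: a direct gradient computation gives $X=0$ as the tangent to $\cC_\lambda$ at $P_0$, and
\[
F(0,Y,Z) = (1-\lambda)\,Y^{q+1},
\]
so for $\lambda \neq 1$ this tangent meets $\cC_\lambda$ only at $P_0$, with multiplicity $q+1$. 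For the inclusion $\subseteq$, suppose $P \in \Phi$; then $t_P$ meets $\cC_\lambda$ at a single point, so by Lemma \ref{lem111021B}(iii), $t_P$ is the tangent to $\cC^2$ at some $Q \in \PG(2,q)$. Since $\cC^2 \cap \PG(2,q)$ consists of base points of the pencil $\Lambda$ and hence lies on $\cC_\lambda$, $Q \in t_P \cap \cC_\lambda = \{P\}$, forcing $P = Q \in \cC^2 \cap \PG(2,q)$.

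To show $G$ preserves $\cC^2$, observe that at each $P \in \Phi$ the curves $\cC^2$ and $\cC_\lambda$ share the same tangent, so for any $g \in G$ the conic $g(\cC^2)$ passes through $g(\Phi) = \Phi$ with tangent $g(t_P) = t_{g(P)}$ at each such point. Thus $I(P,\cC^2 \cap g(\cC^2)) \geq 2$ at every $P \in \Phi$, giving $\sum_{P\in \Phi} I(P,\cC^2\cap g(\cC^2)) \geq 2(q+1) > 4$; by B\'ezout, $\cC^2$ and $g(\cC^2)$ share a component, whence $g(\cC^2) = \cC^2$. The restriction homomorphism $G \to \aut(\cC^2) \cong \PGL(2,K)$ has trivial kernel, because any element of $\PGL(3,K)$ fixing the $q+1 \geq 4$ points of $\Phi$ (no three collinear, as they lie on an irreducible conic) is the identity. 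Under the identification $\cC^2 \cong \PG(1,K)$, the set $\Phi$ corresponds to $\PG(1,\fq)$, whose setwise stabilizer in $\PGL(2,K)$ is $\PGL(2,\fq) = \PGL(2,q)$. Hence $G \leq \PGL(2,q)$, and combined with $\PGL(2,q) \leq G$ this yields $G = \PGL(2,q)$.

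The hard part will be the identification $\Phi = \cC^2 \cap \PG(2,q)$. The inclusion $\supseteq$ is a direct computation relying only on $\lambda \neq 1$, whereas $\subseteq$ rests essentially on Lemma \ref{lem111021B}(iii). This is precisely the geometric feature that breaks down for the excluded values of $\lambda$: for $\lambda = 0$ the curve coincides with the Hermitian curve $\cH_q$, which has far more total inflections, while $\lambda = \pm 1$ yield the singular or reducible curves of Proposition \ref{pro090921}.
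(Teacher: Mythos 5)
Your proof is correct and follows essentially the same route as the paper: Segre's theorem gives linearity, Lemma \ref{lem111021B}(iii) produces an $\aut(\cC_\lambda)$-invariant set of $q+1$ points (the $\fq$-rational points of $\cC^2$), and the group is then pinned down inside $\PGL(2,q)$. The only notable difference is that your tangency-plus-B\'ezout argument for the invariance of $\cC^2$ works uniformly for all $q\ge 3$, whereas the paper treats $q=3$ separately (four points do not determine a conic, so it falls back on $\aut(\cC_\lambda)\le \mathrm{Sym}_4\cong\PGL(2,3)$) and invokes ``five points determine a conic'' only for $q\ge 5$.
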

 \begin{proof} By Lemma \ref{lem111021B}(i) $\aut(\cC_\lambda)$ contains a subgroup isomorphic to $\PGL(2,q)$.  Since $\cC_\lambda$ is a nonsingular plane curve of degree $>3$, $\aut(\cC)$ is linear,  see \cite[Theorem 11.29]{hirschfeld-korchmaros-torres2008}. From Lemma \ref{lem111021B}(iii), the set consisting of the $q+1$ $\fq$-rational points of the conic $\cC^2$ is invariant under the action of $\aut(\cX)$. For $q=3$, this set is a non-degenerate quadrangle and hence $\aut(\cX)$ is a subgroup of $\rm{Sym}_4$. Since $\rm{Sym}\cong PGL(2,3)$ the claim follows. If $q\ge 5$, an irreducible conic is uniquely determined by any five of its points, and hence $\cC^2$ is itself $\aut(\cX)$-invariant. Therefore, $\aut(\cC_\lambda)$ is isomorphic to a subgroup of $\PGL(2,q)$, and the claim follows.
 \end{proof}
 \begin{proposition}
 \label{pro241021} For  $\lambda\neq -1,\infty$, $\aut(\cC_\lambda)$ has exactly three short orbits on $\cC_\lambda$, two of length $\ha (q^3-q)$ and one consisting of the points of $\cC^2$.
 \end{proposition}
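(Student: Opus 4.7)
The plan is to enumerate the three short orbits by fixed-point analysis of each conjugacy class of $G=\aut(\cC_\lambda)\cong\PGL(2,q)$, acting on $\PG(2,K)$ via its symmetric-square representation preserving $\cC^2$. First I verify that the $q+1$ points of $\cC^2\cap\PG(2,q)$ lie on $\cC_\lambda$ by substituting the parametrization $(s^2/2:s:1)$ and $(1:0:0)$ into (\ref{eq0}); since the induced $G$-action on this set is the standard doubly transitive $\PGL(2,q)$-action on $\PG(1,q)$, they form a single $G$-orbit with point stabilizer a Borel subgroup of order $q(q-1)$. Next I rule out contributions from elements of order greater than $2$ to fixed points on $\cC_\lambda\setminus\cC^2(\fq)$. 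A unipotent element has a unique fixed point in $\PG(2,K)$, which lies on $\cC^2(\fq)$. A non-involutory split semisimple element fixes two points of $\cC^2(\fq)$ and a third external point belonging to $\cC_1$ by Proposition \ref{pro090921}(i), not to $\cC_\lambda$ for $\lambda\neq 1$. A non-involutory non-split semisimple element fixes two points of $\cC^2(\fqq)\setminus\cC^2(\fq)$, which the substitution $(s^2/2:s:1)\mapsto -\ha(s-s^q)^2$ shows are off $\cC_\lambda$, plus an internal point belonging to $\cC_{-1}$ by Proposition \ref{pro090921}(ii) and hence off $\cC_\lambda$ for $\lambda\neq -1$. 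Thus any short orbit outside the conic arises from involutions alone.

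The two half-orbits then come from the two classes of involutions. A split involution $\iota$ has for fixed locus an $\fq$-secant of $\cC^2$ plus an isolated external point (the latter off $\cC_\lambda$), so B\'ezout gives $q+1$ intersections of the fixed line with $\cC_\lambda$: two on $\cC^2$ and $q-1$ off. Taking $\iota=\delta_{-1}$ and restricting (\ref{eq0}) to $Y=0$, the off-conic points are $(u:0:1)$ with $u^{q-1}+1+\lambda(-2)^{(q+1)/2}u^{(q-1)/2}=0$; the centralizer $C_G(\iota)$, dihedral of order $2(q-1)$ generated by $\{\delta_b\}$ and $\tau$, acts via $u\mapsto b^2u$ and $u\mapsto 1/u$. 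A Vieta computation on the quadratic $v^2+\lambda(-2)^{(q+1)/2}v+1=0$ with $v=u^{(q-1)/2}$ shows this action is transitive on the $q-1$ points, since the roots $v_0,v_0^{-1}$ are swapped by $\tau$ while the squares $\{\delta_{b^2}\}$ act transitively on each fiber. A direct matrix check in the symmetric-square representation then confirms that a generic such point has stabilizer exactly $\langle\iota\rangle$, producing a $G$-orbit of size $|G|/2=(q^3-q)/2$. The parallel argument for a non-split involution (fixed locus: an external line of $\cC^2$ plus an internal point off $\cC_\lambda$; normalizer dihedral of order $2(q+1)$ acting transitively on the $q+1$ fixed points on the line) yields the second half-orbit of size $(q^3-q)/2$. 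The three orbits are pairwise distinct, the conic by $G$-invariance and the two half-orbits by the type of the stabilizing involution.

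The main technical hurdle is proving transitivity of the centralizer on the non-conic fixed points of an involution, accomplished by the explicit $Y=0$ computation above and its non-split analogue. A secondary delicate point is verifying that the stabilizer is generically exactly $C_2$: any additional element in $G_P$ would force a rationality condition such as $u\in\fq$, imposing a polynomial constraint on $\lambda$ satisfied only by finitely many values (those for which $u=\pm 1$ is a root of the relevant polynomial), each absorbed by the excluded cases.
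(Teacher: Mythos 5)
Your argument is correct and rests on the same three pillars as the paper's proof: the classification of fixed points of elements of $\PGL(2,q)$ in the conic action, Proposition \ref{pro090921} (via the pencil $\Lambda$) to show that $\cC_\lambda$ meets $\PG(2,q)$ only in the points of $\cC^2$, and the axes of the two conjugacy classes of involutions as the carriers of the remaining short-orbit points. The execution of the two key sub-steps differs. The paper proves that the axis of an involution must meet $\cC_\lambda$ outside $\PG(2,q)$ by a B\'ezout contradiction, and then gets ``exactly two further orbits of length $\ha(q^3-q)$'' from the order-$2$ stabilizer together with the two conjugacy classes of involutions; you instead compute the off-conic points of the axis explicitly and prove that the centralizer of the involution permutes them transitively (the Vieta/coset argument on $v=u^{(q-1)/2}$ for $\iota=\delta_{-1}$). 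Your route is more work but yields sharper information --- an explicit description of the two half-orbits --- and it also treats more carefully than the paper the fixed points of non-split semisimple elements lying on $\cC^2$ over $\fqq$ but not over $\fq$, which your substitution $(s^2/2:s:1)\mapsto -\ha(s-s^q)^2$ correctly shows to be off $\cC_\lambda$. Two places should be tightened. First, ``generically exactly $C_2$'' is too weak, since the statement is for every admissible $\lambda$; your own computation closes this, because an extra element of the stabilizer forces $u\in\fq$, hence $v=u^{(q-1)/2}=\pm1$, hence $\lambda(-2)^{(q+1)/2}=\mp2$, and a short check (according as $-2$ is or is not a square in $\fq$) shows this happens only for $\lambda=\pm1$, which are excluded; alternatively, a point off $\PG(2,q)$ cannot be fixed by two distinct involutions, since their axes are distinct $\fq$-rational lines meeting in $\PG(2,q)$. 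Second, in the non-split case, transitivity of the dihedral group of order $2(q+1)$ on the $q+1$ points of $\ell\cap\cC_\lambda$ presupposes that these points are distinct and that none is fixed by an element of the cyclic subgroup of order $>2$; this holds because such elements fix on $\ell$ only the two points of $\cC^2$ over $\fqq$, which you have already shown to be off $\cC_\lambda$.
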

 \begin{proof} From Theorem \ref{th131021}, $\aut(\cC_\lambda)=\PGL(2,q)$. We show that a non-trivial element of $\PGL(2,q)$ has a fixed point outside $\PG(2,q)$ if and only if it is an involution. From Dickson's classification of subgroups of $\PGL(2,q)$, see \cite[Theorem A.8, Remark 11.115]{hirschfeld-korchmaros-torres2008}, $\PGL(2,q)$ has a partition in subgroups with three components up to conjugacy, namely (a) a subgroup of order $q$ whose non-trivial elements fix exactly one point $P$ and $P\in\cC^2$, (b) a cyclic group of order $q\pm 1$ whose non-trivial elements of order larger than two fix exactly one point $P\in \PG(2,q)\setminus \cC^2$. Furthermore, any involution of $\PGL(2,q)$ is a homology with center in $\PG(2,q)$ whose axis $\ell$ is a line of $\mathbb{F}_q$. Therefore, the common points of $\ell$ and $\cC_\lambda$ are fixed points of the involution. Assume that $\ell\cap\cC_\lambda$ only consists of points in $\PG(2,q)$. If a point $P\not\in\cC^2$ is in the intersection then $P$ is not a base point of the pencil $\Lambda$ and hence there exists a unique curve in $\Lambda$ passing through $P$. From Proposition \ref{pro090921}, that curve must be either $\cC_1$, or $\cC_{-1}$, a contradiction. Therefore $\ell\cap \cC_\lambda=
\ell\cap \cC^2=\{Q_1,Q_2\}$. Observe that $\ell$ is not a tangent to $\cC^2$, and hence neither to $\cC_\lambda$ at $Q_i$. Also, $Q_i$ is a nonsingular point. Therefore, $I(Q_i,\ell\cap \cC_\lambda)=1$. Since $\deg(\cC_\lambda)>2$, this contradicts B\'ezout's theorem. Moreover, let $P\in \ell\cap \cC_\lambda$ with $P\not\in\PG(2,q)$. Then the stabilizer of $P$ in $\PGL(2,q)$ has order $2$ and hence the orbit of $P$ under the action of $\PGL(2,q)$ has length $\ha (q^3-q)$. Since $\PGL(2,q)$ has exactly two conjugacy classes of involutions, the claim follows.
 \end{proof}
 From the above proposition and discussion, the following two theorems follow.
\begin{theorem}
\label{the00921} $d(\PGL(2,q))=2$, $\varepsilon(\Delta_q)=q-1$, and the irreducible conic of equation $Y^2-XZ=0$ is the unique $\PGL(2,q)$-invariant irreducible plane curve of degree smaller than $q+1$.
\end{theorem}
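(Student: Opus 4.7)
The plan is to prove three assertions: $d(\PGL(2,q))=2$; no $\PGL(2,q)$-invariant irreducible plane curve has degree $d$ with $3 \le d \le q$; and $\varepsilon(\PGL(2,q)) = q-1$, which will follow from the second assertion combined with the existence of a $\PGL(2,q)$-invariant irreducible curve of degree $q+1$, namely $\cH_q$ (or any $\cC_\lambda$ with $\lambda \notin \{0,1,-1\}$, cf.\ Lemma \ref{lem111021B}(ii)). The value $d(\PGL(2,q))=2$ is attained by $\cC^2$; that no line is $\PGL(2,q)$-invariant is immediate from the fact that the three $\PGL(2,q)$-orbits of lines of $\PG(2,q)$ (tangents, secants, and externals to $\cC^2$) all have size greater than one, while a non-$\fq$-rational invariant line would, via the action of Frobenius on its image together with the $\fq$-rationality of $\PGL(2,q)$, produce further invariant lines and yield a contradiction with the same orbit analysis.

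For the heart of the proof, suppose $\cD$ is a $\PGL(2,q)$-invariant irreducible plane curve of degree $d$ with $3 \le d \le q$. The main tool is the $\PGL(2,q)$-fixed pencil $\Lambda$ of Lemma \ref{lem111021B}, combined with the orbit analysis extracted from the proof of Proposition \ref{pro241021}: every nontrivial element of $\PGL(2,q)$ fixing a point $P \in \PG(2,K) \setminus \PG(2,q)$ is an involution, whose fixed locus in $\PG(2,K)$ consists of its center (in $\PG(2,q)$) together with its axis, an $\fq$-rational line. Hence a point $P \in \PG(2,K) \setminus \PG(2,q)$ has $\PGL(2,q)$-orbit of the full length $q(q^2-1)$ whenever $P$ lies on no $\fq$-rational line.

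Let $\cL$ denote the reducible curve of degree $q^2+q+1$ formed by all lines of $\PG(2,q)$. Since $\cD$ is irreducible of degree $d \ge 3$, it shares no component with $\cL$, so $\cD \cap \cL$ is finite by Bézout. Being infinite, $\cD$ therefore contains a point $P$ lying off $\PG(2,q) \cup \cL$ that is not among the finitely many base points of $\Lambda$ (which all lie in $\cC^2 \cap \PG(2,q)$). Then $P$ lies in a $\PGL(2,q)$-orbit of length $q(q^2-1)$, and the unique member $\cC_\lambda \in \Lambda$ through $P$, being $\PGL(2,q)$-invariant, contains this entire orbit. By Lemma \ref{lem111021B}(ii) and Proposition \ref{pro090921}, the irreducible components of every curve in $\Lambda$ have degree $1$, $2$, or $q+1$ ($\cC_\lambda$ is nonsingular of degree $q+1$ for $\lambda \notin \{1,-1\}$; $\cC_{-1}$ is an irreducible rational curve of degree $q+1$; $\cC_1$ splits into $\fq$-rational lines; and $\cC_\infty$ is a power of $\cC^2$). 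Since $3 \le d \le q < q+1$, $\cD$ is not a component of $\cC_\lambda$, and Bézout applied to $\cD$ and $\cC_\lambda$ yields
\[
d(q+1) \ge q(q^2-1),
\]
so $d \ge q(q-1) \ge q+1$ for $q \ge 3$, contradicting $d \le q$. The main obstacle I anticipate is certifying that the $\cC_\lambda$ in play cannot share a component with $\cD$; this is handled by the component-degree dichotomy $\{1,2,q+1\}$ for members of $\Lambda$, together with the fact that the exceptional values $\lambda=1$ and $\lambda=\infty$ are automatically avoided because $P$ was chosen off $\cL$ and off $\PG(2,q)$.
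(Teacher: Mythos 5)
Your proof is correct and follows essentially the same route as the paper, which derives Theorem \ref{the00921} from the $\PGL(2,q)$-fixed pencil $\Lambda$ of Lemma \ref{lem111021B}, the B\'ezout bound against a long orbit, and the classification of the members of $\Lambda$ (nonsingular of degree $q+1$, the rational $\cC_{-1}$, the split $\cC_1$, and $\cC_\infty$); you merely make explicit the ``discussion'' the paper leaves implicit. One small caveat: the claim that only involutions fix points of $\PG(2,K)\setminus\PG(2,q)$ is not literally true --- an element of order greater than $2$ dividing $q+1$ fixes two points of $\cC^2$ lying in $\PG(2,q^2)\setminus\PG(2,q)$ --- but since every point of $\PG(2,q^2)\setminus\PG(2,q)$ lies on an $\fq$-rational line, your actual conclusion (full orbit for points off $\PG(2,q)$ and off all $\fq$-rational lines) survives unchanged.
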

 \begin{theorem}\label{thm:pgl2A}
If $\cC$ is a $\PGL(2,q)$-invariant irreducible plane curve of degree $2<d<q^2-q$ then $\cC$ is either the Hermitian curve, or it has homogeneous equation (\ref{eq0}) with $\lambda\in K$.
\end{theorem}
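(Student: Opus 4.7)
The plan is to mimic the fixed-pencil plus B\'ezout strategy used repeatedly in the paper, applied to the $\PGL(2,q)$-fixed pencil $\Lambda$ whose members $\cC_\lambda$ have equation (\ref{eq0}), as set up in Lemma \ref{lem111021B}. The goal is to force any $\PGL(2,q)$-invariant irreducible $\cC$ of degree $d$ with $2<d<q^2-q$ to be a component of some curve of $\Lambda$, and then read off the possibilities.

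First I would pick a point $P\in\cC$ satisfying two conditions: $P$ is not a base point of $\Lambda$, and the stabilizer of $P$ in $\PGL(2,q)$ is trivial. The base points of $\Lambda$ are the $q+1$ points of $\cC^2\cap \PG(2,q)$, a finite set. The locus $Z\subset \PG(2,K)$ of points with non-trivial $\PGL(2,q)$-stabilizer is a finite union of axes of involutions---each an $\fq$-rational line by the description recalled in the proof of Proposition \ref{pro241021}---together with finitely many isolated fixed points of non-involutory elements. Since $\cC$ is irreducible of degree $d>2$, it is not a line, hence is not contained in $Z$, so such $P$ exists. Its $\PGL(2,q)$-orbit $\cO$ has length $|\PGL(2,q)|=q(q^2-1)$ and lies in $\cC$ by $\PGL(2,q)$-invariance.

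Next, since $P$ is not a base point there is a unique $\cC_\lambda\in\Lambda$ through $P$, and by Lemma \ref{lem111021B}(i) each member of $\Lambda$ is individually $\PGL(2,q)$-invariant, so $\cO\subset\cC_\lambda$ as well. If $\cC$ shared no component with $\cC_\lambda$, B\'ezout's theorem would yield
\[
d(q+1)\ =\ \deg(\cC)\deg(\cC_\lambda)\ \ge\ |\cO|\ =\ q(q^2-1),
\]
giving $d\ge q^2-q$ and contradicting the hypothesis. Therefore $\cC$ is a component of $\cC_\lambda$. To conclude, I would read off the structure of $\cC_\lambda$: for $\lambda\notin\{1,-1\}$ Lemma \ref{lem111021B}(ii) says $\cC_\lambda$ is nonsingular and hence irreducible, so $\cC=\cC_\lambda$; for $\lambda=1$, Proposition \ref{pro090921}(i) describes $\cC_1$ as a union of $q+1$ lines, none of which has degree $d>2$; for $\lambda=-1$, Proposition \ref{pro090921}(ii) identifies $\cC_{-1}$ as a rational, hence irreducible, curve, so $\cC=\cC_{-1}$. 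In every case $\cC$ has equation (\ref{eq0}) for some $\lambda\in K$, with the Hermitian curve appearing as the distinguished member $\lambda=0$.

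The main obstacle is the first step: guaranteeing the existence of $P\in\cC$ with trivial stabilizer. This rests on the classification of cyclic subgroups of $\PGL(2,q)$ and their fixed loci in $\PG(2,K)$ already exploited in the proof of Proposition \ref{pro241021}, the key point being that the only stabilizer subgroups with a one-dimensional fixed locus are those generated by involutions, whose axes form only finitely many $\fq$-rational lines and cannot contain the irreducible curve $\cC$ of degree $>2$. Once this is granted, the rest of the argument is a routine application of B\'ezout, completely parallel to Lemma \ref{lem4set21}.
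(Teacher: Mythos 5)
Your proposal is correct and is essentially the argument the paper leaves implicit when it says the theorem ``follows from the above proposition and discussion'': the fixed-pencil plus B\'ezout scheme of Lemma \ref{lem4set21} applied to $\Lambda$, followed by reading off the reducible members via Lemma \ref{lem111021B} and Proposition \ref{pro090921}. The only cosmetic omission is the case $\lambda=\infty$, where $\cC_\infty$ is the repeated conic and is excluded at once by $d>2$.
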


\section{Plane curves with nonlinear automorphism group isomorphic to $\PGL(2,q)$}
\label{hemis} In \cite{KNS2017}, the irreducible plane curve $\cK$ with affine equation (\ref{eq9oct21}) came up as a useful tool in the construction of a new family of hemisystems of the Hermitian surface. In Section   \ref{BKSc} we show some more of its properties, namely $\cK$ is an unramified  degree-2 cover of the Hermitian curve $\cH_q$ and the automorphism group $G$ of $\cK$ is isomorphic to $\PGL(2,q)\times C_2$.  It was observed in the proof of \cite[Lemma 5.3]{KNS2017} that $G$ is generated by the following maps.
\begin{enumerate}
\label{formula}
\item[]$\beta_1:(X,Y) \mapsto (X+\alpha, Y+\alpha), \alpha \in \fq$;
\item[] $\beta_2:(X,Y)\mapsto (\mu X, \mu Y), \mu \in \fq^*$;
\item[] $\beta_3:(X,Y) \mapsto (X^{-1},Y^{-1})$;
\item[] $\beta_4:(X,Y)\mapsto (Y,X)$.
\end{enumerate}
More precisely, $\beta_1,\beta_2,\beta_3$ generate a group $H\cong \PGL(2,q)$ and $\beta_4$ centralizes $H$.
Clearly, $\beta_1,\beta_2,\beta_4$ are in $\PGL(3,q)$ but this does not hold true for $\beta_3$ as the homogeneous equation of $\beta_3$  is $(X,Y,Z)\mapsto (YZ,XZ,XY)$. Therefore, $\cK$ is an irreducible plane curve which is left invariant under a nonlinear $\PGL(2,q)$.
\begin{proposition}
\label{pro101021} The subgroup $G\cap \PGL(3,q)$ has order $2q(q-1)$.
\end{proposition}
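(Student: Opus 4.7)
The plan is to show that $\langle \beta_1,\beta_2,\beta_4\rangle$ already exhausts $G\cap\PGL(3,q)$, with the remaining elements of $G$ ruled out by a Bruhat-decomposition argument. First, a direct substitution shows that $T:=\langle\beta_1\rangle\cong(\fq,+)$ has order $q$, that $D:=\langle\beta_2\rangle\cong\fq^*$ has order $q-1$, and that $\beta_2(\mu)\beta_1(\alpha)\beta_2(\mu)^{-1}=\beta_1(\mu\alpha)$. Since $T\cap D=\{1\}$, the group $B:=TD$ is a semidirect product $T\rtimes D$ of order $q(q-1)$. A further short calculation gives $\beta_4\beta_1(\alpha)\beta_4=\beta_1(\alpha)$ and $\beta_4\beta_2(\mu)\beta_4=\beta_2(\mu)$, so $\beta_4$ centralizes $B$; as $\beta_4\notin B$, the product $B\times\langle\beta_4\rangle$ has order $2q(q-1)$. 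All three generators $\beta_1,\beta_2,\beta_4$ are linear (their defining formulas are displayed immediately before the proposition), so $B\times\langle\beta_4\rangle\subseteq G\cap\PGL(3,q)$.

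For the opposite inclusion I invoke the Bruhat decomposition of $H\cong\PGL(2,q)$. Since $|B|=q(q-1)$ equals the order of a Borel subgroup of $\PGL(2,q)$ and $B$ is solvable, $B$ is a Borel subgroup of $H$; the involution $\beta_3$ lies outside $B$, so $H=B\sqcup B\beta_3 B$. Combined with $G=H\times\langle\beta_4\rangle$, this shows that every element of $G\setminus(B\times\langle\beta_4\rangle)$ has the form $g=b_1\beta_3 b_2\beta_4^{i}$ with $b_1,b_2\in B$ and $i\in\{0,1\}$. If any such $g$ lay in $\PGL(3,q)$, then $\beta_3=b_1^{-1}g\,\beta_4^{-i}b_2^{-1}$ would be a product of linear maps, hence itself linear. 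But $\beta_3$ is the standard quadratic Cremona transformation $(X:Y:Z)\mapsto(YZ:XZ:XY)$, whose three components are pairwise coprime forms of degree $2$, so $\beta_3\notin\PGL(3,K)$. This contradiction gives the equality $G\cap\PGL(3,q)=B\times\langle\beta_4\rangle$, whence $|G\cap\PGL(3,q)|=2q(q-1)$.

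The main obstacle — and the only non-routine point — is the identification of $B=\langle\beta_1,\beta_2\rangle$ with a Borel subgroup of the abstract group $H\cong\PGL(2,q)$ asserted in the paragraph preceding the proposition; this is standard, since any subgroup of $\PGL(2,q)$ of order $q(q-1)$ must be conjugate to the upper-triangular Borel. Once this step is in place, the Bruhat trichotomy forces any further linear element of $G$ to imply the linearity of $\beta_3$ itself, which is visibly false from its homogeneous equation.
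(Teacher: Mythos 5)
Your proof is correct and takes essentially the same route as the paper: exhibit the linear subgroup $B\times\langle\beta_4\rangle$ of order $2q(q-1)$ and then rule out any further linear elements because they would force $\beta_3$ itself to be linear, which its degree-$2$ homogeneous representation $(X:Y:Z)\mapsto(YZ:XZ:XY)$ excludes. The paper compresses the second step by invoking the maximality of $L=B$ in $H\cong\PGL(2,q)$ instead of writing out the Bruhat double-coset decomposition, but the two arguments are interchangeable here.
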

\begin{proof} Let $L=\{\beta_1\beta_2|\,\alpha\in \fq,\mu \in \fq^*\}$. Then $L$ is a subgroup of $H$ of order $q(q-1)$. In particular, $L$ is a maximal subgroup of $H$. Therefore, the elements of $H$ other than those in $L$ are not in $\PGL(3,q)$.  Since $\beta_4\in \PGL(3,q)$ and each element of $G$ is a product of $\beta_4$ and element from $H$, the claim follows.
\end{proof}
Let $\Lambda$ be the pencil generated by $\cK$ together with the plane curve $(X-Y)^{q+1}Z^{q-1}=0$.
\begin{lemma}
\label{lem091021} The pencil $\Lambda$ consists of all plane curves $\cC_\lambda$ with affine equation
\begin{equation}
\label{eq00}
(X+Y)^{q+1}-2((XY)^{q}+XY)-\lambda (X-Y)^{q+1}=0, \quad \lambda \in K\cup \{\infty\}.
\end{equation}
 Furthermore,
\begin{itemize}
\item[(i)] $\Lambda$ is a $G$-fixed pencil.
\item[(ii)] $\cC_1$ splits into $2q$ lines, namely the lines of equations $X=\xi Z$ and $Y= \xi Z$ when $\xi$ ranges over $\fq$.
\item[(iii)] For $\lambda\neq 1,-1,\infty$, $\cC_\lambda$ is an irreducible plane curve with $q+2$ singular points. Two of them are ($q-1$)-fold points each with a unique tangent line and the remaining $q$ are nodal double points. The intersection multiplicity of $\cC_\lambda$ with its tangent line at a $(q-1)$-fold point is equal to $q$.
\item[(iv)] $\cC_{-1}$ has two irreducible components, namely the rational curves of equation $YZ^{q-1}-X^q=0$ and $XZ^{q-1}-Y^q=0$, respectively.
\end{itemize}
\end{lemma}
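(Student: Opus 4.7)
The plan is to handle the pencil description by direct inspection, verify (i) on each of the generators $\beta_1,\beta_2,\beta_3,\beta_4$, reduce (ii) and (iv) to polynomial identities for $(X+Y)^{q+1}\pm(X-Y)^{q+1}$, and in (iii) perform the singularity analysis directly from partial derivatives before tackling irreducibility, which will be the main obstacle. For (i), the three linear generators $\beta_1,\beta_2,\beta_4$ multiply the polynomial defining each $\cC_\lambda$ by a nonzero scalar: one uses repeatedly that in $\fq$ one has $\alpha^q=\alpha$ (in particular $(2\alpha)^q=2\alpha$ and $\mu^{q+1}=\mu^2$), so the mixed terms produced by the translation $\beta_1$ and the scaling of $\beta_2$ cancel, and the symmetry of $\beta_4$ is immediate (since $(-1)^{q+1}=1$ for $p$ odd). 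For $\beta_3:(X,Y)\mapsto(1/X,1/Y)$, one substitutes and clears denominators by multiplying through by $(XY)^{q+1}$ to recover the original equation.

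For (ii), the identity $(X+Y)^{q+1}-(X-Y)^{q+1}=2XY(X^{q-1}+Y^{q-1})$ (derived from $(X\pm Y)^{q+1}=(X^q\pm Y^q)(X\pm Y)$ in characteristic $p$ odd) gives
\[\cC_1:\ -2XY\bigl(1-X^{q-1}\bigr)\bigl(1-Y^{q-1}\bigr)=-2(X^q-X)(Y^q-Y)=0,\]
a product of the $2q$ stated lines. For (iv), the companion identity $(X+Y)^{q+1}+(X-Y)^{q+1}=2(X^{q+1}+Y^{q+1})$ gives $\cC_{-1}: X^{q+1}+Y^{q+1}-XY-(XY)^q=(X-Y^q)(X^q-Y)$, which homogenizes to the two rational curves in the statement.

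For the singular locus in (iii), the partials $F_X,F_Y,F_Z$ (using $q+1\equiv 1$ and $2q\equiv 0\pmod p$) give, via $F_X+F_Y=0$ and $F_X-F_Y=0$, that $X+Y\in\fq$ and either $X=Y$ or $(X-Y)^{q-1}=1/\lambda$; combining with $F=F_Z=0$, the second alternative forces $XY\in\fq$ and then $\lambda\in\{1,-1\}$, both excluded. Hence the affine singular points are exactly $(t:t:1)$ with $t\in\fq$. Since $F_\lambda|_{Z=0}=-2(XY)^q$, the only points at infinity on $\cC_\lambda$ are $(1:0:0)$ and $(0:1:0)$. Dehomogenizing at $(1:0:0)$ by $X=1$, the lowest-order term of $F(1,Y,Z)$ at the origin is $(1-\lambda)Z^{q-1}$, which is nonzero since $\lambda\ne 1$; this gives multiplicity $q-1$ with unique tangent $Z=0$, and the evaluation $F(1,Y,0)=-2Y^q$ yields $I((1:0:0),\cC_\lambda\cap\{Z=0\})=q$. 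The analysis at $(0:1:0)$ is symmetric via $\beta_4$. At $(t:t:1)$ with $t\in\fq$, applying $\beta_1$ with parameter $-t$ moves the point to the origin and preserves $\cC_\lambda$ by (i), so the local structure is the same as at the origin, where the lowest-order term of $F_\lambda(X,Y,1)$ is $-2XY$; this splits into two distinct linear forms, so each $(t:t:1)$ is a node.

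The main obstacle is irreducibility. The relation $(1-\lambda)Z^{q-1}=2Y^q+\cdots$ near $(1:0:0)$ shows the branch has semigroup $\langle q-1,q\rangle$, which is unibranched because $\gcd(q-1,q)=1$; similarly at $(0:1:0)$. Since every plane curve meets $\{Z=0\}$ and $\cC_\lambda\cap\{Z=0\}=\{(1:0:0),(0:1:0)\}$, each irreducible component of $\cC_\lambda$ contains at least one of these two points, while the unibranched property forces each such point to lie on at most one component. Hence $\cC_\lambda$ has at most two irreducible components. Assuming $\cC_\lambda=A\cup B$, one deduces $\deg A=\deg B=q$ from the inequality $I\ge q$ of each component with $\{Z=0\}$, and $B=\beta_4(A)$. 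Writing the general form $A=cXZ^{q-1}+\alpha Y^q+Z\tilde A(Y,Z)$ dictated by the singularity at $(1:0:0)$, and matching in $AB=F_\lambda$ the coefficients of $X^qY^q$, $XYZ^{2q-2}$, and $X^{q+1}Z^{q-1}$, I obtain $\alpha^2=c^2=-2$ and $c\alpha=1-\lambda$, whence $c=\pm\alpha$ and $\lambda\in\{-1,3\}$. The value $\lambda=-1$ recovers the factorization of (iv). The value $\lambda=3$ is eliminated as follows: since $c\ne 0$, the tangent of $A$ at the node $(0:0:1)$ is forced to be $X=0$, which makes the coefficient $a_1$ of $YZ^{q-2}$ in $\tilde A$ vanish; an explicit coefficient check then shows that $AB$ has no contribution to the monomial $X^qYZ^{q-1}$, contradicting the fact that its coefficient in $F_3$ is $1+\lambda=4\ne 0$. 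This rules out $\lambda=3$ and completes the proof that $\cC_\lambda$ is irreducible for $\lambda\notin\{1,-1,\infty\}$.
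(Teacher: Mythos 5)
Your proof is correct and, on the two substantive items, takes a genuinely different route from the paper's. For the irreducibility in (iii) the paper works at the affine node $O=(0:0:1)$: it shows every component must pass through $O$ (via the line $Y=0$, which meets $\cC_\lambda$ only at $O$ and $X_\infty$), gets at most two components because $O$ is a double point, uses the order-$p$ translation $\beta_1$ to force each component to be $\beta_1$-invariant of degree $q$, and then pins its equation down to $Y+cX^q=0$. You instead work at infinity: $\cC_\lambda\cap\ell_\infty=\{X_\infty,Y_\infty\}$ and each of these points is unibranch, so each lies on a single component, giving at most two components of degree $q$ swapped by $\beta_4$. This avoids the group action in the counting step but leans on the unibranch claim, which you should justify explicitly rather than by invoking ``the semigroup $\langle q-1,q\rangle$'': either via the Newton-polygon criterion (a single edge from $(0,q-1)$ to $(q,0)$ with $\gcd(q-1,q)=1$ and no interior monomials), or more elementarily by noting that branches at $X_\infty$, all tangent to $Z=0$, have multiplicities summing to $q-1$ and tangential intersection numbers summing to $q$, so there can be only one. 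For (iv) you factor $\cC_{-1}$ directly from $(X+Y)^{q+1}+(X-Y)^{q+1}=2(X^{q+1}+Y^{q+1})$, whereas the paper obtains it as the terminal case of the reducibility analysis; your version is cleaner. Two small repairs are needed in the coefficient matching: the coefficient of $XYZ^{2q-2}$ in $AB$ is $c^2+a_1^2$, not $c^2$, so the deduction ``$c^2=-2$, hence $\lambda\in\{-1,3\}$'' is premature; and $B=\beta_4(A)$ holds only up to a scalar, which you should normalize away by rescaling $A$. Neither affects the conclusion: $c\neq 0$ already follows from $c\alpha=1-\lambda\neq 0$, the node-tangent argument at $O$ then yields $a_1=0$, and your final computation --- the coefficient of $X^qYZ^{q-1}$ in $AB$ equals $\alpha a_1=0$ while it equals $1+\lambda$ in $F_\lambda$ --- rules out every $\lambda\neq -1$ at once, so the detour through $c^2=-2$ and $\lambda=3$ can simply be deleted.
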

\begin{proof} (i) Set $F(X,Y,Z)=(X+Y)^{q+1}Z^{q-1}-2((XY)^{q}+XYZ^{2q-2}),$ $G(X,Y,Z)=(X-Y)^{q+1}Z^{q-1}$, and $V(X,Y,Z)=F(X,Y,Z)-\lambda G(X,Y,Z)$. Then $\cC_\lambda$ has homogeneous equation $V(X,Y,Z)=0$. Moreover,
$V(X,Y,Z)=V(X+\alpha,Y+\alpha,Z)=V(\mu X,\mu Y,Z)=V(Y,X,Z)$, for all $\alpha\in \fq,\mu\in\fq^*$, and $V(YZ,XZ,XY)=(XY)^{q-1}Z^2V(X,Y,Z)$. Thus, $\Lambda$ is fixed by a generator set of $G$.

(ii) For $\lambda=1$, a straightforward computation shows $V(X,Y,Z)=-2(X^q-X)(Y^q-Y)$ whence the claim follows.

(iii) If a  point $P=(x:y:z)$ is a singular point of $\cC_\lambda$, then the partial derivatives $V_X,V_Y,V_Z$ vanish in $(x,y,z)$. A straightforward computation shows that $V_X=V_Y$ implies $(Y-X)Z^{2q-2}=0$. Assume first $z\neq 0$. Then $x=y$. Let $P_\xi=(\xi:\xi:1)$ for $\xi\in \fq$. Then $P_\xi$ is a double point with tangent lines of equation $X=\xi Z$ and $Y=\xi Z$, respectively. Since $\deg(\cC_\lambda)=2q$, B\'ezout's theorem applied to $\cC_\lambda$ and the line $\ell$ of equation $X-Y=0$ implies that the points $P_\xi$ with $\xi\in\fq$ are the only singular points of $\cC_\lambda$ on $\ell$, unless $\ell$ is a component of $\cC_\lambda$. This exception only occurs for $\lambda=\infty$. Assume now $z=0$. For $\lambda\neq \infty$, the points of $\cC_\lambda$ on the infinite line $\ell_\infty$ of equation $Z=0$ $P(x:y:0)$ are two, namely $X_\infty=(1:0:0)$ and $Y_\infty=(0:1:0)$, both of multiplicity $q-1$. For $\lambda \in K\setminus\{1\}$, the infinity line is the unique tangent to $\cC_\lambda$ at $X_\infty$ (as well as at $Y_\infty$). It remains to show that $\cC_\lambda$ is reducible only for $\lambda\in \{1,-1,\infty\}$. Let $\lambda\neq 1,\infty$. Assume that $\cC_\lambda$ has an irreducible component $\cU$ that does not pass through the origin $O=(0:0:1)$. If $\ell_X$ is the line of equation $Y=0$ then $\ell_X\cap \cC_\lambda=\{O,X_\infty\}$, and hence $\ell_X \cap \cU=\{X_\infty\}$. Therefore, $I(X_\infty,\ell_X\cap \cU)=\deg(\cU)$. Since $\ell_X$ is not a tangent to $\cC_\lambda$ at $X_\infty$ the same holds for $\cU$. Thus, $X_\infty$ is a $\deg(\cU)$-fold point of
$\cF$ and hence $\cU$ is the infinity line $\ell_\infty$. But then $\lambda=\infty$. Therefore each irreducible component of $\cC_\lambda$ passes through $O$. Since $O$ is a double point of $\cC$ whereas $\cC$ has at least two irreducible components,  $\cC_\lambda$ turns out to have exactly two irreducible components, say $\cD$ and $\cE$, so that $O$ is a simple point for both with tangent lines $\ell_X$ and $\ell_Y$, respectively. Since $\beta_1$ preserves $\cC_\lambda$ and $\beta_1$ has order $p\neq 2$, both $\cD$ and $\cE$ are $\beta_1$-invariant. The subgroup $M$ of $H$ consisting of all maps $\beta_1$ has order $q$ and acts transitively on the set $D$ of the points $P_\xi(\xi:\xi:1)$ with $\xi\in \fq$. Therefore, $\deg(\cD)\geq q$ and $\deg(\cE)\geq q$ whence $\deg(\cD)=\deg(\cE)=q$ follows. Moreover, $I(O,\ell_X \cap \cD)=q$ as $q+1=I(O,\ell_X\cap \cC)=I(O,\ell_X \cap \cD)+I(O,\ell_X \cap \cD)=I(O,\ell_X \cap \cD)+1$. This shows that $X_\infty\not\in \cD$. The same holds for $\cE$ and the point $Y_\infty$. Therefore, $Y_\infty$ is a ($q-1$)-fold point of $\cD$.  This shows that $\cD$ has affine equation $Y=f(X)=c X^q+\ldots+dx=0$ with $c,d\in \fq$. Since $\cD$ is $M$-invariant, the polynomial $f(X)$ is an $\fq$-linearized polynomial, that is, $f(X)=cX^q+dX$. Actually, $d=0$ as the tangent to $\cD$ at $O$ is the line $Y_\infty$. Since $\beta_4$ takes $\cD$ to $\cE$, an affine equation for $\cE$ is $X+cY^q=0$. It turns out that $\cC_\lambda$ has affine equation $u(Y+cX^q)(X+cY^q)=0$ for some $u\in K$. Since $\cC_\lambda$ is a curve in the pencil $\Lambda$, this only occurs when $\lambda=-1$, $c=-1$, $u=-2$.

(iv) For $\lambda=-1$, the final argument in the proof of (iii) shows that $\cD$ and $\cE$ are the only irreducible components of $\cC_1$.
\end{proof}

\label{noncl}
\section{nonclassicality of $G$-invariant irreducible plane curves}\label{noncl}
Within the scope of the St\"ohr-Voloch theory, a  fundamental question concerns   the nonclassicality or Frobenius nonclassicality of a given curve.
Curves possessing such features are somewhat rare, but their characterization is relevant due to applications in  many   areas, such as coding theory and finite geometry.
The Frobenius nonclassical  property  in some of our  G-invariant curves have  already  been   established in the literature. For instance, it is well know that the Hermitian curve over
 $\mathbb{F}_{q^2}$ is Frobenius nonclassical and that the DGZ curve is an example of a double-Frobenius nonclassical curve.

In this section we present new Frobenius nonclassical curves  by characterizing all such  curves among those  $\mathcal{C} \subseteq \PG(2,K)$ of degree $d<q^3(q^2-1)$
that are  $\PGU(3,q)$-invariant. More precisely, we prove the following.

\begin{theorem}
 Let $\mathcal{C} \subseteq \PG(2,K)$ be the $\mathrm{PGU}(3, q)$-invariant  curve
$$
Y^{q^{3}} Z+Y Z^{q^{3}}-X^{q^{3}+1}-\lambda\left(Y^{q} Z+Y Z^{q}-X^{q+1}\right)^{q^{2}-q+1}=0,\quad  \lambda \in K \backslash\{1\}.
$$
Then $\mathcal{C}$ is  $\mathbb{F}_{q'}$-Frobenius nonclassical  if and only if  either   $q'=q^6$ and   $\lambda=0$, or
$q^{\prime}=q^4$ and  $\lambda^{q+1}=1$.
In the latter case, the curve $\mathcal{C}$  has exactly  $\left(q^{3}+1\right)\left(q^{4}-q^{3}+1\right)$ rational points over $\mathbb{F}_{q^4}$.
\end{theorem}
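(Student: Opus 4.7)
My plan is to use the standard characterization of Frobenius nonclassicality: the plane curve $\cC\colon F=0$ of the statement is $\mathbb{F}_{q'}$-Frobenius nonclassical precisely when $\Psi_{q'}:=F_X X^{q'}+F_Y Y^{q'}+F_Z Z^{q'}$ is divisible by $F$. Writing $F=G-\lambda H^{e}$ with $G=Y^{q^{3}}Z+YZ^{q^{3}}-X^{q^{3}+1}$, $H=Y^{q}Z+YZ^{q}-X^{q+1}$ and $e=q^{2}-q+1\equiv 1\pmod{p}$, direct computation yields
\[
F_X=-X^{q^{3}}+\lambda X^{q}H^{q^{2}-q},\quad F_Y=Z^{q^{3}}-\lambda Z^{q}H^{q^{2}-q},\quad F_Z=Y^{q^{3}}-\lambda Y^{q}H^{q^{2}-q}.
\]

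The central step handles $q'=q^{4}$. The identities $Y^{q^{4}}Z^{q^{3}}+Y^{q^{3}}Z^{q^{4}}-X^{q^{3}+q^{4}}=H^{q^{3}}$ and $Y^{q^{4}}Z^{q}+Y^{q}Z^{q^{4}}-X^{q+q^{4}}=G^{q}$, obtained by raising $H$ and $G$ to suitable Frobenius powers, collapse $\Psi_{q^{4}}$ to $H^{q^{3}}-\lambda H^{q^{2}-q}G^{q}$. Substituting $G=F+\lambda H^{e}$, expanding via $(F+\lambda H^{e})^{q}=F^{q}+\lambda^{q}H^{eq}$, and using $(q^{2}-q)+eq=q^{3}$, I obtain
\[
\Psi_{q^{4}}=(1-\lambda^{q+1})H^{q^{3}}-\lambda H^{q^{2}-q}F^{q}.
\]
By Lemma~\ref{pencil7oct}(ii), $\cC$ is nonsingular for $\lambda\ne 1$, so $F$ is irreducible of degree $q^{3}+1$; since $\deg H=q+1$ and $\cH_q$ is not a component of $\cC$, the polynomials $F$ and $H$ are coprime, and hence $F\mid\Psi_{q^{4}}$ exactly when $\lambda^{q+1}=1$. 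For $\lambda=0$, the curve reduces to the Hermitian $\cH_{q^{3}}$ and the same style of Frobenius identity gives $\Psi_{q^{6}}=F^{q^{3}}$, i.e.\ the classical Frobenius nonclassicality of Hermitian curves.

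For the \emph{only if} direction, the parallel Hermitian expansions eliminate most $q'=q^{k}$ swiftly: for instance $\Psi_{q^{2}}=(1-\lambda)H^{q^{2}}$, not divisible by $F$ when $\lambda\ne 1$; and for $q'=q, q^{3}, q^{5}$, the analogous expansions produce polynomials involving Hermitian-type factors $H_j=Y^{q^{j}}Z+YZ^{q^{j}}-X^{q^{j}+1}$ with $j\notin\{1,3\}$ that are not divisible by $F$. The main obstacle is the residual case $q'=q^{6}$ with $\lambda\ne 0$, where one gets $\Psi_{q^{6}}\equiv \lambda^{q^{3}}H^{eq^{3}}-\lambda H^{q^{2}-q}H_5^{q}\pmod{F}$; I would rule this out either by a direct degree/coefficient verification that the reduction modulo $F$ is nonzero, or by the Hasse--Weil-type observation that Frobenius nonclassicality over $\mathbb{F}_{q^{6}}$ combined with the Hefez--Voloch formula below would force $\cC$ to be $\mathbb{F}_{q^{6}}$-maximal, a property incompatible with $\lambda\ne 0$.

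Finally, for the point count when $\lambda^{q+1}=1$, I would invoke the Hefez--Voloch theorem, which states that a nonsingular $\mathbb{F}_{q'}$-Frobenius nonclassical plane curve of degree $d$ has exactly $d(q'-d+2)$ $\mathbb{F}_{q'}$-rational points. With $d=q^{3}+1$, $q'=q^{4}$, and the Frobenius nonclassicality established above, this immediately yields $N_{q^{4}}(\cC)=(q^{3}+1)(q^{4}-q^{3}+1)$, as claimed.
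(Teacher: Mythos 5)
Your computation for the case $q'=q^4$ is correct and in fact slightly cleaner than the paper's: the identity $\Psi_{q^4}=(1-\lambda^{q+1})H^{q^3}-\lambda H^{q^2-q}F^{q}$, combined with the coprimality of $F$ and $H$, settles both implications of that case simultaneously, whereas the paper obtains the congruence $F^{q^3}(1-\lambda^{q+1})\equiv 0 \pmod V$ only after first pinning down $q'=q^4$. The $\lambda=0$ case and the application of Hefez--Voloch for the point count also match the paper. The genuine gap is in the \emph{only if} direction: you treat $q'$ as ranging over $\{q,q^2,\dots,q^6\}$, but $q'$ is a priori an arbitrary power of $p$ over which the curve is defined, and when $q$ is not prime there are infinitely many candidates $q'=p^m$ that are not powers of $q$; for these your ``analogous expansions'' do not assemble into powers of the Hermitian forms $H_j$, and you give no mechanism for excluding them. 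The paper closes exactly this hole by first invoking \cite[Theorem 8.77]{hirschfeld-korchmaros-torres2008}, which for a Frobenius nonclassical curve of degree $q^3+1$ forces $q^4-q^3+q\leq q'\leq q^6$, and then by a geometric argument (the $q^3+1$ points of $\cH_q(\mathbb{F}_{q^2})$ must lie on the curve $YZ^{q'/q^3}+ZY^{q'/q^3}-X^{q'/q^3+1}=0$) that leaves only $q'=q^4$ and $q'=q^6$. Some bound of this kind is indispensable; without it the case analysis is not a proof. Moreover, even within your list the cases $q'=q^3$ and $q'=q^5$ are asserted rather than proved: showing, e.g., that $\lambda^{q^2}H^{eq^2}-\lambda H^{q^2-q}H_4^{q}$ is not divisible by $F$ requires an argument, not just the observation that an $H_j$ with $j\notin\{1,3\}$ appears.

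The residual case $q'=q^6$ with $\lambda\neq 0$ is also left unresolved: you offer two possible routes but carry out neither. The first (a direct verification that $\lambda^{q^3}H^{eq^3}-\lambda H^{q^2-q}H_5^{q}\not\equiv 0\pmod F$) is nontrivial, since both terms have the same degree $q^6+q^3$ and there is no obvious reduction of $H_5$ modulo $F$. The second is essentially the paper's route, but the final step --- that $\mathbb{F}_{q^6}$-maximality (equivalently, attaining the Hefez--Voloch count $(q^3+1)(q^6-q^3+1)=q^9+1$) is ``incompatible with $\lambda\neq 0$'' --- is precisely the point that needs proof. The paper supplies it via \cite[Corollary 3.2]{bor-hom}: a nonsingular $\mathbb{F}_{q^6}$-Frobenius nonclassical curve of degree $\sqrt{q^6}+1$ must be projectively equivalent to $X^{q^3+1}+Y^{q^3+1}+Z^{q^3+1}=0$, which is excluded because the partial derivatives of $V$ are not $q^3$-th powers of linear forms. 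You would need to import this (or an equivalent uniqueness statement for the maximal curve together with a rigidity argument for its plane models) to complete the proof.
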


\begin{proof}
Let us set  $F(X,Y,Z)=Y^{q}Z+YZ^q-X^{q+1}$,  $G(X,Y,Z)=Y^{q^3}Z+YZ^{q^3}-X^{q^{3}+1}$, and  $V(X,Y,Z)=G(X,Y,Z)- \lambda F(X,Y,Z)^{q^2-q+1}$.
Considering the partial derivatives of $V$,  the polynomials $U_1,U_2$, and $U_3$,  defined as follows
\begin{equation}\label{partial-deriv}
\begin{cases}
V_X=-X^{q^{3}}+\lambda F^{q^{2}-q}X^{q}=U_1^q\\
V_Y=Z^{q^{3}}-\lambda F^{q^{2}-q}Z^{q}=U_2^q\\
V_Z=Y^{q^{3}}-\lambda F ^{q^{2}-q}Y^{q}=U_3^q,
\end{cases}
\end{equation}
are such that  $U_1^qX+U_2^qY+U_3^qZ=V$. Thus,  from \eqref{eqnoc} and \eqref{eqnoc1}  in Section 2, the curve  $\mathcal{C}$  is nonclassical, and it is
$\mathbb{F}_{q'}$-Frobenius nonclassical if and only if
\begin{equation}\label{Frob-condition1}
U_1X^{q'/q}+U_2Y^{q'/q}+U_3Z^{q'/q} \equiv 0  \pmod V.
\end{equation}
Note that if $\lambda=0$, then $\mathcal{C}$ is the curve $Y^{q^{3}} Z+Y Z^{q^{3}}-X^{q^{3}+1}=0$, which is clearly   $\mathbb{F}_{q^6}$-Frobenius nonclassical.
Let us assume  that $\lambda\neq 0$  and $\mathcal{C}$  is $\mathbb{F}_{q'}$-Frobenius nonclassical.  Then \cite[Theorem 8.77]{hirschfeld-korchmaros-torres2008} gives $\sqrt{q^{\prime}}+1 \leq q^3+1 \leq \frac{q^{\prime}-1}{q-1}$, that is,  $q^4-q^3+q\leq q^{\prime}\leq q^6$. It is clear that   $q^{\prime}>q^3$, and we claim that $q^{\prime}<q^6$.  Indeed, since  $\mathcal{C}$ is nonsingular of degree $d=q^3+1$,  if
$q^{\prime}=q^6$, then $d=\sqrt{q'}+1$ and   \cite[Corollary 3.2]{bor-hom}  imply   that $\mathcal{C}$  is projectively equivalent to  the curve $X^{q^3+1}+Y^{q^3+1}+Z^{q^3+1}=0$.  This is  clearly impossible, as the partial derivatives of  $V(X,Y,Z)$ in \eqref{partial-deriv} are not $q^3$ powers of linear forms. Hence
$q^3<q'<q^6$. In addition,  note that \eqref{partial-deriv} also  yields

\begin{equation}\label{Frob-condition2}
U_1X^{q'/q}+U_2Y^{q'/q}+U_3Z^{q'/q} \equiv (YZ^{q'/q^3}+ZY^{q'/q^3}  -X^{q'/q^3+1})^{q^2} \pmod F.
\end{equation}
Thus, since  the $\mathbb{F}_{q^2}$-rational points of $F=Y^{q}Z+YZ^q-X^{q+1}=0$  are points of the curve $\mathcal{C}$, it follows from  \eqref{Frob-condition1} and   \eqref{Frob-condition2} that all such $q^3+1$ points lie on the curve
$$YZ^{q'/q^3}+ZY^{q'/q^3}  -X^{q'/q^3+1}=0,$$
and then $q^3<q'<q^6$ implies   $q'=q^4$. Now using   \eqref{partial-deriv} and   \eqref{Frob-condition1},  a  direct  computation gives $F^{q^3}(1-\lambda^{q+1})  \equiv 0  \pmod V$, and then
$\lambda^{q+1}=1$.  The converse is straightforward. The number of $\mathbb{F}_{q^4}$-rational points of $\mathcal{C}$ follows directly from the Hefez-Voloch formula for nonsingular Frobenius nonclassical curves, see \cite[Theorem 1]{hefez-voloch}.
\end{proof}

\section{On the number of $\mathbb{F}_{q^i}$-rational points of the DGZ curve}
\label{ratpointdgz}
In this Section, we show how knowledge of the (linear) action of a subgroup of $\PGL(3,q)$ preserving a plane curve can be used to count its points in $\PG(2,q^i)$ for $i\ge 1$. For this purpose,
we refer to the DGZ curve $\cF_{3,1}$ whose number $N_i$ of $\mathbb{F}_{q^i}$-rational points has already been computed in \cite{ bor,DGZ} for $i=1,2,3$. More precisely, $N_1=0,N_2= q^4-q$, and $N_3 = q^6-q^5-q^4+q^3$.
Our result is stated in the following theorem.
\begin{theorem}\label{thm:pointsdgz}
$$
N_i =
\begin{cases}
q^4-q,       & i = 4,\\
0,              & i = 5,\\
q^6-q^5+q^3-q, &i =6.
\end{cases}
$$
In particular, the DGZ curve has no $\mathbb{F}_{q^4}$-rational point other than its $\fqq$-rational points, and no $\mathbb{F}_{q^6}$-rational point other than its $\mathbb{F}_{q^3}$ and $\fqq$-rational points.

\end{theorem}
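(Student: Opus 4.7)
The plan is to count the $\mathbb{F}_{q^i}$-rational points of $\cF_{3,1}$ for $i \in \{4, 5, 6\}$ by organizing them according to the length of the $\Phi$-orbit of each point, where $\Phi = \Phi_q$ denotes the $q$-Frobenius. Since the orbit lengths appearing on $\PG(2, q^i)$ are exactly the divisors of $i$, we can write $N_i = \sum_{d \mid i} n_d$, where $n_d$ is the number of points of $\cF_{3,1}$ whose $\Phi$-orbit has length exactly $d$. The values $n_1, n_2, n_3$ are determined by the known $N_1 = 0$, $N_2 = q^4 - q$, $N_3 = q^6 - q^5 - q^4 + q^3$, so it suffices to prove $n_d = 0$ for $d \in \{4, 5, 6\}$.

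I would combine two ingredients. The first is a restriction lemma extracted from Lemma \ref{pencil}(iv): every line $\ell$ of $\PG(2, q)$ meets $\cF_{3,1}$ only at points of $\PG(2, q^2)$. Indeed, $\mathcal{C}_1 = \cF_{3,1}^q \cdot \cF_{3,2}$, and the additivity of intersection multiplicity on products gives
$$ 0 \;=\; I(P, \mathcal{C}_1 \cap \ell) \;=\; q\,I(P, \cF_{3,1} \cap \ell) + I(P, \cF_{3,2} \cap \ell) $$
for every $P \in \ell \setminus \PG(2, q^2)$, whence $I(P, \cF_{3,1} \cap \ell) = 0$. The second ingredient is a collinearity criterion: the numerator of the affine equation of $\cF_{3,1}$ is $N(X, Y) = (X^{q^3}-X)(Y^q-Y) - (Y^{q^3}-Y)(X^q-X)$, which vanishes precisely when the three points $P, \Phi(P), \Phi^3(P)$ are collinear. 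Every point of $\cF_{3,1}$ satisfies $N = 0$, hence such a collinearity.

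For a point $P$ with $\Phi$-orbit of length $d \in \{4, 5, 6\}$ one has $\Phi^3(P) \neq P$, so the three points $P, \Phi(P), \Phi^3(P)$ are distinct and lie on a unique line $m$. Applying $\Phi^j$ to this collinearity produces three collinear orbit-points at positions $\{j, j+1, j+3\} \pmod d$; whenever two of these indices match two of $\{0, 1, 3\}$, the resulting line shares two points with $m$ and therefore coincides with $m$. Iterating for a few values of $j$ (the step $j = 1$ suffices for $d = 4$; $j = 2$ and $j = 4$ for $d = 5$; and $j = 3$ followed by $j = 1, 2$ for $d = 6$) places the entire $\Phi$-orbit on $m$, so $m = \Phi(m)$ is defined over $\mathbb{F}_q$. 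Combining with the first ingredient, $P \in \ell \cap \cF_{3,1} \subseteq \PG(2, q^2)$, which contradicts $d \geq 4$. Hence $n_4 = n_5 = n_6 = 0$, yielding $N_4 = N_2 = q^4 - q$, $N_5 = N_1 = 0$, and $N_6 = N_1 + (N_2 - N_1) + (N_3 - N_1) = q^6 - q^5 + q^3 - q$. The main technical obstacle is the uniform execution of the collinearity-propagation; the case $d = 6$ is the delicate one, where the triple $\{0, 1, 3\} \bmod 6$ overlaps its $\Phi$-translate $\{1, 2, 4\}$ in only a single index, so the argument must first extract $\Phi^3$-invariance of $m$ from the overlap with $\{3, 4, 0\}$ and only then use further $\Phi$- and $\Phi^2$-translates to capture $\Phi^2(P)$ and $\Phi^5(P)$.
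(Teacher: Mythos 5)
Your proof is correct but takes a genuinely different route from the paper's. The paper's argument is group-theoretic and arithmetic: citing \cite{DGZ} for the fact that $\PGL(3,q)$ has exactly two short orbits on $\cF_{3,1}$, coinciding with $\cF_{3,1}(\fqq)$ and $\cF_{3,1}(\mathbb{F}_{q^3})$, it observes that any further $\mathbb{F}_{q^i}$-rational point would bring an entire long orbit of length $|\PGL(3,q)|$ into $N_i$, and this overshoots the Hasse--Weil upper bound $q^i+1+2\mathfrak{g}(\cF_{3,1})q^{i/2}$ for $i=4,5,6$. You instead stratify by exact Frobenius-orbit length and kill $n_4,n_5,n_6$ by purely incidence-theoretic means: the numerator of the defining equation is (up to sign) the collinearity determinant of $P,\Phi(P),\Phi^3(P)$, and your index-chasing correctly forces the line $m$ through these three (distinct, since $d\ge 4$) points to be $\Phi$-invariant for $d=4,5,6$ --- in each case some translate $\{j,j+1,j+3\}$ of $\{0,1,3\}$ meets the accumulated support of $m$ in two residues that are distinct mod $d$ --- after which Lemma \ref{pencil}(iv)--(v) puts $P$ in $\PG(2,q^2)$, a contradiction. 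Both ingredients are legitimately available earlier in the paper, and the bookkeeping $N_4=N_2$, $N_5=N_1$, $N_6=N_2+N_3-N_1$ reproduces the stated values. As for what each approach buys: the paper's is shorter but leans on the external orbit classification from \cite{DGZ} and on Weil's bound, which stops giving information once $q^{i/2}\mathfrak{g}$ outgrows $|\PGL(3,q)|$; yours is elementary, self-contained modulo Lemma \ref{pencil}, exposes the Frobenius-collinearity geometry underlying the DGZ curve, and breaks down for $d=7$ exactly where it should, since $\{0,1,3\}$ is a perfect difference set mod $7$ and any two of its translates share only one index. The only point you should make explicit is that a point with orbit length $\ge 4$ is automatically affine (apply your first ingredient to the line $Z=0$), so that the affine collinearity criterion indeed applies to it.
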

\begin{proof}
Since $|\aut(\cF_{3,1})| = |\PGL(3,q)| > 84(\mathfrak{g}(\cF_{3,1})-1)$, by \cite[Theorem 11.56]{hirschfeld-korchmaros-torres2008}, $\PGL(3,q)$ has at most three short orbits on $\cF_{3,1}$. In 	 \cite{DGZ}, it is actually shown that there are exactly two  $\PGL(3,q)$-short orbits on $\cF_{3,1}$, one tame, which coincides with $\cF_{3,1}(\mathbb{F}_{q^3})$ and one non-tame, which coincides with $\cF_{3,1}(\fqq)$.

We begin with the $\mathbb{F}_{q^4}$-rational points.  Let $P \in \cF_{3,1}$ be an $\mathbb{F}_{q^4}$-rational point of $\cF_{3,1}$ which is not $\fqq$-rational. Then the orbit $\Omega$ of $P$ under $\PGL(3,q)$ is long, that is to say,
\begin{equation}\label{hw1}
|\cF_{3,1}(\mathbb{F}_{q^4})| \geq q^4-q + q^3(q^2+q+1)(q-1)^2(q+1).
\end{equation}

From the Hasse-Weil Bound,

\begin{equation}\label{hw2}
|\cF_{3,1}(\mathbb{F}_{q^4})| \leq q^4+1+2(\ha q(q-1)(q^3-2q-2)+1)q^2.
\end{equation}

By comparing Inequalities \eqref{hw1} and \eqref{hw2}, a computation shows

$$
q^8 - q^6 - q^5 + q^4 + q^3 - q<q^7 - q^6 - 2 q^5 + q^4 + 2 q^3 + 2 q^2 + 1,
$$

which holds true only for $ -1 < q < 1.38552$, a contradiction.

Arguing in a similar way as in the case $i =4$, we see that $\cF_{3,1}(\mathbb{F}_{q^5})$ is a union of $\PGL(3,q)$-long orbits, hence

\begin{equation}\label{hw51}
|\cF_{3,1}(\mathbb{F}_{q^5})| \geq  q^3(q^2+q+1)(q-1)^2(q+1).
\end{equation}

This time, the Hasse-Weil Bound reads

\begin{equation}\label{hw54}
|\cF_{3,1}(\mathbb{F}_{q^5})| \leq q^5+1+2(\ha q(q-1)(q^3-2q-2)+1)q^{5/2}.
\end{equation}

Since the lower bound in  \eqref{hw51} is smaller than the upper bound \eqref{hw54} (assuming $q$ positive) only for $q < 1,53095$, we get a contradiction.

The proof for the case $i =6$ is analogous to the case $i=4$.
\end{proof}
\section{Some more results on the curves in Sections \ref{sec:pgl2} and \ref{hemis}}
\label{BKSc}
\subsection{Genus and $p$-rank of the irreducible plane curve of homogeneous equation (\ref{eq0})} In this section, $\lambda\not\in \{0,1,-1\}$, and $\cY=\cC_\lambda$ stands for the nonsingular plane curve with homogeneous equation (\ref{eq0}), and $\aut(\cY)$ for the automorphism group of $\cY$. Clearly, the genus $\mathfrak{g}(\cC_\lambda)$ is equal to $\ha(q^2-q)$.

To compute the $p$-rank $\gamma$  of $\cC_\lambda$ by means of the Deuring-Shafarevic formula, we first determine the quotient curve $\cC_\lambda/\Sigma$ with $\Sigma$ being defined in Section \ref{sec:pgl2}. Let $F_1$ be the function field of $\cC_\lambda$, that is, $F=F(x,y)$ with $y^{q+1}-(x^{q}+x)- \lambda (y^{2}-2x)^{(q+1)/2}=0$. For $\eta=y^q-y$ and $\xi=y^2-2x$, let $F_0$ be the subfield of $F_1$ generated by $\xi$ and $\eta$. Then
$\deg(F_1|F_0)=q$. Also, $\sigma_a(\eta)=(y+a)^q-(y+a)=y^q-y=\sigma_a(\eta)$ and $\sigma_a(\xi)=(y+a)^2-2(x+ay+\ha a^2)=y^2-2x=\xi$. Thus, $F_0$ is the fixed field of $\Sigma$. Moreover,
$$\eta^2-\xi^q-\xi-2\lambda\, \xi^{(q+1)/2}=(y^q-y)^2-y^{2q}+2x^q-y^2-2x-2\lambda (y^2-2x)^{(q+1)/2}=0.$$
Since $V^2-U^q-U-2\lambda\, U^{(q+1)/2}$ is an irreducible polynomial in $K[U,V]$, it follows that $F_0=F(\xi,\eta)$ with $\eta^2=\xi^q+\xi+2\lambda\, \xi^{(q+1)/2}$.
 As $F_0$ has genus $\bar{\mathfrak{g}}=\ha(q-1)$, the latter equation can also be written as $\eta^2=\xi^{2\bar{\mathfrak{g}}+1}+\tau \xi^{\bar{\mathfrak{g}}+1}\xi+\xi$ where $\tau=2\lambda$.
The Hasse-Witt matrix of hyperelliptic function fields with such a special equation was computed by Miller who also showed its invertibility when $p\nmid \bar{\mathfrak{g}}$ apart from a finitely many values of $\tau$,  see \cite[Proposition 1]{Mi}. Miller's result applies to $F_0$ as in our case $\bar{\mathfrak{g}}=\ha(q-1)$ and $q$ is a power of $p$. Thus,  $F_0$ is an ordinary function field, apart from finitely many $\lambda \in K$.
Discarding these exceptions, the $p$-rank $\bar{\gamma}$ of $F_0$ is equal to $\mathfrak{g}(F_0)=\ha (q-1)$.

Since each non-trivial element of $\Delta$ fixes a unique point of $\cC_\lambda$, namely $X_\infty=(1:0:0)$, the Deuring-Shafarevic formula applied to $\Sigma$ reads $\gamma-1=q(\bar{\gamma}-1)-(q-1)$ whence $\gamma=\ha (q^2-q)=\mathfrak{g}(\cC_\lambda)$. Therefore, the following result is obtained.
\begin{theorem}
\label{th131021B} Apart from a finitely many $\lambda\in K$, the curve $\cC_\lambda$ is an ordinary curve.
\end{theorem}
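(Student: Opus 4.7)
The plan is to compute the $p$-rank $\gamma$ of $\cC_\lambda$ via the Deuring--Shafarevich formula applied to the quotient map $\cC_\lambda\to\cC_\lambda/\Sigma$, where $\Sigma=\{\sigma_a\mid a\in\mathbb{F}_q\}\le\PGL(2,q)$ is the elementary abelian $p$-subgroup of order $q$ introduced in Section~\ref{sec:pgl2}. Since $\mathfrak{g}(\cC_\lambda)=\ha(q^2-q)=q\cdot\ha(q-1)$, proving ordinarity amounts to establishing $\gamma=q\cdot\ha(q-1)$ outside a finite set of parameters.

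The first step is to identify the fixed subfield $F_0=K(\cC_\lambda)^{\Sigma}$. The functions $\xi=y^2-2x$ and $\eta=y^q-y$ are visibly $\Sigma$-invariant (a one-line check from $\sigma_a(y)=y+a$, $\sigma_a(x)=x+ay+\ha a^2$), and substituting the defining equation of $\cC_\lambda$ into $\eta^2=y^{2q}-2y^{q+1}+y^2$ yields
\begin{equation*}
\eta^{2}=\xi^{q}+\xi+2\lambda\,\xi^{(q+1)/2}.
\end{equation*}
Since $y$ satisfies the Artin--Schreier polynomial $T^q-T-\eta$ over $K(\eta)$ and $x=\ha(y^2-\xi)$, one has $[K(\cC_\lambda):K(\xi,\eta)]\le q$; combined with $K(\xi,\eta)\subseteq F_0$ and $[K(\cC_\lambda):F_0]=|\Sigma|=q$, this forces $F_0=K(\xi,\eta)$. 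The quotient curve is therefore hyperelliptic of genus $\bar{\mathfrak{g}}=\ha(q-1)$.

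The core external input, and the step where I expect the real content to lie, is Miller's theorem on the Hasse--Witt matrix of hyperelliptic curves of the shape $\eta^2=\xi^{2\bar{\mathfrak{g}}+1}+\tau\,\xi^{\bar{\mathfrak{g}}+1}+\xi$ with $\tau=2\lambda$: provided $p\nmid\bar{\mathfrak{g}}$, the matrix is nonsingular outside finitely many values of $\tau$. With $\bar{\mathfrak{g}}=\ha(q-1)$ and $q$ a power of $p$, the coprimality is automatic, so for all but finitely many $\lambda$ the quotient has $p$-rank $\bar\gamma=\bar{\mathfrak{g}}=\ha(q-1)$.

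Finally I would record the ramification data of the cover $\cC_\lambda\to\cC_\lambda/\Sigma$. The fixed-point equations for a nontrivial $\sigma_a$ force $a=0$ in the affine plane, while the only place of $\cC_\lambda$ on the line at infinity is $X_\infty=(1:0:0)$, which is fixed by every element of $\Sigma$. Hence the cover is totally ramified above the image of $X_\infty$ and \'etale elsewhere, and the Deuring--Shafarevich formula reduces to
\begin{equation*}
\gamma-1=q(\bar\gamma-1)+(q-1),
\end{equation*}
giving $\gamma=q\bar\gamma=\ha(q^2-q)=\mathfrak{g}(\cC_\lambda)$, as desired. The fixed-field identification and the ramification count are routine; the only genuinely new input is Miller's theorem, which also supplies the finite set of excluded parameters added to the initial exclusions $\{0,1,-1\}$.
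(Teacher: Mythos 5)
Your argument is essentially identical to the paper's proof: the same quotient by $\Sigma$, the same invariants $\xi=y^2-2x$ and $\eta=y^q-y$ identifying the fixed field as the hyperelliptic curve $\eta^2=\xi^q+\xi\pm 2\lambda\,\xi^{(q+1)/2}$, the same appeal to Miller's result on the Hasse--Witt matrix, and the same Deuring--Shafarevich computation at the single totally ramified point $X_\infty$. It is correct (and your version of the Deuring--Shafarevich formula, $\gamma-1=q(\bar\gamma-1)+(q-1)$, has the right sign, which the paper's displayed formula does not, though the paper's final value of $\gamma$ is the correct one).
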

\begin{rem}
\label{rem141021} {\emph{For any power $q$ of $p>2$, Theorem \ref{th131021B} implies the existence of infinitely many ordinary curves $\cX$ with $\aut(\cX)\cong \PGL(2,q)$  such that $|\aut(\cX)|>8 \mathfrak{g}(\cX)^{3/2}$. It should be noted that, for solvable groups  of automorphisms of ordinary curves, the best known bound  on the order of the group is $ 34(\mathfrak{g}-1)^{3/2}$,  see \cite{KM, MS1}. It is still an open problem whether  this bound holds true also for non-solvable groups, apart from a few exceptions.  }}
\end{rem}

\subsection{Genus and the automorphism group of the irreducible plane curve of homogeneous equation (\ref{eq00})}

In this section, $\cX=\cX_\lambda$ stands for a nonsingular model of the irreducible plane curve $\cC_\lambda$ with $\lambda\in K\setminus\{1,-1\}$ as introduced in Section \ref{hemis}.
Let $F=F_\lambda$ be the function field of $\cX_\lambda$, that is, $F=K(x,y)$ with $(x+y)^{q+1}-2((xy)^{q}+xy)-\lambda (x-y)^{q+1}$. Let $\aut(F)=\aut(\cX)$ be the automorphism group of $F$ fixing $K$ element-wise. In terms of $F$, (\ref{formula}) states that the following maps of $F$ are elements of $\aut(\cX)$:
\begin{itemize}
\item[(i)]  $T_\alpha:(x,y)\rightarrow (x+\alpha,y+\alpha),\, \alpha\in \mathbb{F}_q$;
\item[(ii)] $E_\mu: (x,y)\rightarrow (\mu x,\mu y), \, \mu\in \mathbb{F}_q^*$;
\item[(iii)]$L_0: (x,y) \rightarrow (x^{-1},y^{-1})$;
\item[(iv)] $\Phi: (x,y)\rightarrow (y,x)$.
\end{itemize}
Also, the maps $T_\alpha, E_\mu$ together with $L_0$ generate a subgroup $H$ of $\aut(F)$ isomorphic to $\PGL(2,q)$, and $\Phi$ centralizes $H$, so that $L=H\times \langle \Phi\rangle$ is also a subgroup of $\aut(\cX)$.

We begin by showing a close connection between $\cC_\lambda$ and the plane curve of homogeneous equation (\ref{eq0}) with the same $\lambda$. To avoid confusion, the latter curve will be denoted by $\cD_\lambda$.
\begin{proposition}
\label{pro111021} For $\lambda\in \mathbb{K}\setminus \{1,-1\}$, the curve $\cD_\lambda$  is a nonsingular model of the quotient curve $\bar{\cX}=\cX/\langle\Phi\rangle$.
\end{proposition}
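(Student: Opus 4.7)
The plan is to identify $F^{\langle\Phi\rangle}$ inside $F=K(x,y)$ via elementary symmetric functions, and then recognize the resulting plane model as $\cD_\lambda$ after a linear change of affine coordinates.

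First I would set $u=x+y$ and $v=xy$, both obviously fixed by $\Phi:(x,y)\mapsto(y,x)$. Since $x$ and $y$ are the two distinct roots of the separable polynomial $T^2-uT+v\in K(u,v)[T]$, one has $[F:K(u,v)]\le 2$; combined with $|\langle\Phi\rangle|=2$ and the fact that $\Phi$ acts nontrivially on $F$, this forces $F^{\langle\Phi\rangle}=K(u,v)$. Hence the quotient curve $\bar\cX=\cX/\langle\Phi\rangle$ has function field $K(u,v)$.

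Next I would rewrite the defining relation of $\cC_\lambda$ in terms of $u$ and $v$. Since $p$ is odd, $q+1$ is even, and therefore
$$(x-y)^{q+1}=\bigl((x-y)^2\bigr)^{(q+1)/2}=(u^2-4v)^{(q+1)/2}.$$
Together with $(x+y)^{q+1}=u^{q+1}$ and $(xy)^q+xy=v^q+v$, equation (\ref{eq00}) becomes
$$u^{q+1}-2(v^q+v)-\lambda(u^2-4v)^{(q+1)/2}=0.$$
The linear substitution $Y=u$, $X=2v$ (which is indeed a change of affine coordinates over $K$, using that $2^q=2$ in any extension of $\mathbb{F}_p$) then converts this identity into the affine equation $Y^{q+1}-(X^q+X)-\lambda(Y^2-2X)^{(q+1)/2}=0$ of $\cD_\lambda$. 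Consequently $K(u,v)$ is $K$-isomorphic to the function field of $\cD_\lambda$, so $\cD_\lambda$ is a plane model of $\bar\cX$.

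Finally, Lemma \ref{lem111021B}(ii) guarantees that $\cD_\lambda$ is nonsingular for every $\lambda\in K\setminus\{1,-1\}$, which is exactly the range in the statement; thus $\cD_\lambda$ is a \emph{nonsingular} plane model of $\bar\cX$, as required. No genuine obstacle is expected: the only delicate point is the identification $F^{\langle\Phi\rangle}=K(u,v)$, and that is immediate from the minimal polynomial $T^2-uT+v$ of $x$ over $K(u,v)$. The rest is a transparent symmetric-function manipulation followed by an appeal to the already established nonsingularity of $\cD_\lambda$.
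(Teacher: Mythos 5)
Your proof is correct and follows essentially the same route as the paper's: pass to the symmetric functions of $x,y$, identify the fixed field $F^{\langle\Phi\rangle}$ by the degree-$2$ argument, rewrite the defining relation of $\cC_\lambda$, and invoke Lemma \ref{lem111021B}(ii) for nonsingularity. The only difference is cosmetic: the paper sets $v=-2xy$ directly (in fact with a sign slip there; the identity works with $v=2xy$), whereas you take $v=xy$ and rescale by the linear substitution $X=2v$ afterwards, which amounts to the same thing and incidentally avoids that slip.
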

\begin{proof} By Lemma \ref{lem111021B}(ii), $\cD_\lambda$ is a nonsingular plane curve. In the function field $F$, let $u=x+y$ and $v=-2xy$. Then
$$u^{q+1}-(v^q+v)-\lambda(u^2-2v)^{(q+1)/2}=(x+y)^{q+1}-2((xy)^{q}+xy)-\lambda (x-y)^{q+1}=0.$$
Therefore, $F_1=K(u,v)$ is the a function field of the irreducible plane curve of homogeneous equation (\ref{eq0}),  see Sections \ref{sec:pgl2} and \ref{BKSc}. Clearly, $[F:F_1]\le 2$. On the other hand, $\Phi(u)=\Phi(x+y)=\Phi(x)+\Phi(y)=y+x=x+y$ and $\Phi(v)=\Phi(-2xy)=-2\Phi(x)\Phi(y)=-2yx=-2xy$. Hence the fixed field of $\Phi$ contains $F_1$. Since $\Phi$ has order $2$, this yields $[F:F_1]=2$.
\end{proof}
\begin{proposition}
\label{pro111021A} The extension $F|F_1$ is an unramified Kummer extension.
\end{proposition}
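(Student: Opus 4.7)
The plan is to exhibit an explicit Kummer generator of $F$ over $F_1$ and then to verify unramifiedness geometrically by showing that $\Phi$ acts freely on the places of $F$.

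First, I would set $w := x-y$ and observe that $w^2 = (x+y)^2 - 4xy = u^2+2v \in F_1$, while $\Phi(w)=y-x=-w$ forces $w \notin F_1$. Combined with $[F:F_1]=2$ from Proposition \ref{pro111021}, this gives $F=F_1(w)$ with $w^2 \in F_1$. Since the characteristic $p$ is odd (as forced by the factor $2$ in the defining equation of $\cC_\lambda$) and $K$ contains $-1$, this already shows that $F/F_1$ is a Kummer extension of degree $2$ with Galois group $\langle \Phi \rangle$.

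Second, for unramifiedness it suffices to show that $\Phi$ has no fixed place on the smooth model $\cX$, since for a separable degree-two extension in characteristic $\ne 2$ ramification is precisely the presence of a place fixed by the non-trivial Galois element. Every such $\Phi$-fixed place lies above a $\Phi$-fixed point of the plane model $\cC_\lambda$, so I would enumerate these. At infinity, $X_\infty=(1:0:0)$ and $Y_\infty=(0:1:0)$ are interchanged by $\Phi$, so no place above them can be fixed. The affine $\Phi$-fixed points satisfy $x=y$; substituting $y=x$ into the equation of $\cC_\lambda$ yields, after using $2^q=2$ in $K\supset\mathbb{F}_p$, the relation $2(x^q-x)^2=0$, hence $x\in\mathbb{F}_q$. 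Therefore the affine fixed points are precisely the $q$ nodes $P_\xi=(\xi,\xi)$, $\xi\in\mathbb{F}_q$, recognised in Lemma \ref{lem091021}(iii).

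Third, at each node $P_\xi$ the two tangent lines $X=\xi Z$ and $Y=\xi Z$ are distinct, so the two branches of $\cX$ above $P_\xi$ are distinguished by their tangent directions; since $\Phi$ swaps those two lines, it exchanges the two branches. Hence no place of $F$ is $\Phi$-fixed, and $F/F_1$ is unramified.

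The step requiring the most care is this last one: the passage from geometric points of the singular plane model $\cC_\lambda$ to places of the smooth model $\cX$ demands verifying that at each node $\Phi$ genuinely permutes the two branches rather than stabilising each of them. Once the node description of Lemma \ref{lem091021}(iii) is invoked, the verification becomes elementary. As a consistency check, combining $g(F_1)=q(q-1)/2$ from Section \ref{BKSc} with unramifiedness yields, via Riemann--Hurwitz, $g(F)=2g(F_1)-1=q^2-q-1$, which matches the genus of $\cK$ announced after equation \eqref{eq9oct21}.
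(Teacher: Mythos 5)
Your proof is correct and follows essentially the same route as the paper's: both arguments reduce unramifiedness to showing that $\Phi$ fixes no branch of $\cC_\lambda$, observe that $X_\infty$ and $Y_\infty$ are interchanged, and conclude at the affine $\Phi$-fixed points (the nodes $P_\xi$) that $\Phi$ swaps the two branches because it swaps the two tangent lines. Your explicit Kummer generator $w=x-y$ with $w^2=u^2+2v$ and the computation showing the diagonal meets $\cC_\lambda$ only at points with $\xi\in\mathbb{F}_q$ are welcome refinements of details the paper leaves implicit (via Lemma \ref{lem091021}(iii)), but they do not change the substance of the argument.
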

\begin{proof} $F|F_1$ is a Kummer extension as $\deg(F|F_1)=[F:F_1]=2$ and $p\neq 2$. Assume on the contrary that $F|F_1$ ramifies at a point $\bar{P}$ of $\cD_\lambda$, and denote by $\gamma$ the branch of $\cC_\lambda$ lying over $\bar{P}$. Let $P=(\xi:\eta:\zeta)$ be the center of $\gamma$. Assume first $\zeta \neq 0$. Then $\zeta=1$ can be assumed. Since $\Phi$ fixes $\gamma$ and $\Phi$ is linear, the point $P$ is fixed by $\Phi$ and hence $P=(\xi:\xi:1)$. Actually, there is another branch $\delta$ of $\cC_\lambda$ centered at $P$, and the tangents to $\gamma$ and $\delta$ are the lines of equations $X=\xi Z$ and $Y=\xi Z$, respectively. These two lines are interchanged by $\Phi$ which yields that $\gamma$ and $\delta$ are also interchanged by $\Phi$. In particular, $\Phi$, does not fix $\gamma$, a contradiction. If $\zeta=0$ then $P$ is either $X_\infty=(1:0:0)$ or $Y_\infty=(0:1:0)$. As $\Phi$ interchanges $X_\infty$ with $Y_\infty$, $\Phi$ cannot fix a branch centered at $X_\infty$, or $Y_\infty$.
\end{proof}
Since $\mathfrak{g}(F_1)=\ha (q^2-q)$, the Hurwitz genus formula applied to $\Phi$ gives the following result.
\begin{proposition}
\label{genusBKSc} The genus of $\cX$ is equal to $q^2-q-1$.
\end{proposition}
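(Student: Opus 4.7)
The plan is to apply the Hurwitz genus formula to the degree-$2$ cover $F|F_1$ produced by Proposition \ref{pro111021}, using that this cover is unramified by Proposition \ref{pro111021A}. The only ingredient that is not already in place is the genus of the lower field $F_1$.

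First, I would record that $\mathfrak{g}(F_1)=\tfrac{1}{2}q(q-1)$. By Proposition \ref{pro111021}, a nonsingular model of the quotient curve $\bar{\cX}=\cX/\langle\Phi\rangle$ is the plane curve $\cD_\lambda$ with homogeneous equation \eqref{eq0}. By Lemma \ref{lem111021B}(ii) this curve is nonsingular, and its degree is $q+1$, so the standard genus-degree formula for a smooth plane curve gives
\[
\mathfrak{g}(F_1)=\mathfrak{g}(\cD_\lambda)=\frac{(q+1)q}{2}-q=\frac{q(q-1)}{2}.
\]
(This matches the $p$-rank computation already carried out in the previous subsection, where $\mathfrak{g}(\cC_\lambda)=\tfrac12(q^2-q)$ was used.)

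Next I would apply the Hurwitz genus formula to the separable degree-$2$ extension $F|F_1$. By Proposition \ref{pro111021A}, this extension is unramified, hence the different divisor has degree $0$, and the Hurwitz formula reads
\[
2\mathfrak{g}(F)-2=2\bigl(2\mathfrak{g}(F_1)-2\bigr).
\]
Substituting $\mathfrak{g}(F_1)=\tfrac12 q(q-1)$ gives
\[
2\mathfrak{g}(F)-2=2(q^2-q-2),
\]
so $\mathfrak{g}(F)=q^2-q-1$, which is the claimed value since $F$ is the function field of $\cX$.

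There is no real obstacle: the two facts that do the work (the smoothness and degree of $\cD_\lambda$, and the unramifiedness of $F|F_1$) are already established, so the proof is a one-line application of Hurwitz. The only thing that would have been delicate—ruling out ramification of the double cover at the singular points of $\cC_\lambda$ and at the points over the infinite line—has been taken care of in Proposition \ref{pro111021A} via the observation that $\Phi$ permutes the two branches at each self-symmetric singular point rather than fixing them.
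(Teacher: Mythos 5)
Your proof is correct and follows exactly the paper's route: the paper likewise computes $\mathfrak{g}(F_1)=\tfrac12(q^2-q)$ from the smooth degree-$(q+1)$ plane model $\cD_\lambda$ and then applies the Hurwitz genus formula to the unramified degree-$2$ extension $F|F_1$ established in Proposition \ref{pro111021A}. No gaps; the arithmetic $2\mathfrak{g}(F)-2=2(q^2-q-2)$ giving $\mathfrak{g}(F)=q^2-q-1$ is exactly what the paper does.
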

Furthermore, Proposition  \ref{pro111021A} implies that each $T_\alpha$ has exactly two fixed points on $\cX$, namely those lying over $X_\infty$ in the cover $\cX \mapsto \cX/\langle \Phi \rangle$.

To prove that $\aut(\cX)=H\times \langle \Phi \rangle$, a careful analysis of the action of $H$ on $\cX$ is needed. For this purpose, let $P_1,P_2\in \cX$ be the points arising from the branches of $\cC_\lambda$ centered at the points $X_\infty$ and $Y_\infty$, respectively. Also, let $N_1,N_2\in\cX$ be the points arising from the branches of $\cC_\lambda$ centered at $O=(0:0:1)$.

The group $T=\{T_\alpha|\alpha\in \mathbb{F}_q\}$ is a Sylow $p$-subgroup of $H$. Each nontrivial element in $T$ has exactly two fixed points, namely $P_1$ and $P_2$.  Furthermore, $E=\{E_\mu| \mu\in \mathbb{F}_q^*\}$ is a (cyclic) subgroup of $\aut(\cX)$ of order $q-1$ whose fixed points are $P_1,P_2,N_1,N_2$. The group generated by $E$ together with $L_0$ is a dihedral group of order $2(q-1)$ and $L_0$ interchanges $P_1$ with one of the points $N_1,N_2$, say $N_1$, and it interchanges $P_2$ with $N_2$. The group generated by $L_0$ and $\Phi$ is an elementary abelian group of order $4$ and acts on $\{P_1,P_2,N_1,N_2\}$ as a sharply transitive permutation group.

Here, the normalizer $N_H(T)$ of $T$ in $\aut(\cX)$ is $T\rtimes E$; also the normalizer $N_L(T)$ of $T$ in $\Delta$ is $T\rtimes U$ with the (non-cyclic, abelian) subgroup $U$ of order $2(q-1)$ generated by $E$ and $\Phi$.

 If $Q_1$ and $Q_2$ are $p$-subgroups of $\aut(\cX)$ fixing at least one of the points $P_1,P_2$, then $Q_1$ and $Q_2$ generate a $p$-subgroup of $\aut(\cX)$. Therefore, there is a unique $p$-subgroup $Q\leq \aut(\cX)$ of maximal order which fixes both $P_1$ and $P_2$.

 \begin{lem}
 \label{lem29nov2018} The Sylow $p$-subgroups of $\aut(\cX)$ are conjugate to the Sylow $p$-subgroups of $H$.
 \end{lem}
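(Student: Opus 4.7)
My plan is to reduce the lemma to showing that $T$ is itself a Sylow $p$-subgroup of $\aut(\cX)$. Once this is established, Sylow's theorem immediately gives that all Sylow $p$-subgroups of $\aut(\cX)$ are conjugate, and since $T$ is also a Sylow $p$-subgroup of $H$ the conclusion follows. I would argue by contradiction: assume some Sylow $p$-subgroup $S$ of $\aut(\cX)$ properly contains $T$. Standard $p$-group theory then gives $T \subsetneq N_S(T)$. Any $\sigma \in N_S(T)$ permutes the fixed-point set $\{P_1,P_2\}$ of $T$; since $\sigma$ has odd order (a power of $p>2$), it fixes both $P_1$ and $P_2$ individually. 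Hence $N_S(T) \subseteq Q$, where $Q$ denotes the unique maximal $p$-subgroup of $\aut(\cX)$ fixing $P_1$ and $P_2$ introduced just before the lemma. In particular $Q \supsetneq T$, and it suffices to rule this out.

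The next step is a Deuring--Shafarevich computation. From Proposition \ref{pro111021A} the double cover $\cX \to \bar\cX = \cD_\lambda$ is unramified, and Theorem \ref{th131021B} ensures that $\cD_\lambda$ is ordinary for all but finitely many $\lambda$; hence $\cX$ is ordinary of $p$-rank $\gamma_\cX = q^2-q-1$. By Nakajima's theorem for ordinary curves, $Q$ is elementary abelian and all second ramification groups in the $Q$-cover are trivial. Moreover, since $T \subseteq Q$ and $T$ fixes only $P_1,P_2$, the fixed-point set of $Q$ is $\{P_1,P_2\}$. Applying Deuring--Shafarevich to $\cX \to \cX/Q$ and separating the contribution $2(|Q|-1)$ coming from $P_1,P_2$, one obtains a congruence
\begin{equation*}
q(q-1) \;\equiv\; -\sum_O \frac{|Q|}{p^{b_O}} \pmod{|Q|},
\end{equation*}
where the sum runs over the additional $Q$-orbits with inertia of order $p^{b_O}<|Q|$.

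The hard part is to show that no such extra orbit can exist, so that the sum above is empty and the congruence reduces to $|Q|\mid q(q-1)$; as $|Q|$ is a power of $p$ coprime to $q-1$, this then forces $|Q|=q$ and hence $Q=T$, giving the desired contradiction. The strategy is to observe that any nontrivial subgroup $I \subseteq Q$ fixing a point $R \notin \{P_1,P_2\}$ must satisfy $I \cap T = \{1\}$ (since $T$ fixes no further points), so $I$ is a complement of $T$ inside the elementary abelian group $Q$. Applying Riemann--Hurwitz to the cover $\cX \to \cX/I$, together with Nakajima's vanishing $G_2=1$ and the fact that $I$ fixes at least the three distinct points $P_1,P_2,R$, yields numerical constraints that are incompatible with $|I|\ge p$, forcing $I=1$ and thus $Q=T$.
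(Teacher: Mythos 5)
Your opening reduction is fine, and in fact cleaner than the paper's: the paper splits into cases according to whether $Z(S)\cap T$ is trivial, whereas your observation that $T\subsetneq N_S(T)$ and that $N_S(T)$ permutes (hence, having odd order, fixes pointwise) the set $\{P_1,P_2\}$ gets you $Q\supsetneq T$ in one stroke. The problem is the engine you then use to derive a contradiction.

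The gap is your claim that $\cX$ is ordinary. An unramified degree-$2$ cover of an ordinary curve need not be ordinary when $p$ is odd: the Deuring--Shafarevich formula transfers $p$-ranks only along $p$-covers, and here the $p$-rank of $\cX$ is the $p$-rank of $\cD_\lambda$ plus the $p$-rank of the Prym part, which is not controlled by Theorem \ref{th131021B}. Indeed the paper itself, in the remark following Proposition \ref{pro251021}, states explicitly that it is an \emph{open problem} whether $\cX_\lambda$ is ordinary (Miller's invertibility criterion applies to $\cD_\lambda$ but not to $\cX/T$). Without ordinariness you cannot invoke Nakajima's theorem, so you have no vanishing of second ramification groups, no elementary abelianity of $Q$, and no value $q^2-q-1$ for $\gamma_\cX$ to feed into Deuring--Shafarevich; the entire divisibility argument $|Q|\mid q(q-1)$ collapses. (Even granting ordinariness, the step ruling out extra ramified orbits is only a sketch, and "$I\cap T=\{1\}$ implies $I$ is a complement of $T$ in $Q$" is not correct as stated.) The paper instead avoids any $p$-rank input: it shows $E$ and $\Phi$ normalize $Q$, so the quotient $\cX/Q$ carries an abelian non-cyclic group of order $2(q-1)$ fixing a point, forcing $\mathfrak{g}(\cX/Q)\ge 1$ and $4\mathfrak{g}(\cX/Q)+2\ge 2(q-1)$; combining this with the Riemann--Hurwitz lower bound $\mathfrak{g}(\cX)-1\ge |Q|(\mathfrak{g}(\cX/Q)-1)+2(|Q|-1)$ yields $\mathfrak{g}(\cX/Q)<1$, a contradiction. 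You would need to replace your Deuring--Shafarevich step by an argument of this kind (or independently establish ordinariness of $\cX$, which the authors could not do).
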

 \begin{proof} There is a Sylow $p$-subgroup $S$ of $\aut(\cX)$ which contains $T$. Our aim is to prove that $T=S$. By way a contradiction, assume that $|S|=p^t|T|=p^tq$ with $t\geq 1$.  We show that some subgroup $Q$ of $S$ properly containing $T$ still fixes both points $P_1,P_2$, and that its normalizer $N(Q)$ in $\aut(\cX)$ contains $E$ and $\Phi$.
 For this purpose, two cases are treated separately according as the center $Z(S)$ of $S$ contains some nontrivial elements of $T$ or it does not.

 In the former case, let $z\in Z(S)\cap T$ be a nontrivial element. If $s\in S$ then $zs=sz$ yields $z(s(P_1))=s(z(P_1))=s(P_1)$ which shows that $s(P_1)$ is fixed by $z$. Similarly, $s(P_2)$ is fixed by $z$. Therefore, either $s(P_1)=P_1,\, s(P_2)=P_2$, or $s(P_1)=P_2,\,s(P_2)=P_1$, that is, $s$ interchanges $P_1$ with $P_2$. Since $s$ has odd order, this cannot occur, and hence $s(P_1)=P_1$ and $s(P_2)=P_2$ for every $s\in S$. Therefore, $S$ itself is an admissible choice for $Q$, and we put $Q=S$ for this case. Since $E_\mu$ fixes both $P_1$ and $P_2$, $E_\mu^{-1}SQE_\mu$ is a Sylow $p$-subgroup which fixes $P_1$. Thus $E\mu^{-1}QE=Q$, that is, $E$ is contained in  $N(Q)$. Similarly, since $L_0$ interchanges $P_1$ and $P_2$, we have $\Phi^{-1} Q\Phi$ is a Sylow $p$-subgroup fixing $P_1$ whence $\Phi\in N(Q)$.

If $Z(S)\cap T=\{id\}$ then the subgroup generated by $T$ and $Z(S)$ is their direct product, so that $T\times Z(S)$ is an abelian group. We show that $T\times Z(S)$ is an admissible choice for $Q$. In fact, since $Z(T\times Z(S))=T\times Z(S)\gneqq  T$, the above argument still works when $S$ is replaced by $T\times Z(S)$ showing that $T\times Z(S)$ fixes $P_1$ and $P_2$. Now, define $Q$ to be a $p$-subgroup of $\aut(\cX)$ whose order is maximal with respect to the following two properties:
\begin{itemize}
\item[(i)] $Q$ fixes both $P_1$ and $P_2$.
\item[(ii)] $Q$ contains $T$.
\end{itemize}
Since $T\times Z(S)$ satisfy both (i) and (ii), we have that $|Q|>|T|$. Furthermore, $Q$ is uniquely determined. In fact, since all elements in $\aut(\cX)_{P_1}$ whose order is  a power of $p$ are contained in a unique $p$-subgroup, the $p$-subgroups of $\aut(\cX)$ satisfying (i) and (ii) generate a subgroup that still satisfies (i) and (ii). Let $|Q|=p^rq$ with $r\geq 1$. Furthermore, since $Q$ is uniquely determined, and $E_\mu$ fixes both $P_1$ and $P_2$, we have $E\le N(Q)$. Since $\Phi$ interchanges $P_1$ and $P_2$, we also have $\Phi \in N(Q)$.

To finish the proof, we use an idea from the proof of \cite[Lemma 4.1]{GK2018}. Let $\bar{\cX}=\cX/Q$ be the quotient curve of $\cX$ by $Q$, and denote by $\bar{F}$ its function field. Since $E,L_0\le N(Q)$,  $\aut(\bar{\cX})$ has a subgroup $\bar{U}$ of order $2(q-1)$ isomorphic to $E\times \langle L_0 \rangle$. Clearly, $\bar{U}$ fixes the point of $\bar{\cX}$ lying under $P_1$. Moreover, $\bar{\cX}$ is not rational, as $\bar{U}$ is abelian but non-cyclic. Therefore, $\mathfrak{g}(\bar{\cX})\geq 1$.  Then  $4\mathfrak{g}(\bar{\cX})+2\geq |U|$,  see \cite[Theorem 11.60]{hirschfeld-korchmaros-torres2008}.  Furthermore,
the Riemann-Hurwitz genus formula applied to $Q$ gives $\mathfrak{g}-1\geq |Q|(\mathfrak{g}(\bar{\cX})-1)+2(|Q|-1)$. Therefore,
$$
\begin{array}{lll}
(4\mathfrak{g}(\bar{\cX})+2)q\geq |\bar{U}||Q|=2p^rq(q-1)=2p^r(q^2-q-2)+4p^r=2p^r(\mathfrak{g}-1)+4p^r\geq \\
2p^r(p^rq(\mathfrak{g}(\bar{\cX})-1)+2(p^rq-1))+4p^r>2p^{2r}q\mathfrak{g}(\bar{\cX})-2p^{2r}q-4p^r+2(p^rq-1))+4p^r,
\end{array}
$$
whence $$\mathfrak{g}(\bar{\cX})\leq \frac{(p^{2r}-2)q-(p^rq-1)-2p^r+6q}{(p^{2r}-2)q}<1,$$
a contradiction. \end{proof}
As a corollary, we have the following claim.
\begin{lemma}
\label{lem29nov2018A} Every Sylow $p$-subgroup of $\aut(\cX)$ fixes exactly two points on $\cX$ and they are interchanged by $\Phi$.
\end{lemma}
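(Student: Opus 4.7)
The plan is to derive this lemma as a quick consequence of the already-established Lemma \ref{lem29nov2018} together with the geometric descriptions of $T$ and $\Phi$ that were recorded just before that lemma. Let $S$ be an arbitrary Sylow $p$-subgroup of $\aut(\cX)$. By Lemma \ref{lem29nov2018}, $S$ is $\aut(\cX)$-conjugate to $T$, so I can write $S=gTg^{-1}$ for some $g\in\aut(\cX)$.

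First I would identify the fixed point set of $T$ on $\cX$. This is immediate from the text: each nontrivial element $T_\alpha$ has exactly the two fixed points $P_1,P_2$ (the two branches of $\cC_\lambda$ centered at $X_\infty$ and $Y_\infty$), so the common fixed point set of $T$ is $\{P_1,P_2\}$. Transporting by the conjugation $g$, the fixed point set of $S$ is $\{g(P_1),g(P_2)\}$, which has exactly two elements; this gives the first assertion.

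For the second assertion, I would recall that $\Phi$ is the projectivity $(X{:}Y{:}Z)\mapsto(Y{:}X{:}Z)$, which swaps the points $X_\infty=(1{:}0{:}0)$ and $Y_\infty=(0{:}1{:}0)$ and hence swaps the branches $P_1$ and $P_2$ lying over them. Conjugating by $g$, the involution $g\Phi g^{-1}\in\aut(\cX)$ interchanges $g(P_1)$ and $g(P_2)$, i.e.\ the two fixed points of $S$. When $S=T$ we may take $g=1$ and the interchanging involution is $\Phi$ itself; in general, it is the appropriate conjugate of $\Phi$ inside $\aut(\cX)$.

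There is essentially no obstacle here beyond Lemma \ref{lem29nov2018}, which has already been established; once Sylow conjugacy is in hand the rest is bookkeeping. The only thing worth flagging is the interpretation of ``interchanged by $\Phi$'' in the statement: at this stage of the argument we do not yet know that $\aut(\cX)=H\times\langle\Phi\rangle$, so the swap has to be understood as being effected by an element of the $\aut(\cX)$-conjugacy class of $\Phi$ (reducing to $\Phi$ itself for the distinguished Sylow subgroup $T$). This is exactly what is needed for the subsequent counting arguments aimed at pinning down $\aut(\cX)$.
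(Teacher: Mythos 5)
Your argument is correct and is essentially the one the paper intends: the paper states this lemma as an immediate corollary of Lemma \ref{lem29nov2018} without writing out a proof, and your conjugation argument (transporting the fixed-point set $\{P_1,P_2\}$ of $T$ and the swapping involution to $gTg^{-1}$) is exactly that corollary made explicit. Your closing remark that, for a Sylow subgroup other than $T$, the interchange is a priori effected by a conjugate of $\Phi$ rather than by $\Phi$ itself is a fair and correct reading of the statement, since centrality of $\Phi$ in $\aut(\cX)$ is only established later.
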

\begin{lem}
 \label{lem10jun2019} $\aut(\cX)$ has a unique non-tame orbit.
 \end{lem}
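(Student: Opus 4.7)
The plan is to describe the non-tame orbits of $\aut(\cX)$ in terms of the fixed points of non-trivial $p$-elements, and then to combine the two preceding lemmas to show that all such fixed points lie in a single $\aut(\cX)$-orbit.

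First, I would translate the question into the language of fixed points: a point $Q\in \cX$ lies in a non-tame orbit if and only if its stabilizer $\aut(\cX)_Q$ contains a non-trivial $p$-element, or equivalently if and only if $Q$ is fixed by some non-trivial $p$-element $g\in \aut(\cX)$. Thus the union of the non-tame orbits coincides with the set $\mathcal{W}$ of points of $\cX$ fixed by at least one non-trivial $p$-element.

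Next, I would combine Lemma \ref{lem29nov2018} with the explicit fact, recorded just before that lemma, that every non-trivial element of $T$ fixes exactly the two points $P_1$ and $P_2$. Since every non-trivial $p$-element of $\aut(\cX)$ lies in some Sylow $p$-subgroup, and every Sylow $p$-subgroup of $\aut(\cX)$ has the form $hTh^{-1}$ with $h\in \aut(\cX)$, the fixed locus of any non-trivial $p$-element equals $\{h(P_1),h(P_2)\}$ for a suitable $h$. This already shows $\mathcal{W}\subseteq \aut(\cX)\cdot P_1\cup \aut(\cX)\cdot P_2$, and the reverse inclusion is immediate because $P_1$ is fixed by $T$.

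Finally, by Lemma \ref{lem29nov2018A} the element $\Phi\in \aut(\cX)$ interchanges $P_1$ and $P_2$, so these two points belong to a common $\aut(\cX)$-orbit $\cO$, whence $\mathcal{W}=\cO$. Since $\cO$ is non-tame (as $T$ fixes $P_1$), it is then the unique non-tame orbit of $\aut(\cX)$. The heavy lifting has already been done in Lemmas \ref{lem29nov2018} and \ref{lem29nov2018A}, so the main obstacle is not in this argument itself but in the Sylow conjugacy supplied by Lemma \ref{lem29nov2018}: without it, additional Sylow $p$-subgroups could a priori produce non-tame orbits disjoint from $\cO$.
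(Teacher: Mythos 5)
Your proposal is correct and follows essentially the same route as the paper: the paper also takes a point $P$ in a non-tame orbit, observes that its first ramification group (a $p$-group) is conjugate into $T$ by Lemma \ref{lem29nov2018}, concludes that $P$ is in the orbit of $P_1$ or $P_2$, and merges these via $\Phi$. Your reformulation in terms of the fixed locus of non-trivial $p$-elements is just a mild repackaging of the same argument.
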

 \begin{proof} Take a point $P$ lying in a non-tame orbit of $\aut(\cX)$, and let $S$ be the first ramification group of $\aut(\cX)$ at $P$. Since $S$ is contained in Sylow $p$-subgroup of $\aut(\cX)$, Lemma \ref{lem29nov2018} shows that the conjugate of $S$ by some element $u\in\aut(\cX)$ is contained in the Sylow $p$-subgroup $T$ of $\aut(\cX)$. Therefore, the image of $P$ by $u$ is a fixed point of $T$, and hence it is either $P_1$ or $P_2$. In particular, $P$ and $P_1$ (or $P_2$) are in the same orbit of $\cX$. Since $\Phi$ interchanges $P_1$ and $P_2$, the latter two points are in the same orbit. It turns out that $P$ is in the orbit of $P_1$ whence the assertion follows.
 \end{proof}
 \begin{lem} \label{ramif}
 The second ramification group of $\aut(\cX)_{P_1}$ at $P_1=(1:0:0)$ is trivial.
\end{lem}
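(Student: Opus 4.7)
The plan is to identify the first ramification group at $P_1$ with the translation subgroup $T=\{T_\alpha:\alpha\in\mathbb{F}_q\}$, exhibit an explicit local uniformizer $\pi$ at $P_1$, and verify that $v_{P_1}(T_\alpha(\pi)-\pi)=2$ for every non-trivial $T_\alpha$; this forces the second ramification group to be trivial. By Lemma \ref{lem29nov2018}, every Sylow $p$-subgroup of $\aut(\cX)$ has order $q$, and $T$ is such a subgroup fixing $P_1$. Since the first ramification group at $P_1$ is a $p$-group containing $T$, it must coincide with $T$, so the second ramification group is contained in $T$ and it suffices to test the condition on the generators $T_\alpha$.

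To locate a uniformizer I pass to the affine chart $X=1$ with coordinates $\bar Y=Y/X$, $\bar Z=Z/X$, where the local equation of $\cC_\lambda$ at $X_\infty=(1:0:0)$ reads
\[
(1+\bar Y)^{q+1}\bar Z^{q-1} \;=\; 2\bar Y\bar Z^{2q-2}+2\bar Y^{q}.
\]
By Lemma \ref{lem091021}(iii), $X_\infty$ is a $(q-1)$-fold point with unique tangent $\bar Z=0$ and tangential intersection multiplicity equal to $q$. A short Newton-polygon analysis yields a single branch at $X_\infty$, with Puiseux expansions $\bar Y=t^{q-1}(1+O(t))$ and $\bar Z=c\,t^{q}(1+O(t))$ where $c^{q-1}=2$; the $q-1$ choices of $c$ are permuted by the reparametrizations $t\mapsto\zeta t$ with $\zeta^{q-1}=1$, so they all define the same place $P_1$. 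Hence $v_{P_1}(\bar Y)=q-1$ and $v_{P_1}(\bar Z)=q$, and therefore $v_{P_1}(y)=v_{P_1}(\bar Y/\bar Z)=-1$, which shows that $\pi:=1/y$ is a local uniformizer at $P_1$.

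The rest is a direct computation: for every $\alpha\in\mathbb{F}_q^{*}$,
\[
T_\alpha(\pi)-\pi \;=\; \frac{1}{y+\alpha}-\frac{1}{y} \;=\; \frac{-\alpha}{y(y+\alpha)}.
\]
Since $\alpha\in K^{*}$ and $y$ has a simple pole at $P_1$, there is no cancellation in $y+\alpha$, so $v_{P_1}(y+\alpha)=v_{P_1}(y)=-1$, and
\[
v_{P_1}\bigl(T_\alpha(\pi)-\pi\bigr) \;=\; 0-(-1)-(-1) \;=\; 2.
\]
Thus every non-trivial $T_\alpha$ lies in the first but not the second ramification group, so the second ramification group is trivial.

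The main technical worry is the claim that a single place of $\cX$ lies above $X_\infty$: the Puiseux computation naively produces $q-1$ candidate leading coefficients for $\bar Z$, and one has to notice that the $t\mapsto\zeta t$ action on parametrizations identifies them. A cleaner alternative is to invoke Proposition \ref{pro111021A}, which asserts that the cover $F\mid F_1$ is unramified; one may then transfer the whole calculation to the smooth curve $\cD_\lambda$, where $X_\infty$ is already a smooth point admitting $\bar Y$ as a uniformizer, thereby bypassing the Newton-polygon bookkeeping.
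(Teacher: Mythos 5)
Your proof is correct and follows essentially the same route as the paper's: both identify the first ramification group at $P_1$ with $T$ via Lemma \ref{lem29nov2018}, establish $v_{P_1}(y)=-1$ so that $1/y$ is a local parameter (the paper reads this off from the intersection numbers $I(X_\infty,\cC_\lambda\cap\ell_\infty)=q$ and $I(X_\infty,\cC_\lambda\cap\ell_1)=q-1$ of Lemma \ref{lem091021}(iii) rather than from a Newton polygon), and then perform the identical computation $v_{P_1}(T_\alpha(1/y)-1/y)=2$. The only blemish is that your local equation at $X_\infty$ omits the term $-\lambda(1-\bar Y)^{q+1}\bar Z^{q-1}$; this changes the leading coefficient (one gets $c^{q-1}=2/(1-\lambda)$ rather than $c^{q-1}=2$) but, since $\lambda\neq 1$, not the valuations $v_{P_1}(\bar Y)=q-1$ and $v_{P_1}(\bar Z)=q$, so the conclusion is unaffected.
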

 \begin{proof} According to Lemma \ref{lem091021}(iii), $X_\infty$ is a ($q-1$)-fold singular point whose (unique) tangent is the line $\ell_\infty$ at infinity $Z=0$, and $I(P_1,C\cap\ell_\infty)=q$ whereas $I(P_1,C\cap\ell_1)=q-1$ for the line $\ell_1$ of equation $Y=0$. Therefore, $I(X_\infty,\cC_\lambda\cap\ell_\infty)-I(X_\infty,\cC_\lambda \cap\ell_1)=1$. This shows that $y^{-1}$ is a local parameter of $F$ at the point $P_1$.

From Lemma \ref{lem29nov2018}, $\Gamma_{P_1}^{(1)} = T$. For $T_\alpha \in T$ with $\alpha\in \mathbb{F}_q^*$,
$$
\begin{array}{llll}
v_{P_{1}}(T_\alpha(y^{-1}) -y^{-1}) = v_{P_{1}}(({y+\alpha})^{-1}-y^{-1}) = v_{P_{1}}(\alpha y^{-1})+v_{P_{1}}(\alpha(y+\alpha)^{-1})=\\
v_{P_{1}}(ay^{-1})+v_{P_{1}}(ay^{-1}(1+ay^{-1})^{-1})=2v_{P_{1}}(ay^{-1})+v_{P_{1}}((1+ay^{-1})^{-1})=\\
2+v_{P_{1}}(1-ay^{-1}+(ay{-1})^2+\ldots)=2+0=2,
\end{array}
$$
whence the claim follows. \end{proof}
 \begin{lem}
 \label{lemA10jun2019} Let $P$ be a point in the unique non-tame orbit of $\aut(\cX)$. Then the stabilizer of $P$ in $\aut(\cX)$ has order $q(q-1)$ and it is the semidirect product of a Sylow $p$-subgroup by a cyclic group of order $q-1$. In particular, if $P$ is a fixed point of $T$ then the stabilizer of $P$ in $\aut(\cX)$ is contained in $H\times \langle \Phi \rangle$, and the centralizer of $T$ in $\aut(\cX)$ has order $2$.
 \end{lem}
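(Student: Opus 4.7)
The plan is to pin down $G := \aut(\cX)_{P_1}$ by analyzing its action on a local uniformizer $t$ at $P_1$, exploiting that $T$ is the first ramification group $G^{(1)}$ (via Lemma \ref{lem29nov2018}) and that $G^{(2)} = 1$ (Lemma \ref{ramif}). The quotient $G/T$ embeds into $K^{*}$ via the character $\chi(c) = $ leading coefficient of $c^{*}(t)$, hence it is cyclic of order prime to $p$, and Schur--Zassenhaus gives $G = T \rtimes C$ with $C$ cyclic of order $m$. Since $E$ consists of $p'$-elements fixing $P_1$, one may arrange, after a conjugation inside $G$, that $E \le C$, so $(q-1) \mid m$. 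The core of the argument is to prove the reverse bound $m \le q-1$, which will force $C = E$, give $|G| = q(q-1)$, and, since the unique subgroup of order $q(q-1)$ fixing $P_1$ in $H \cong \PGL(2,q)$ is the Borel $T \rtimes E$, also yield $G \subseteq H \subseteq H \times \langle \Phi \rangle$.

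The bound $m \le q-1$ will be obtained by realizing the conjugation action of $C$ on $T$ as scalar multiplication by an injective character valued in a subfield of $K$. Since $G^{(2)} = 1$, every nontrivial $\tau \in T$ satisfies $\tau^{*}(t) = t + a_\tau t^{2} + O(t^{3})$ with $a_\tau \in K^{*}$, and $\tau \mapsto a_\tau$ embeds $T \cong (\mathbb{F}_q,+)$ into $(K,+)$. A routine expansion of $(c \tau c^{-1})^{*}(t)$ up to order $t^{2}$, using $c^{*}(t) = \chi(c)\,t + O(t^{2})$ and the corresponding series for $(c^{-1})^{*}$, gives the compatibility relation
\[
a_{c\tau c^{-1}} \;=\; \chi(c)^{-1}\, a_\tau, \qquad c \in C,\ \tau \in T.
\]
Hence $C$ acts on $T \subseteq K$ by scalar multiplication via $\chi^{-1}$, and this action is faithful: $\chi(c)=1$ would place $c$ in the $p$-group $T$, forcing $c = 1$. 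The set $\{\lambda \in K : \lambda T \subseteq T\}$ is a subring of $K$, hence a finite subfield $\mathbb{F}_{p^{r}}$ with $T$ an $\mathbb{F}_{p^{r}}$-module; consequently $p^{r} \le q$, and the scalars preserving $T$ form a subgroup of order at most $q-1$, giving $m \le q-1$.

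For the centralizer, any $g \in C_{\aut(\cX)}(T)$ permutes $\mathrm{Fix}(T) = \{P_1,P_2\}$. If $g$ fixes $P_1$ then $g = \tau e \in T \rtimes E$, and the displayed relation forces $\chi(e)=1$, hence $e = 1$ and $g \in T$. If $g$ swaps $P_1$ and $P_2$, then $g\Phi$ fixes $P_1$ and still centralizes $T$ (as $\Phi$ centralizes $H \supseteq T$); the previous case gives $g\Phi \in T$, so $g \in T\Phi$. Therefore $C_{\aut(\cX)}(T) = T \sqcup T\Phi$, which has index $2$ over $T$, matching the final assertion. The main obstacle is the local calculation producing $a_{c\tau c^{-1}} = \chi(c)^{-1} a_\tau$; once this is in hand, both the field-theoretic bound $|C| \le q-1$ and the centralizer count follow at once.
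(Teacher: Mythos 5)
Your argument is correct and lands in the same place as the paper's, but it is more self-contained. Where the paper simply cites \cite[Lemma 11.75(i)]{hirschfeld-korchmaros-torres2008} --- which says that a tame element of $G:=\aut(\cX)_{P_1}$ commuting with a nontrivial element of $T=G^{(1)}$ forces that element into $G^{(2)}$, contradicting Lemma \ref{ramif} --- your identity $a_{c\tau c^{-1}}=\chi(c)^{-1}a_\tau$ is exactly the mechanism behind that cited lemma (the action of $G/G^{(1)}$ on $G^{(1)}/G^{(2)}$ through the fundamental character), proved from scratch. The paper then finishes with a direct orbit count: the tame complement $V$ acts by conjugation on the $q-1$ nontrivial elements of $T$ with trivial stabilizers, hence $|V|\le q-1$; your multiplier-field bound $m\le q-1$ is an equivalent packaging of the same count, transported to the additive subgroup $a(T)\subseteq K$. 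One substantive point in your favor: the paper leaves the two ``in particular'' claims unproved, and your computation $C_{\aut(\cX)}(T)=T\sqcup T\Phi$ is the correct one --- the lemma's literal claim that this centralizer has order $2$ cannot hold, since the abelian group $T$ centralizes itself; the intended statement (and the one actually used later for $q=3$, where only the uniqueness of the involution $\Phi$ in $C_{\aut(\cX)}(T)$ matters) is that $C_{\aut(\cX)}(T)=T\times\langle\Phi\rangle$ has index $2$ over $T$. A minor remark: the conjugation placing $E$ inside $C$ is not needed for the divisibility $(q-1)\mid m$, since injectivity of $E\hookrightarrow G/T$ already gives it; it is only used at the end to identify $G$ with $T\rtimes E\le H$.
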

 \begin{proof} We may assume that $P=P_1$. Then the stabilizer of $P$ in $\aut(\cX)$ is $T\rtimes E$.  By way of contradiction, assume that $|V|>|E|=q-1$. Since $V$ is contained in the normalizer $N(T)$ of $T$ in $\aut(\cX)$, $V$ acts by conjugation on the set of nontrivial elements of $T$. Therefore, the assumption $|V|>q-1$ yields that some non-trivial element $v\in V$ commutes with some non-trivial element of $t\in T$. From \cite[Lemma 11.75(i)]{hirschfeld-korchmaros-torres2008},  the second ramification group of $\Gamma_{P_1}$ contains $t$, a contradiction with Lemma \ref{ramif}.
 \end{proof}
\begin{lemma}
\label{lem211021} There is a point $P\in \cX$ such that the intersection of $H$ with the stabilizer of $P$ in $\aut(\cX)$  has order $2$.
\end{lemma}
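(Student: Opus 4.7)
The plan is to locate a fixed point $P\in\cX$ of a specific involution $\iota'\in H$ lying above a short-orbit point of the quotient curve $\cD_\lambda=\cX/\langle\Phi\rangle$, and to verify that no other non-trivial element of $H$ stabilizes $P$.

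I would first recall that, by Proposition~\ref{pro111021} together with the fact that $\Phi$ centralizes $H$, the natural map $H\to\aut(\cD_\lambda)$ is an isomorphism onto $\PGL(2,q)$. Proposition~\ref{pro241021} then provides two short $H$-orbits on $\cD_\lambda$ of length $\tfrac{1}{2}(q^3-q)$, each consisting of points whose stabilizer in $H$ is generated by a single involution (one orbit per conjugacy class of involutions in $\PGL(2,q)$). I would fix an involution $\iota\in H$ and a point $\bar P$ on such a short orbit that is fixed by $\iota$.

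I would next exploit the unramified Kummer structure of $\cX\to\cD_\lambda$ established in Proposition~\ref{pro111021A}, writing $F=F_1(\sqrt w)$ with $w=(x-y)^2$, which in the quotient coordinates $(u,v)=(x+y,2xy)$ equals $u^2-2v$. The two preimages of $\bar P$ in $\cX$ are $P$ and $\Phi(P)$. The involution $\iota$ has a distinguished lift $\iota'\in H$, and $\iota'$ acts on $\sqrt w$ via $\iota'(\sqrt w)=\epsilon\,c\,\sqrt w$ for a well-defined rational function $c\in F_1$ satisfying $\iota(w)=c^2\,w$, and a sign $\epsilon\in\{\pm 1\}$ intrinsic to the identification $H\hookrightarrow\aut(F)$ (the other lift $\Phi\iota'$ gives the opposite sign). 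The key Kummer criterion is that $\iota'$ fixes $P$ rather than exchanging it with $\Phi(P)$ if and only if $\epsilon\,c(\bar P)=1$.

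I would carry out an explicit computation of $c$ and $\epsilon$ in the coordinates $(u,v)$ to verify that at least one of the two short orbits furnishes a point $\bar P$ with $\epsilon\,c(\bar P)=1$. Once such a point $P\in\cX$ is exhibited, we have $\iota'\in H_P$ and hence $|H_P|\geq 2$. On the other hand, $P$ lies above a point of a short orbit of $\aut(\cD_\lambda)$ of length $\tfrac{1}{2}(q^3-q)$, so $H_{\bar P}=\langle\iota\rangle$ has order $2$; since $H_P\subseteq H_{\bar P}$, this forces $|H_P|=2$, and therefore $H\cap\aut(\cX)_P=H_P$ has order $2$ as required. The main obstacle will be step three: carrying out the Kummer sign computation and isolating the correct short orbit in which the distinguished lift $\iota'\in H$ (as opposed to $\Phi\iota'$) truly fixes a preimage, which reduces to a parity check on the factorization $\iota(w)=c^2 w$ and may require a small case analysis on the conjugacy class of $\iota$.
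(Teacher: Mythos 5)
Your outline is sound and, to your credit, it isolates exactly the point that the paper's own one--line proof passes over in silence: an involution $\iota$ of $\aut(\cD_\lambda)$ fixing a tame-orbit point $\bar P$ has two lifts to $\aut(\cX)$, namely $\iota'\in H$ and $\Phi\iota'\notin H$, precisely one of which fixes the two preimages of $\bar P$ while the other swaps them, and the lemma as literally stated needs the fixing lift to be the one in $H$. The gap is that the decisive step --- your Kummer sign computation showing that for at least one of the two short orbits the fixing lift lies in $H$ --- is only announced, not carried out, and it is exactly where the proof lives or dies.

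In fact that computation cannot come out the way you need. Apply the Riemann--Hurwitz formula to $\cX\to\cX/H$: by Proposition \ref{genusBKSc} we have $2\mathfrak{g}(\cX)-2=2q^2-2q-4$; the points of $\cX$ fixed by nontrivial $p$-elements of $H$ are the $2(q+1)$ points of the unique non-tame orbit, each with $H$-stabilizer of order $q(q-1)$ by Lemma \ref{lemA10jun2019} and different exponent $(q(q-1)-1)+(q-1)=q^2-2$ by Lemma \ref{ramif}; and $2(q+1)(q^2-2)=2q^3+2q^2-4q-4=2\mathfrak{g}(\cX)-2+2|H|$. Since the degree of the different is at least this wild contribution, equality is forced: $\mathfrak{g}(\cX/H)=0$ and $H$ has \emph{no} tame ramification on $\cX$ whatsoever. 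Hence no involution of $H$ fixes a point of $\cX$ outside the non-tame orbit, where the $H$-stabilizers have order $q(q-1)>2$; the involution fixing a preimage of a tame-orbit point of $\cD_\lambda$ is therefore always $\Phi\iota'$, never $\iota'$, i.e.\ your sign $\epsilon\,c(\bar P)$ is $-1$ for every admissible $\bar P$, and no point $P$ with $|H\cap\aut(\cX)_P|=2$ exists. What is true --- and all that the paper's own proof actually establishes, and all that is used afterwards in the proof of Theorem \ref{prop10jun2019} and in the $q=3$ case --- is the statement with $H$ replaced by $H\times\langle\Phi\rangle$: some point of $\cX$ lying over a tame orbit of $\cD_\lambda$ is fixed by an involution of $H\times\langle\Phi\rangle$, so that $\aut(\cX)$ has a tame short orbit. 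Your closing reduction ($H_P\subseteq H_{\bar P}$ forces $|H_P|=2$ once $\iota'\in H_P$) is correct but is never triggered; if you reformulate the lemma for $H\times\langle\Phi\rangle$, the same reduction applied to $\langle\Phi\iota'\rangle$ finishes the argument without any sign computation.
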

\begin{proof} According to Propositions \ref{pro111021} and \ref{pro241021}, take a point $P\in \cX$ such that the point $\bar{P}\in\cD_\lambda$ lying under $P$ is in a tame orbit of $\aut(\cD_\lambda)=\PGL(2,q)$. Then the stabilizer of $\bar{P}$ is an involution. Since $\Phi$ centralizes $H$, there is an involution in $H\times \langle \Phi \rangle$ which fixes a point of $\cX$. Since this fixed point is off the unique non-tame orbit of $\aut(\cX)$, it is a point of a tame orbit of $\aut(\cX)$.
\end{proof}
\begin{theorem}
 \label{prop10jun2019} Let $\cX$ be a nonsingular model of the plane curve $\cC_\lambda$ with homogeneous equation (\ref{eq00}). If  $q>3$ and $\lambda \not\in \{-1,1\}$ then
 $
 \aut(\cX) = H \times \langle \Phi \rangle.
  $
 In particular, $\aut(\cX)\cong \PGL(2,q)\times C_2$, and it has order $2(q^3-q)$.
 \end{theorem}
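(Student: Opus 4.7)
The plan is to upgrade the inclusion $H\times\langle\Phi\rangle\subseteq G:=\aut(\cX)$, which already yields $|G|\geq 2(q^3-q)$, to an equality of orders. All the ingredients prepared in the preceding lemmas come together here: by Lemma \ref{lem10jun2019} the group $G$ has a unique non-tame orbit $O$ on $\cX$ (containing $P_1$), and by Lemma \ref{lemA10jun2019} the stabilizer $G_{P_1}$ has order $q(q-1)$ and is contained in $H\times\langle\Phi\rangle$. Orbit--stabilizer therefore gives $|G|=|O|\cdot q(q-1)$, and the $(H\times\langle\Phi\rangle)$-orbit of $P_1$ already provides $2(q+1)$ points of $O$, so $|O|\geq 2(q+1)$. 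The task reduces to showing the matching upper bound $|O|\leq 2(q+1)$, i.e.\ $|G|\leq 2(q^3-q)$.

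To obtain this bound I would apply the Hurwitz genus formula to the Galois cover $\cX\to\cX/G$. Using $g(\cX)=q^2-q-1$ (Proposition \ref{genusBKSc}) and Lemma \ref{ramif} (the second ramification group at $P_1$ is trivial, so the different exponent at $P_1$ equals $(q(q-1)-1)+(q-1)=q^2-2$), and noting that by Lemma \ref{lem10jun2019} every other short $G$-orbit is tame with stabilizer of some order $e_j$, the Hurwitz formula reads
\[
2(q^2-q-2)\;=\;|G|(2\bar g-2)\;+\;\frac{|G|(q^2-2)}{q(q-1)}\;+\;|G|\,S,
\]
where $\bar g=g(\cX/G)$ and $S=\sum_j(1-1/e_j)$. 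A short inspection shows that $|G|\geq 2(q^3-q)$ forces $\bar g=0$ and $S\leq 1$; moreover, writing $|G|=2k(q^3-q)$ with $k\in\mathbb{Z}_{\geq 1}$ (allowed by Lagrange since $H\times\langle\Phi\rangle\leq G$), one computes
\[
S\;=\;1-\frac{(k-1)(q-2)}{k\,q(q-1)}.
\]

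The case $k=1$ yields exactly $S=1$, and this value is realized by two tame short orbits with stabilizers of order $2$: via Propositions \ref{pro111021} and \ref{pro111021A} the double cover $\cX\to\cX/\langle\Phi\rangle\simeq\cD_\lambda$ is étale, and by Proposition \ref{pro241021} the group $\aut(\cD_\lambda)=\PGL(2,q)$ has two tame short orbits on $\cD_\lambda$ of length $(q^3-q)/2$ with involutive stabilizers, whose pullbacks furnish the required pair of tame $G$-orbits. It remains to rule out $k\geq 2$. Here I would exploit the Sylow-theoretic analysis carried out in the proofs of Lemmas \ref{lem29nov2018}--\ref{lemA10jun2019}: $|N_G(T)|=2q(q-1)$, so $G$ has $k(q+1)$ Sylow $p$-subgroups, of which only $q+1$ lie inside $H$, and each Sylow fixes exactly a pair $\{P,\Phi(P)\}\subseteq O$ (Lemma \ref{lem29nov2018A}). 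For $k\geq 2$ this produces $(k-1)(q+1)$ extra Sylow subgroups, generating extra $(H\times\langle\Phi\rangle)$-orbits in $O$ whose existence is incompatible with the rigid tame stabilizer structure forced by the formula above (each tame $e_j$ must be an integer $\geq 2$ coprime to $p$ and available as a stabilizer order in a group acting on $\cX$ with $H\cong\PGL(2,q)$ inside).

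The main obstacle is precisely this final arithmetic/group-theoretic check: showing that for every odd prime power $q>3$ and every $k\geq 2$ the prescribed value $1-(k-1)(q-2)/(kq(q-1))$ of $S$ admits no expression as a sum $\sum(1-1/e_j)$ under the above constraints. Once this is done, $k=1$ is forced, so $|G|=2(q^3-q)=|H\times\langle\Phi\rangle|$, giving $G=H\times\langle\Phi\rangle\cong\PGL(2,q)\times C_2$ as claimed.
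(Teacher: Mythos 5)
Your framework is the same as the paper's: bound $|\aut(\cX)|$ from above via Riemann--Hurwitz applied to $\cX\to\cX/\aut(\cX)$, feeding in the unique non-tame orbit (Lemma \ref{lem10jun2019}), the stabilizer order $q(q-1)$ and different exponent $q^2-2$ at $P_1$ (Lemmas \ref{lemA10jun2019} and \ref{ramif}), the forced genus-zero quotient, and the existence of a tame short orbit (Lemma \ref{lem211021}). Your identity $S=1-\frac{(k-1)(q-2)}{k\,q(q-1)}$ is correct and, specialized to a single tame orbit, is exactly the paper's equation $(q-2)(d-1)=\frac{2d}{d^*}\cdot\frac{q(q-1)}{2}$ with $k=d$, $e=d^*$. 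The difficulty is that the step you explicitly defer --- showing that for $k\ge 2$ the value $S$ admits no expression $\sum_j(1-1/e_j)$ --- is the entire content of the theorem, and the purely arithmetic check you envisage cannot succeed. Since $\bar g=0$ and $S<1$ force exactly one tame short orbit with $e=\frac{kq(q-1)}{(k-1)(q-2)}$, the question is whether this is ever an admissible integer prime to $p$; it is: for $q=5$ and $k=q^2-q+1=21$ one gets $e=7$. So no argument using only the integrality and $p$-coprimality of $e$ closes the case $k\ge 2$; your appeal to ``extra Sylow subgroups incompatible with the rigid tame stabilizer structure'' gestures at the missing input but contains no proof.

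The paper supplies that input as follows. Comparing the $\aut(\cX)$-orbit of a point in the tame orbit with its $(H\times\langle\Phi\rangle)$-orbit gives the divisibility $e\mid 2k$ (the paper's $d^*\mid 2d$); combined with $\gcd\bigl(q-2,\tfrac12 q(q-1)\bigr)=1$ this forces $k=q^2-q+1$ and $e=k/(q-2)$, whence $q-2\mid q^2-q+1$, i.e.\ $q-2\mid 3$. For the configuration with two tame short orbits (which the paper must treat separately, using the dichotomy $e_{Q_1}=e_{Q_2}=2$ versus $|\aut(\cX)|<84(\mathfrak{g}-1)$ from the proof of \cite[Theorem 11.56]{hirschfeld-korchmaros-torres2008}), the analysis reduces to $3\le k\le 6$ and $k\equiv 1\pmod q$, leaving only $k=4$, $q=9$, eliminated by direct computation; note that your own reduction via $S<1$ actually disposes of this configuration more cleanly once $\bar g=0$ is known. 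Be warned, though, that even the paper's endgame is delicate: the asserted contradiction ``$q-2\nmid q^2-q+1$ for $q>3$'' fails at $q=5$, so the residual case $q=5$, $k=21$, $e=7$ needs a separate argument. In any event, until the exclusion of $k\ge 2$ is carried out with such additional group-theoretic constraints, your proof is incomplete at its decisive step.
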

 \begin{proof} According to Lemma \ref{lem10jun2019}, let $\Omega$ be the unique non-tame orbit of $\aut(\cX)$. Assume on the contrary $|\aut(\cX)>|H||\langle \Phi \rangle|=2(q^3-q)$. Then $|\aut(\cX)|=2d(q^3-q)$ for an integer $d\ge 2$ so that $d=[\aut(\cX):H\times \langle \Phi \rangle]$. Actually $d\ge 3$, otherwise $H\times \langle \Phi \rangle$ is a normal subgroup of $\aut(\cX)$, and hence $\aut(\cX)$ preserves the unique non-tame orbit of $H\times \langle \Phi\rangle$. But then in that orbit the $1$-point stabilizer of $\aut(\cX)$ would be larger than the $1$-point stabilizer of $H\times \langle \Phi\rangle$, contradicting Lemma \ref{lemA10jun2019}.
 This argument also shows $|\Omega|=2dq(q-1)$. Furthermore, $d\neq p$ by Lemma \ref{lem29nov2018}.

 Let $P$ denote either $P_1$  or $P_2$. Then $P\in \Omega$. Since $T$ is the Sylow $p$-subgroup of $\Gamma$ which fixes $P$ and $E$ is its complement in the stabilizer of $P$, Lemmas \ref{lemA10jun2019} and \ref{ramif} yield that the degree of the different $d_P$ is equal to $|E|(|T|-1)+|T|-1=|E||T|+|T|-2 = q^2-2$. Moreover, Lemma \ref{lem211021} shows that $\aut(\cX)$ has a tame short orbit. If $\aut(\cX)$ has $k$ tame short orbits then the Riemann-Hurwitz formula applied to the cover $\cX\rightarrow \cX/\aut(\cX)$ reads
$$
 2q^2-2q-4 = -4d(q^3-q)+ 2d(q+1)(q^2-2) + c_1(e_{Q_1}-1)+\ldots c_k(e_{Q_k}-1)
$$
where $e_{Q_i}$ stands for the ramification index of any representative point $Q_i$ from a $\Gamma$-orbit of length $c_i$, and $c_i\ge 1$ is the length of the $\Gamma$-orbit containing $Q_i$. Therefore, $c_1(e_{Q_1}-1)+\ldots c_k(e_{Q_k}-1)>0$.
From the proof of Cases (I) and (II) in \cite[Theorem 11.56]{hirschfeld-korchmaros-torres2008}, as $|\aut(\cX)|\ge |H\times \langle \Phi \rangle|=2(q^3-q)>12(q^2-q-2)=12(\mathfrak{g}-1)$, we have $k\le 2$.


We are left with two cases, namely $k=1$ and $k=2$ respectively.

If $k=1$, then the Riemann-Hurwitz formula applied to $\aut(\cX)$ yields
\begin{equation}
\label{eq24oct21}
2q^2-2q-4 = -4d(q^3-q) +2d(q+1)(q^2-2)  + 2d(q^3-q)(d^*-1)/d^*
\end{equation}
where $d^*$ denotes the order of the stabilizer of a point in the
unique tame orbit. We show that $d^*$ divides $2d$. Let $\Delta_1$ and $\Delta_2$ denote the orbits of $P$ in $\aut(\cX)$ and in $H\times \langle \Phi \rangle$, respectively. Then $|\Delta_2|$ divides $|\Delta_1|$ as $H\times \langle \Phi \rangle$ is a subgroup of $\aut(\cX)$, and the claim follows from $d^*|\Delta_1|=2d(q^3-q)$ and $2|\Delta_2|=2(q^3-q)$. Now, (\ref{eq24oct21}) can be written as
$$2q^2-2q-4 = -4d(q^3-q) +2d(q+1)(q^2-2)  + \frac{2d}{d^*}(q^3-q)(d^*-1)$$ whence
  \begin{equation}\label{2orbits}
 (q-2)(d-1) = \frac{2d}{d^*}\frac{q(q-1)}{2}.
 \end{equation}
 Since both $2d/d^*$ and $\ha q(q-1)$ are integers, ${\rm{g.c.d.}}(2d/d^*,d-1)\le 2$ and ${\rm{g.c.d.}}(q-2, \ha q(q-1))=1$ for $q>3$, Equation  (\ref{2orbits}) implies $d/d^*=q-2$. Thus, (\ref{2orbits}) reads $d-1=q(q-1)$ whence $d=q^2-q+1$. Therefore, $q-2$ divides $q^2-q+1$ which is impossible for $q>3$.

 Finally, let $k =2$. From the proof of \cite[Theorem 11.56]{{hirschfeld-korchmaros-torres2008}}, either $e_{Q_1}=e_{Q_2}=2$, or $|\aut(\cX)|<84(\mathfrak{g}(\cX)-1)$.
 In the former case, the Riemann-Hurwitz formula applied to $\aut(\cX)$ yields
$$
 2q^2-2q-4 = -4d(q^3-q) +2d(q+1)(q^2-2) + 2d(q^3-q) = 2d(q^2-q-1),
$$
 whence $d = 1$ follows, a contradiction. Therefore $2d(q^3-q)=|\aut(\cX)|\le 84(\mathfrak{g}(\cX)-1)<84(q^2-q)$ whence $2d(q+1)<84$ follows. By $d\ge 3, d\neq p$, this implies $3\le d \le 6$. For $i=1,2$, let $P_i$ be a point in the short tame orbit $\Delta_i$ of $\aut(\cX)$, and let $d_i^*$ be the order of the stabilizer of $P$ in $\aut(\cX)$. As we have shown in the proof of case $k=1$, this yields $d_i^*|2d$. From the Riemann-Hurwitz
 formula,
 $$
 2q^2-2q-4 = -4d(q^3-q) +2d(q+1)(q^2-2) + \frac{2d}{d_1^*}(q^3-q)(2d_1^*-1)+\frac{2d}{d_2^*}(q^3-q)(2d_2^*-1),
$$
whence $d\equiv 1 \pmod q$ follows. Since $3\le d \le 6$ and $q>3$ is odd, it follows that either $d=4$ and $q=3^h$, or $d=6$ and $q=5^h$. This together with $2d(q+1)<84$ and $d\neq p$ imply $d=4$ and $q=9$. Finally, a direct computation rules out this possibility.
\end{proof}

\begin{proposition}
\label{pro251021} Let $T$ be a Sylow $p$-subgroup of $H$. Then the quotient curve $\cX/T$ is isomorphic to the genus $q-2$ hyperelliptic curve $\cF_\lambda$  of affine equation
\begin{equation}
\label{eq281121} W^2=V^{2q-2}+2\lambda V^{q-1}+1.
\end{equation}
If $\gamma(\cX)$ and $\gamma(\cX/T)$ are the $p$-ranks of $\cX$ and $\cX/T$ respectively, then $\gamma(\cX)=q(\gamma(\cX/T))+q-1.$ In particular, $\cX$ is ordinary if and only if $\cX/T$ is ordinary.
\end{proposition}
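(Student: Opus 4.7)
\medskip

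\noindent\textbf{Proof plan.} The group $T=\{T_\alpha:\alpha\in\mathbb{F}_q\}$ is a Sylow $p$-subgroup of $H$ since $|T|=q$ and $|H|=q(q^2-1)$. To identify the fixed field $F^T$ inside $F=K(x,y)$, I would look for two natural $T$-invariants. Setting $s=x+y$ and $d=x-y$, note that $T_\alpha$ acts by $s\mapsto s+2\alpha$ and fixes $d$; since $2\alpha\in\mathbb{F}_q$, the function $s^q-s$ is also $T$-invariant. I would then put $V=d$ and $W=(s^q-s)/V$, so that $V,W\in F^T$ and the generator $x$ of $F$ over $K(V,W)$ satisfies the order-$q$ equation $2x^q-2x=VW+V^q-V$ obtained from $s=2x-V$. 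Hence $[F:K(V,W)]\le q=|T|$, which forces $K(V,W)=F^T$ and identifies the quotient curve.

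\medskip

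\noindent The next step is to derive the claimed affine equation. Setting $p=xy$, one has $p=(s^2-d^2)/4$ and the defining relation $s^{q+1}-2(p^q+p)-\lambda d^{q+1}=0$ can be rewritten, using $4^q=4$, as
\begin{equation*}
(s^q-s)^2=d^{2q}-2\lambda d^{q+1}+d^2=d^2\bigl(d^{2q-2}-2\lambda d^{q-1}+1\bigr).
\end{equation*}
Dividing by $V^2=d^2$ yields $W^2=V^{2q-2}-2\lambda V^{q-1}+1$, which after the harmless substitution $\lambda\mapsto-\lambda$ (or an appropriate rescaling of $V$) gives the stated equation of $\cF_\lambda$. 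To see that the genus of this hyperelliptic model is $q-2$, I would factor the right-hand side as $(V^{q-1}-\mu)(V^{q-1}-\mu^{-1})$ where $\mu+\mu^{-1}=2\lambda$; the hypothesis $\lambda\notin\{1,-1\}$ makes $\mu\neq\mu^{-1}$ and both nonzero, so the polynomial is separable of degree $2q-2$ and the standard genus formula gives $(2q-2-2)/2=q-2$.

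\medskip

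\noindent For the $p$-rank identity I would apply the Deuring--Shafarevich formula to the Galois $p$-cover $\cX\to\cX/T$. By Lemma \ref{lem29nov2018A} the non-trivial elements of $T$ share exactly the two fixed points $P_1,P_2$ on $\cX$, so the cover is totally (and wildly) ramified at precisely these two points, with trivial ramification elsewhere. Hence
\begin{equation*}
\gamma(\cX)-1=q\bigl(\gamma(\cX/T)-1\bigr)+2(q-1),
\end{equation*}
which rearranges to $\gamma(\cX)=q\,\gamma(\cX/T)+q-1$. The final ``ordinary'' equivalence is then immediate: by Proposition \ref{genusBKSc} one has $\mathfrak g(\cX)=q^2-q-1=q(q-2)+q-1=q\,\mathfrak g(\cX/T)+q-1$, so $\gamma(\cX)=\mathfrak g(\cX)$ if and only if $\gamma(\cX/T)=q-2=\mathfrak g(\cX/T)$.

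\medskip

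\noindent The main obstacle is the first part: one must verify that the two $T$-invariants $V$ and $W$ really generate $F^T$ (rather than a proper subfield) and that the resulting equation $W^2=V^{2q-2}\mp 2\lambda V^{q-1}+1$ is indeed a model of $\cX/T$. The degree count $[F:K(V,W)]\le q$ combined with $|T|=q$ handles this cleanly, but one must be careful with the separability of $W$ over $K(V)$ (equivalent to $\lambda^2\neq 1$) to conclude that $K(V,W)$ is a genuine hyperelliptic field rather than a purely inseparable extension of $K(V)$. Once this is secured, the $p$-rank computation is a direct application of Deuring--Shafarevich with the ramification data already furnished by the earlier lemmas.
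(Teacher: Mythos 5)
Your proposal is correct and follows essentially the same route as the paper: your invariants $V=x-y$ and $W=(s^q-s)/V$ coincide with the paper's $v$ and $w=2\bigl(u-\ha(v^q-v)\bigr)v^{-1}$ (since $u-\ha(v^q-v)=\ha(s^q-s)$), and the Deuring--Shafarevich step with the two totally ramified points $P_1,P_2$ is identical. The only cosmetic differences are that you obtain the middle coefficient as $-2\lambda$ rather than $+2\lambda$ (the two models are indeed equivalent under $V\mapsto \zeta V$ with $\zeta^{q-1}=-1$, as you note), and you read off the genus $q-2$ from separability of the degree-$(2q-2)$ polynomial instead of deriving it from the Riemann--Hurwitz formula applied to the cover $\cX\rightarrow\cX/T$.
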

\begin{proof} As we have already pointed out, the group $T=\{T_\alpha|\alpha\in \mathbb{F}_q\}$ has two fixed points on $\cX$, namely $P_1$ and $P_2$. Proposition \ref{genusBKSc} together with Lemma \ref{ramif} and the Riemann-Hurwitz formula applied to $T$ give that $\cX/T$ has genus $q-2$.
Let $F_0=F(u,v)$ be the subfield of $F$ which is generated by $u=x^q-x$ and $v=x-y$. Then $[F:F_0]=q$ and $F_0$ is the fixed field of $T$. Furthermore,
$$
\begin{array}{lll}
(x+y)^{q+1}-2((xy)^{q}+xy)-\lambda (x-y)^{q+1}=(2x-v)^{q+1}-2(x^q(x-v)^q+x(x-v)-\lambda v^{q+1}=\\
2u^2-2(v^q-v)u-(\lambda-1)v^{q+1}=2(u-\ha(v^q-v))^2-\frac{1}{2}(v^q-v)^2-(\lambda-1)v^{q+1}=\\
2(u-\ha(v^q-v))^2-\frac{1}{2}(v^{2q}+v^2)-\lambda v^{q+1}=\frac{1}{2}[(2(u-\ha(v^q-v))v^{-1})^2-v^{2q-2}-2\lambda v^{q-1}-1].\\
\end{array}
$$
whence, with $w=(2(u-\ha(v^q-v))v^{-1})$, the first claim follows. Now, from the Deuring-Shafarevic formula \cite[Theorem 11.62]{hirschfeld-korchmaros-torres2008}, $\gamma(\cX)-1=q(\gamma(\cX/T)-1)+2(q-1)$, whence the second claim follows. Comparison with
$\mathfrak{g}(\cX)-1=q(\mathfrak{g}(\cX/T)-1)+2(q-1)$ shows the last claim.
\end{proof}
\begin{rem} \em{As $\cF_\lambda$ has genus $\bar{\mathfrak{g}}=q-2$, its equation (\ref{eq281121}) is $W^2=V^{2\bar{\mathfrak{g}}+2}+\tau V^{\bar{\mathfrak{g}}+1}+1$ with $\tau=2\lambda$. It should be noticed that the already quoted paper of Miller \cite{Mi} also concerned the Hasse-Witt matrix of hyperelliptic function fields of equation $w^2=v^{2\bar{\mathfrak{g}}+2}+\tau v^{\bar{\mathfrak{g}}+1}+1$. Unfortunately, his claim stating invertibility apart from finitely many values of $\tau$, does not apply to $\cF_\lambda$  since that claim, \cite[Proposition 2]{Mi}, was proven under the extra-condition that $p\mid \bar{\mathfrak{g}}$ whereas $\bar{\mathfrak{g}}=q-2$ in our case. Thus, it remains open the problem of finding values of $\lambda$ for which the curves $\cX_\lambda$ is ordinary.}
\end{rem}

  We just need to consider the case $q = 3$; in this case, we prove the following.

  \begin{theorem}
 Theorem \ref{prop10jun2019} holds true for $q=3$, with a unique exception, $\lambda=0$, in which case $\aut(\cX)\cong SmallGroup(192,181)$.
  \end{theorem}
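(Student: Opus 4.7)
The plan is to follow the proof of Theorem \ref{prop10jun2019} at $q=3$, locate the places where $q>3$ was essential, and close the gap by hand for $\lambda\ne 0$ and by a direct orbit--stabiliser computation for $\lambda=0$. At $q=3$, $\cX_\lambda$ has genus $5$ and $|H\times\langle\Phi\rangle|=48$; the Riemann--Hurwitz dichotomy from Theorem \ref{prop10jun2019} applied at $q=3$ reads: either $\aut(\cX_\lambda)=H\times\langle\Phi\rangle$, or the index $d=[\aut(\cX_\lambda):H\times\langle\Phi\rangle]$ belongs to $\{4,7\}$, the two exceptional values that were discarded in the original proof only by the hypothesis $q>3$.

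For $\lambda\in K\setminus\{0,\pm 1\}$, I would rule out both $d=4$ and $d=7$ via the \'etale double cover $\cX_\lambda\to\cD_\lambda$ of Propositions \ref{pro111021}--\ref{pro111021A}. Denoting $N=N_{\aut(\cX_\lambda)}(\langle\Phi\rangle)$, the inclusion $H\times\langle\Phi\rangle\subseteq N$ and the embedding $N/\langle\Phi\rangle\hookrightarrow\aut(\cD_\lambda)=\PGL(2,3)$ (Theorem \ref{th131021}) force $N=H\times\langle\Phi\rangle$, so any extra automorphism $\alpha\in\aut(\cX_\lambda)\setminus N$ would produce a conjugate involution $\alpha\Phi\alpha^{-1}\ne\Phi$. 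A direct case analysis based on the size of $\aut(\cD_\lambda)\cong S_4$ (in particular, on the fact that $S_4$ has no subgroup of index $7$ and only a restricted list of subgroups of index $4$, all of which fail to preserve the \'etale cover $\cX_\lambda\to\cD_\lambda$ for $\lambda\ne 0,\pm 1$) then rules out both exceptional values.

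For $\lambda=0$, $\cD_0=\cH_3$ and $\aut(\cH_3)=\PGU(3,3)$ has order $6048$ (Section \ref{sec:pgu}). The cover $\cX_0\to\cH_3$ is classified by a class $\varepsilon\in\mathrm{Pic}^0(\cH_3)[2]\cong(\mathbb{F}_2)^6$, and $N/\langle\Phi\rangle=\mathrm{Stab}_{\PGU(3,3)}(\varepsilon)$. A $7$-element of $\PGU(3,3)$ acts on $(\mathbb{F}_2)^6$ with minimal polynomial of degree $3$ (since $7\mid 2^3-1$) and hence fixes no nonzero vector, so $7\nmid|\mathrm{Stab}(\varepsilon)|$; this rules out $d=7$. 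Computing the $\PGU(3,3)$-orbit of $\varepsilon$ then gives length $63$---consistent with the existence of maximal subgroups of index $63$ in $\PSU(3,3)\cong\PGU(3,3)$---so $|\mathrm{Stab}(\varepsilon)|=96$ and $|\aut(\cX_0)|=192$, confirming $d=4$. The final identification $\aut(\cX_0)\cong\mathrm{SmallGroup}(192,181)$ is a finite group-theoretic check in \texttt{GAP}, performed on the generators $\beta_1,\ldots,\beta_4$ together with a new automorphism lifted from $\mathrm{Stab}(\varepsilon)$.

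The main obstacle is the orbit--stabiliser computation at $\lambda=0$: identifying $\varepsilon$ explicitly on the Hermitian curve $\cH_3$ and checking the length of its $\PGU(3,3)$-orbit, together with the explicit exhibition of the extra automorphism lifting to $\cX_0$. Both steps are computational in nature and best carried out in \texttt{Magma} or \texttt{GAP}; nevertheless, the structural link with the $63$-point transitive action of $\PSU(3,3)$ on the $2$-torsion of the Hermitian Jacobian makes the final outcome quite natural.
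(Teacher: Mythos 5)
Your idea of pushing everything down to the quotient $\cD_\lambda=\cX/\langle\Phi\rangle$ and classifying the unramified double cover by a $2$-torsion class in $\mathrm{Pic}^0(\cD_\lambda)$ is genuinely different from what the paper does, and it is attractive; but as written it has a gap at exactly the point where the paper spends most of its effort. Your own computation shows that $N=N_{\aut(\cX)}(\langle\Phi\rangle)$ satisfies $N/\langle\Phi\rangle\hookrightarrow\aut(\cD_\lambda)$, hence $|N|\le 2|\aut(\cD_\lambda)|$; under the hypothesis $|\aut(\cX)|>48$ this says precisely that $\Phi$ is \emph{not} normal in $\aut(\cX)$ (for $\lambda\neq 0$), and from that point on the descent to $\cD_\lambda$ tells you nothing about the extra automorphisms. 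The ``direct case analysis'' you invoke to finish is not supplied, and it is unclear how it would go: having several conjugates of $\Phi$, each giving its own \'etale double quotient, is not by itself a contradiction. The same issue undermines the case $\lambda=0$: your orbit--stabiliser computation bounds $|N|$, not $|\aut(\cX_0)|$, so concluding $|\aut(\cX_0)|=2|\mathrm{Stab}(\varepsilon)|=192$ presupposes $\aut(\cX_0)=N$, i.e.\ that $\Phi$ is normal --- which is the statement to be proved. The paper's proof is organised around exactly this obstacle: it first pins down $|\aut(\cX)|=192$ by a prime-divisor analysis (Homma's bounds for genus $5$ give that the only primes $u>3$ with a fixed point are $5$ and $11$, and both are eliminated), then extracts three structural properties of $\aut(\cX)$, runs a MAGMA search over groups of order $192$ with those properties, eliminates the one centerless candidate by Riemann--Hurwitz, and only \emph{then} concludes that $\Phi$ is the central involution, so that $\aut(\cX)/\langle\Phi\rangle\le\aut(\cD_\lambda)$ forces $\lambda=0$.

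Two smaller points. First, your reduction to $d\in\{4,7\}$ by ``rerunning'' the proof of Theorem \ref{prop10jun2019} at $q=3$ is not checked: the bound $2d(q+1)<84$ only gives $d\le 10$ when $q=3$, so the $k=2$ branch also leaves $d=10$ on the table, and the $k=1$ branch needs the divisibility analysis redone at $q=3$ (it yields $d\in\{4,7\}$ with $d^*\in\{8,7\}$). Second, the assertion that a $7$-element of $\PGU(3,3)$ fixes no nonzero vector of $\mathrm{Pic}^0(\cH_3)[2]\cong(\mathbb{F}_2)^6$ does not follow merely from $7\mid 2^3-1$: one must rule out a trivial constituent in the $\mathbb{F}_2[C_7]$-module structure, which requires knowing the action of $\PGU(3,3)$ on the $2$-torsion of the (supersingular) Jacobian. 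If you want to pursue your route, the essential missing lemma is the normality of $\langle\Phi\rangle$ in $\aut(\cX)$; without an independent proof of that, the Prym-type classification cannot be brought to bear on the full automorphism group.
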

\begin{proof} We use some ideas and results from \cite{homma1980} and \cite{nazarpietro}. From Proposition \ref{genusBKSc}, $\cX$ is a genus $5$ curve. Assume that $\aut(\cX)$ is larger than $H\times \langle \Phi \rangle$. As we have pointed out at the beginning of the proof of Theorem
\ref{prop10jun2019}, $H\times \langle \Phi \rangle$ is not a normal subgroup of $\aut(\cX)$. Therefore, $|\aut(\cX)|=d|H\times \langle \Phi \rangle| =48d$ with $d\ge 4$. In particular, $|\aut(\cX)|>48=12(\mathfrak{g}(\cX)-1)$ and hence $\aut(\cX)$ has at most two tame short orbits.
Let $u>3$ be a prime dividing $|\aut(\cX)|$, and take a subgroup $U$ of $\aut(\cX)$ of order $u$. Since $2\mathfrak{g}(\cX)-2=8$ and $\rm{g.c.d.}(u,8)=1$, the Riemann Hurwitz formula applied to $U$ shows that $U$ has a fixed point on $\cX$. From Lemma \ref{lemA10jun2019}, any fixed point $P$  of $U$ is in a tame short orbit $\mathcal{O}$ of $\aut(\cX)$. By Lemma \ref{lem211021}, some point in a tame short orbit $\mathcal{Q}$ is fixed by an involution $J\in H\times \langle \Phi \rangle$. By the Riemann-Hurwitz formula, $J$ has either $4$, or $8$, or $12$ fixed points.  The latter case can be discarded since happens as it would imply that $\cX$ is hyperelliptic, which is not the case. If $\mathcal{O}=\mathcal{Q}$ then the stabilizer of $P$ has a cyclic subgroup of order $2u$ containing $U$. Therefore, $J$ centralizes $U$. Otherwise, $\mathcal{O}$ and $\mathcal{Q}$ are the two tame short orbits of $\cX$, and $|\aut(\cX)|\le 84(\mathfrak{g}(\cX)-1)=366$ by the proof of case (III) of \cite[Theorem 11.56]{hirschfeld-korchmaros-torres2008}. In the latter case, $d\le 7$.

From \cite[Theorem 1, Theorem 2(c)]{homma1980}, either $u=5$, or $u=11$. We investigate these possibilities separately.

If $u=11$, then \cite[Theorem 10]{nazarpietro} yields that $|\aut(\cX)| = 11\,c$, for $c \in \{1,2,3\}$ but this contradicts that $|\aut(\cX)| \geq 48$.

If $u=5$, then \cite[Proposition 11]{nazarpietro} shows that $U$ fixes two points on $\cX$ and that $\cX/U$ is elliptic.
Assume that $\aut(\cX)$ has a unique tame short orbit.
Since at least one fixed point of $U$ is also fixed by $J$, it turns out that $J$ has as many as $2+5m$ fixed points on $\cX$ where $m\ge 0$. But then $J$ has either $2$, or $7$, or $12$ fixed points, a contradiction.
Therefore, $\aut(\cX)$ has two short orbits, say  $\mathcal{O}$ and $\mathcal{Q}$. Then $|\aut(\cX)|\le 84(\mathfrak{g}(\cX)-1)=366$ implies $|\aut(\cX)|=240$, and the Riemann-Hurwitz formula applied to $\aut(\cX)$ reads $8 = -480+ 280 + (240-|\mathcal{O}|)+ (240-|\mathcal{Q}|),$
whence $|\mathcal{O}|+|\mathcal{Q}| =272$. But this contradicts  $|\mathcal{O}|+ |\mathcal{Q}|\leq 120 +48$.

We may assume $|\aut(\cX)|=192$. Take a point $P\in\cX$ fixed by the subgroup $T$ of $H$ order $3$. From Lemma \ref{lemA10jun2019}, the normalizer of $T$ has order $12$ and its center has order $2$. Therefore,
\begin{enumerate}
\item[(i)] the normalizer of a Sylow $3$-subgroup of $\aut(\cX)$ is a dihedral group of order $12$.
\end{enumerate}
Moreover, the subgroup $H\times \langle \Phi \rangle$ of $\aut(\cX)$ is not normal. In fact, otherwise, all Sylow $3$-subgroups of $\aut(\cX)$ would be contained in $H$, and hence their number would be four. On the other hand, from (i), that number is equal to $192/12=16$.
 \begin{enumerate}
\item[(ii)] $\aut(\cX)$ contains a non-normal subgroup isomorphic to $\PGL(2,3)\times C_2$.
\end{enumerate}
Let $S_2$ be a Sylow $2$-subgroup of $\aut(\cX)$. Then $S_2$ has order $64$. From \cite[Remark 3.5]{anbar}, $S_2$ has three orbits on $\cX$ and they have lengths $32,16$ and $8$ respectively. Therefore, if $P$ is a point in the $S_2$-orbit of length $8$, then the stabilizer of $P$ in $S_2$ is a cyclic group of order $8$. Therefore,
\begin{enumerate}
\item[(iii)] The exponent of $\aut(\cX)$ is at least $8$.
\end{enumerate}
By an exhaustive search supported by the MAGMA algebra system, there exist, up to isomorphisms, five groups of order $192$ satisfying the above three properties, each but one has center of order $2$. The exception is named $SmallGroup(192,956)$, and it has trivial center. We show that the exceptional case cannot actually occur in our case. Assume on the contrary $\aut(\cX)\cong SmallGroup(192,956)$. The MAGMA database shows that $\aut(\cX)$ has a unique normal subgroup $N$ of order $48$, namely $SmallGroup(48,3)$. Thus,   the Sylow $3$-subgroups of $N$ are self-normalizing in $N$. Therefore, $N$ has as many as $16$ Sylow $3$-subgroups. Also, each of them has exactly two fixed points by Lemma \ref{lem29nov2018A}.
From the Riemann Hurwitz formula applied to $N$, it follows $8=2\mathfrak{g}(\cX)-2\ge 96 (\mathfrak{g}(\cX/N)-1)+32\cdot 4$,
a contradiction. Therefore, the center of $\aut(\cX)$ contains a unique involution $\iota$. Actually $\Phi=\iota$ holds, since the last claim in Lemma \ref{lemA10jun2019} shows that $\Phi$ is the unique involution in the centralizer of the particular Sylow $3$-subgroup $T$ of $\aut(\cX)$.

Finally, from Proposition \ref{pro111021}, $\aut(\cX)/\langle\Phi \rangle$ is a subgroup of $\aut(\cD_\lambda)$ of order $96$. On the other hand, Theorem \ref{th131021} shows that $|\aut(\cD_\lambda)|=|\PGL(2,3)|=48$, with a unique exception for $\lambda=0$. In the exceptional case, $\aut(\cX)\cong SmallGroup(192,181)$, and hence the center of $\aut(\cX)$ has an elementary abelian subgroup $C_2\times C_2$, so that $\aut(\cX)$ is a central (non-split) extension of $\GL(2,3)$ by $C_2\times C_2$.
\end{proof}
\begin{rem}
\label{rem2nov21}{\em{We point out a connection between $\cC_0$, i.e. the irreducible plane sextic of affine equation $(X+Y)^4-2((XY)^3+XY)=0,$ and the well known genus $5$ complex curve $W_{192}$ of Wiman  of
affine equation $Y^4-X^2(X^4-1)=0$, see \cite{Wi,hk,kato}. The notoriety of $W_{192}$ stems from its property of attaining the maximum number of automorphisms that a genus $5$ curve can possess.
Actually, in characteristic $3$, the above equation still defines a nonsingular curve of genus $5$ whose automorphism group is isomorphic to that of $W_{192}$. A MAGMA computation shows that this curve is isomorphic to $\cC_0$ over $\mathbb{F}_{81}$. It should be stressed however that the classical approach used in \cite{Wi}, as well as its variants  and other approaches also depending on tools from modern algebraic geometry \cite{bcg,centina,hk,kato,kato1,km}, do not seem plausible to extend straightforwardly to our case mostly due to the fact that $\aut(\cC_0)$ is non-tame in characteristic $3$, and hence the different in the Hurwitz Riemann formula changes passing from characteristic zero to characteristic $3$. It would be interesting to study the existence of some more genus $5$ curves in characteristic $3$ with $192$ automorphisms.
}}
\end{rem}

\end{document}